\documentclass{article}
\usepackage[a4paper,hmargin={2.54cm,2.54cm},vmargin={3.17cm,3.17cm}]{geometry}

\usepackage{amsmath,amssymb,amsthm,mathtools}
\numberwithin{equation}{section}

\usepackage{algorithm,algpseudocode,appendix,bm,caption,enumitem,fancyhdr,graphicx,multirow,siunitx,subcaption,xpatch}

\usepackage{tikz}
\usetikzlibrary{arrows, positioning, calc}
\tikzstyle{arrow} = [thick,->,>=stealth]

\setlist[enumerate]{label={(\arabic*)}, leftmargin=*}

\usepackage{blkarray,nicematrix}

\newtheorem{proposition}{Proposition}[section]
\newtheorem{theorem}{Theorem}[section]
\newtheorem{definition}{Definition}[section]
\newtheorem{lemma}{Lemma}[section]
\newtheorem{corollary}{Corollary}[section]
\theoremstyle{remark}
\newtheorem{remark}{Remark}[section]

\usepackage[numbers]{natbib}
\usepackage[hidelinks]{hyperref} 
\usepackage{doi}

\newenvironment{keywords}{\medskip\textbf{Keywords:}}{}
\newenvironment{AMS}{\medskip\textbf{AMS subject classifications (2020).}}{}

\newcommand{\macheps}{\boldsymbol{u}}

\newcommand{\wt}{\tilde}
\newcommand{\wh}{\hat}
\newcommand{\tightcolon}{\mathord{:}}
\newcommand{\sffl}{\textup{fl}}
\newcommand{\sfgivens}{\textup{rot}}
\newcommand{\mygiv}[2]{{\sfgivens}(#1,#2)} 
\newcommand{\sfvec}{\textup{vec}}
\newcommand{\tsqrt}[1]{\textstyle{\sqrt{#1}}}
\newcommand{\mymat}[4][]{{#2}_{#3,#4}^{#1}}
\newcommand{\trans}{^\top}
\newcommand{\dspace}{\qquad}
\newcommand{\calK}{{K}}
\newcommand{\tol}{\lVert\mymat{\check{A}}{2}{1}\rVert_{\max}}
\newcommand{\forbackerr}[3]{e(#1,#2,#3)}

\DeclareMathOperator{\diag}{diag}

\DeclareMathOperator{\trace}{trace}

\algnewcommand{\bfInput}{\item[\textbf{Input:}]}
\algnewcommand{\bfOutput}{\item[\textbf{Output:}]}
\renewcommand{\doi}[1]{\href{https://doi.org/#1}{\texttt{doi:#1}}}

\begin{document}

\title{A Modified Direct Eigenvalue Swapping Algorithm with Guaranteed Stability}
\author{Meiyue Shao\thanks{School of Data Science and Shanghai Key Laboratory for Contemporary Applied Mathematics, Fudan University, Shanghai 200433, China (\texttt{myshao@fudan.edu.cn}).}
\and Rentao Xu\thanks{School of Mathematical Sciences, Fudan University, Shanghai 200433, China (\texttt{rtxu24@m.fudan.edu.cn}).}}
\date{}

\maketitle

\begin{abstract} 
For a given matrix \( A \) in the real Schur form, the eigenvalue swapping algorithm is used to swap two adjacent diagonal blocks. In LAPACK, this operation is performed by the subroutine~\texttt{SLAEXC}/\texttt{DLAEXC}, which is based on a direct swapping algorithm proposed by Bai and Demmel in 1993. In this work, we present a thorough error analysis of a modified direct swapping algorithm. By a carefully designed pivoted Givens QR factorization, this algorithm can achieve an error bound of the form \( O( \macheps \lVert A \rVert ) \) under mild assumptions, where \( \macheps \) is the unit roundoff. We present empirical evidence to illustrate the superior stability of our algorithm over existing approaches.

\medskip

\begin{keywords}
Eigenvalues, real Schur form, eigenvalue reordering, rounding error analysis
\end{keywords}

\begin{AMS}
65F15, 65F25, 65G50
\end{AMS}

\end{abstract}

\section{Introduction}
Given a matrix \( A \) in the real Schur form:
\[
A = \begin{bmatrix}
    \mymat{A}{1}{1} & \mymat{A}{1}{2} & \dots & \mymat{A}{1}{k} \\
     & \mymat{A}{2}{2} & \dots & \mymat{A}{2}{k} \\ 
     &  & \ddots & \vdots \\
     &  &  & \mymat{A}{k}{k}
\end{bmatrix} ,
\] 
where each diagonal block \( \mymat{A}{i}{i} \) is either \( 1 \times 1 \) or \( 2 \times 2 \). In particular, for each \( 2 \times 2 \) block, the real Schur form indicates that its diagonal entries are equal and the off-diagonal entries have opposite signs. The eigenvalue reordering aims to update \( A \) by an orthogonal matrix \( Q \) such that \( Q\trans AQ \) maintains the real Schur form while the diagonal blocks are rearranged in a desired order. This operation serves as a vital ingredient in certain eigenvalue algorithms that employ a restarting technique~\cite{Fokkema98,Stewart02,Stewart02_2}.  

In general, the eigenvalue reordering is achieved by a sequence of adjacent swaps. Thus, it is enough to consider the swapping problem for \( A \) with two diagonal blocks. Then \( A \) could be \( 2 \times 2 \), \( 3 \times 3 \) or \( 4 \times 4 \), depending on the orders of \( \mymat{A}{1}{1} \) and \( \mymat{A}{2}{2} \). In this paper, we restrict our attention to the swapping problem that involves two \( 2 \times 2 \) blocks, whose numerical stability poses the greatest challenge. For a stable swapping algorithm that handles \( 2 \times 2 \) and \( 3 \times 3 \) cases, see~\cite{Sai2022}. In what follows, the notation \( \mathcal{T}_{4} \) is used to denote the set of block upper triangular matrices, partitioned as 
\begin{align*} 
    \begin{bmatrix}
     \mymat{A}{1}{1} & \mymat{A}{1}{2} \\
            & \mymat{A}{2}{2}
    \end{bmatrix}, \dspace \mymat{A}{i}{j} \in \mathbb{R}^{2 \times 2} .
\end{align*}
Moreover, \( \mathcal{S}_{4} \) denotes the subset of \( \mathcal{T}_{4} \) consisting of matrices in the real Schur form. For \( A \in \mathcal{T}_{4} \) or \( \mathcal{S}_{4} \), the shorthand ``\( Q \) swaps \( A \)'' indicates that \( Q\trans A Q \) remains block upper triangular (not necessarily in the real Schur form), with the eigenvalues of \(  \mymat{A}{1}{1} \) and \( \mymat{A}{2}{2} \) swapped. 

Early studies on the eigenvalue reordering can be found in \cite{Stewart76,Dooren81,Dooren82}. In these works, iterative algorithms are proposed to address the swapping problem. For the standard eigenvalue problem, the shifted QR iteration is employed to swap \( 1 \times 1 \) and \( 2 \times 2 \) diagonal blocks. As for the generalized eigenvalue problem, the shifted QZ iteration is used. Although the swapping operation is typically accomplished within a few iterations, these algorithms may still fail for some ill-conditioned problems.

Unlike the iterative algorithms, the direct swapping method forms a swapping orthogonal matrix directly from the QR factorization of a specified matrix. This method is motivated by the following theorem proposed by Ng and Parlett. 
\begin{theorem}[Ng and Parlett~\cite{Parlett88}] \label{theo: iff condition for reordering}
    Suppose \( A \in \mathcal{T}_{4} \). Let \( X \) be a solution to the \textup{Sylvester} equation \( \mymat{A}{1}{1}X - X\mymat{A}{2}{2} = \mymat{A}{1}{2} \). Then an orthogonal matrix \( Q \in \mathbb{R}^{4 \times 4} \) swaps \( A \) if and only if 
    \[
    Q\trans
    \begin{bmatrix}
        -X  \\
        I_{2}
    \end{bmatrix} = 
    \begin{bmatrix}
        Z \\
        0
    \end{bmatrix} 
    \]
    for some nonsingular matrix \( Z \). 
\end{theorem}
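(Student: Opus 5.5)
The plan is to block-diagonalize \( A \) with the Sylvester solution \( X \) and read off the invariant subspaces. I will also assume, as the statement implicitly requires for ``swapping'' to be meaningful, that \( \mymat{A}{1}{1} \) and \( \mymat{A}{2}{2} \) have no common eigenvalue, so that \( X \) is unique. Put
\[
S = \begin{bmatrix} I_2 & -X \\ 0 & I_2 \end{bmatrix}.
\]
A direct computation using \( \mymat{A}{1}{1}X - X\mymat{A}{2}{2} = \mymat{A}{1}{2} \) gives
\[
AS = S\begin{bmatrix} \mymat{A}{1}{1} & 0 \\ 0 & \mymat{A}{2}{2} \end{bmatrix}.
\]
In particular,
\[
A\begin{bmatrix} -X \\ I_2 \end{bmatrix} = \begin{bmatrix} -X \\ I_2 \end{bmatrix}\mymat{A}{2}{2}.
\]
Hence the column space \( \mathcal{V} \) of \( V = [-X;\, I_2] \) is a two-dimensional invariant subspace of \( A \), and \( A \) restricted to \( \mathcal{V} \) has the eigenvalues of \( \mymat{A}{2}{2} \). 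Because the spectra are disjoint, \( \mathcal{V} \) is exactly the spectral (generalized eigenspace) invariant subspace for \( \lambda(\mymat{A}{2}{2}) \). It is the unique two-dimensional \( A \)-invariant subspace on which \( A \) has those eigenvalues.

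\textbf{Sufficiency.} Suppose \( Q\trans V = [Z;\,0] \) with \( Z \) nonsingular. Then the first two columns \( Q_1 \) of \( Q \) satisfy \( V = Q_1 Z \), so \( \operatorname{span}(Q_1) = \mathcal{V} \) is \( A \)-invariant. Therefore \( AQ_1 = Q_1 B \) with \( B = Z\mymat{A}{2}{2}Z^{-1} \).

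It follows that \( Q\trans A Q \) has zero \( (2,1) \) block and \( (1,1) \) block \( B \), which is similar to \( \mymat{A}{2}{2} \). The \( (2,2) \) block then carries the remaining eigenvalues, namely those of \( \mymat{A}{1}{1} \), counted with multiplicity via the characteristic polynomial. Thus \( Q \) swaps \( A \).

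\textbf{Necessity.} Suppose \( Q \) swaps \( A \), so \( Q\trans A Q \) is block upper triangular with \( (1,1) \) block having the eigenvalues of \( \mymat{A}{2}{2} \). Then \( \operatorname{span}(Q_1) \) is a two-dimensional invariant subspace with spectrum \( \lambda(\mymat{A}{2}{2}) \). By the uniqueness noted above it equals \( \mathcal{V} \), so \( V = Q_1 Z \) for some \( Z \), which is nonsingular since \( V \) has full column rank. Since \( Q \) is orthogonal, \( Q\trans Q_1 = [I_2;\,0] \), and hence \( Q\trans V = [Z;\,0] \).

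The main obstacle is the uniqueness claim. I would justify it as follows. Any invariant subspace decomposes as the direct sum of its intersections with the generalized eigenspaces of \( A \). If the restriction of \( A \) has only eigenvalues in \( \lambda(\mymat{A}{2}{2}) \), the subspace lies in the spectral subspace for \( \lambda(\mymat{A}{2}{2}) \), which has dimension \( 2 \). Equality of dimensions then gives equality of the subspaces. This is exactly where the disjoint-spectra assumption is needed, together with interpreting ``swapped'' as equality of spectra with multiplicity.
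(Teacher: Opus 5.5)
Your proof is correct. The paper gives no proof of its own for this statement; it is quoted from Ng and Parlett. Your argument is the standard one: block-diagonalize with $S$, identify the column space of $[-X\trans, I_{2}]\trans$ as the spectral invariant subspace for $\lambda(A_{22})$, and use its uniqueness.

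Two remarks are worth recording.

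First, the disjoint-spectra hypothesis you add is not a mere convenience. Without it the ``only if'' direction is false as worded. Take $A = \diag(M, M)$ and $X = 0$, which is a valid solution since $A_{12} = 0$. Then $Q = I_{4}$ trivially swaps $A$, yet $Q\trans [-X\trans, I_{2}]\trans = [0, I_{2}]\trans$. Your sufficiency argument, by contrast, uses only $A[-X\trans, I_{2}]\trans = [-X\trans, I_{2}]\trans A_{22}$, so it holds for any solution $X$. That is the only direction the paper actually invokes: in Section~\ref{sec: strategy for the error analysis}, (\ref{eq: sufficient condition for swapping}) is exactly this identity for $A + \Delta A$ and $\wt{X}$.

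Second, the $2 \times 2$ blocks carry complex conjugate pairs. Your decomposition of an invariant subspace into its intersections with generalized eigenspaces therefore tacitly works in $\mathbb{C}^{4}$. A purely real version goes as follows. Let $p$ be the characteristic polynomial of $A_{22}$. Then $p(A) = S \diag(p(A_{11}), 0) S^{-1}$ with $p(A_{11})$ nonsingular, so $\ker p(A)$ is precisely the column space of $[-X\trans, I_{2}]\trans$. On the other hand, $p(A) Q_{1} = Q_{1}\, p\bigl((Q\trans A Q)_{11}\bigr) = 0$ by Cayley--Hamilton, and a dimension count finishes.
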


In principle, the direct swapping method consists of two main steps: solving the Sylvester equation \( \mymat{A}{1}{1}X - X\mymat{A}{2}{2} = \mymat{A}{1}{2} \) and computing an orthogonal matrix whose first two columns form a basis for the column space of \( [-X\trans, I_{2}]\trans \). For a more detailed treatment, we refer to \cite{Dongarra92,Parlett88}. Nevertheless, the stability of these algorithms is not satisfactory when applied to some ill-conditioned problems. Thus a numerically stable algorithm remains the primary concern. To this end, Bai and Demmel~\cite{Bai1993} developed an improved direct swapping algorithm. It appears as Algorithm~\ref{alg: direct swapping method}. Note that when there is no risk of overflow, \( \gamma \) is simply taken as \( 1 \). 

Neglecting the error in the QR factorization, Bai and Demmel provided an error analysis for their algorithm.  
\begin{theorem}[Bai and Demmel~\cite{Bai1993}] \label{theo: Bai and Demmel error bound}
    Suppose \( A \in \mathcal{S}_{4} \). Let \( \wh{X} \) be the computed \( X \) in Algorithm~\ref{alg: direct swapping method} and \( R = \mymat{A}{1}{1}\wh{X} - \wh{X}\mymat{A}{2}{2} - \mymat{A}{1}{2} \) be the residual. Assume the computation of \textup{QR} factorization is exact, which results in an orthogonal matrix \( \wh{Q} \). Then 
    \[
    \check{A} =  
    \wh{Q}\trans A\wh{Q} = \begin{bmatrix}
        \mymat{\check{A}}{1}{1} & \mymat{\check{A}}{1}{2} \\
        \mymat{\check{A}}{2}{1} & \mymat{\check{A}}{2}{2}
    \end{bmatrix} , \dspace \mymat{\check{A}}{i}{j} \in \mathbb{R}^{2 \times 2} ,
    \]
    where 
    \[
    \lVert \mymat{\check{A}}{2}{1} \rVert \le \frac{ \lVert R \rVert_{\mathrm{F}} }{ 1 + ( \sigma_{2}( \wh{X} ) )^{2} } .
    \]
    Here \( \lVert \mymat{\check{A}}{2}{1} \rVert \) is the \( 2 \)-norm of \( \mymat{\check{A}}{2}{1} \), \( \lVert R \rVert_{\mathrm{F}} \) is the Frobenius norm of \( R \) and \( \sigma_{2}(\wh{X}) \) is the second largest singular value of \( \wh{X} \). 
\end{theorem}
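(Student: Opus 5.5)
The plan is to write the exact QR factorization as
\[
\begin{bmatrix} -\wh{X} \\ I_2 \end{bmatrix} = \wh{Q}\begin{bmatrix} T \\ 0 \end{bmatrix},
\]
where \( T \) is a \( 2\times 2 \) upper triangular matrix. The scaling by \( \gamma \) does not change the column space, so we take \( \gamma = 1 \). Let \( Y = [-\wh{X}\trans, I_2]\trans \). We then compute \( AY \) directly. Using \( \mymat{A}{1}{1}\wh{X} - \wh{X}\mymat{A}{2}{2} - \mymat{A}{1}{2} = R \), we get
\[
AY = \begin{bmatrix} -\mymat{A}{1}{1}\wh{X} + \mymat{A}{1}{2} \\ \mymat{A}{2}{2} \end{bmatrix}
= \begin{bmatrix} -\wh{X}\mymat{A}{2}{2} - R \\ \mymat{A}{2}{2} \end{bmatrix}
= Y\mymat{A}{2}{2} - \begin{bmatrix} R \\ 0 \end{bmatrix}.
\]
This identity is the heart of the argument: the range of \( Y \) is invariant up to the residual \( R \).

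Next we multiply this identity on the left by \( \wh{Q}\trans \) and on the right by \( T^{-1} \). The matrix \( T \) is nonsingular because \( Y \) has full column rank (its bottom block is \( I_2 \)). Since \( \wh{Q}\trans Y = [T\trans, 0]\trans \), we obtain
\[
\wh{Q}\trans A \wh{Q}\begin{bmatrix} I_2 \\ 0 \end{bmatrix} = \begin{bmatrix} T\mymat{A}{2}{2}T^{-1} \\ 0 \end{bmatrix} - \wh{Q}\trans\begin{bmatrix} R \\ 0 \end{bmatrix} T^{-1}.
\]
Reading off the bottom block row gives
\[
\mymat{\check{A}}{2}{1} = -\bigl[\wh{Q}\trans\bigr]_{2,1} R\, T^{-1},
\]
where \( [\wh{Q}\trans]_{2,1} \) denotes the \( (2,1) \) block of \( \wh{Q}\trans \), i.e.\ \( (\wh{Q}_{1,2})\trans \). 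Using \( \lVert [\wh{Q}\trans]_{2,1}\rVert \le 1 \), we get
\[
\lVert \mymat{\check{A}}{2}{1}\rVert \le \lVert R\rVert\,\lVert T^{-1}\rVert \le \lVert R\rVert_{\mathrm F}\,\lVert T^{-1}\rVert.
\]

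It remains to bound \( \lVert T^{-1}\rVert \). Since \( T\trans T = Y\trans Y = I + \wh{X}\trans\wh{X} \), we have
\[
\sigma_{\min}(T)^2 = 1 + \sigma_2(\wh{X})^2,
\]
so \( \lVert T^{-1}\rVert = (1+\sigma_2(\wh{X})^2)^{-1/2} \). This yields
\[
\lVert \mymat{\check{A}}{2}{1}\rVert \le \frac{\lVert R\rVert_{\mathrm F}}{\sqrt{1+\sigma_2(\wh{X})^2}},
\]
which has only a square root in the denominator.

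The main obstacle is recovering the full power \( 1+\sigma_2^2 \) in the denominator. For that we need an additional factor \( \lVert [\wh{Q}\trans]_{2,1}\rVert \le (1+\sigma_2(\wh{X})^2)^{-1/2} \). We get it by exploiting orthogonality of \( \wh{Q} \): its first two columns \( YT^{-1} \) have bottom block \( T^{-1} \). Hence the bottom-left block \( \wh{Q}_{2,1} \) equals \( T^{-1} \). The CS decomposition of the orthogonal matrix \( \wh{Q} \) says that \( \wh{Q}_{2,1} \) and \( \wh{Q}_{1,2} \) have the same singular values. Therefore
\[
\lVert \wh{Q}_{1,2}\rVert = \lVert T^{-1}\rVert = (1+\sigma_2(\wh{X})^2)^{-1/2}.
\]
Combining the two factors of \( (1+\sigma_2^2)^{-1/2} \) gives the stated bound. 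I expect the CS-decomposition step to be the point that needs the most care. In particular, I need to check that the relevant block is \( \wh{Q}_{1,2}\trans \), i.e.\ that the \( (2,1) \) block of \( \wh{Q}\trans \) is \( \wh{Q}_{1,2}\trans \), and not \( \wh{Q}_{2,1} \) itself. The norms coincide in either case, but the indexing should be stated correctly in the write-up.
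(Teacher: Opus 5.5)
Your proof is correct. It actually gives the slightly sharper bound \( \lVert \mymat{\check{A}}{2}{1} \rVert \le \lVert R \rVert / ( 1 + \sigma_{2}^{2}( \wh{X} ) ) \), with the spectral norm of \( R \). The paper does not prove this theorem itself: it quotes it from Bai and Demmel as background, so there is no in-paper argument to compare against. Your route is the natural one: the residual identity \( AY = Y\mymat{A}{2}{2} - [R\trans, 0]\trans \) gives \( \mymat{\check{A}}{2}{1} = -\mymat[\top]{\wh{Q}}{1}{2} R\, T^{-1} \), and then you use \( \lVert T^{-1} \rVert = \lVert \mymat{\wh{Q}}{1}{2} \rVert = ( 1 + \sigma_{2}^{2}( \wh{X} ) )^{-1/2} \). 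Your indexing is right: the \( (2,1) \) block of \( \wh{Q}\trans \) is \( \mymat[\top]{\wh{Q}}{1}{2} \).

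The CS-decomposition step can be made more elementary. The last two columns of \( \wh{Q} \) are orthogonal to \( Y \), so \( \mymat{\wh{Q}}{2}{2} = \wh{X}\trans \mymat{\wh{Q}}{1}{2} \). Their orthonormality then reads \( \mymat[\top]{\wh{Q}}{1}{2} ( I_{2} + \wh{X}\wh{X}\trans ) \mymat{\wh{Q}}{1}{2} = I_{2} \), which directly gives \( \lVert \mymat{\wh{Q}}{1}{2} \rVert \le ( 1 + \sigma_{2}^{2}( \wh{X} ) )^{-1/2} \). Alternatively, combine \( \mymat[\top]{\wh{Q}}{1}{1}\mymat{\wh{Q}}{1}{1} + \mymat[\top]{\wh{Q}}{2}{1}\mymat{\wh{Q}}{2}{1} = I_{2} = \mymat{\wh{Q}}{1}{1}\mymat[\top]{\wh{Q}}{1}{1} + \mymat{\wh{Q}}{1}{2}\mymat[\top]{\wh{Q}}{1}{2} \) with the fact that \( \mymat[\top]{\wh{Q}}{1}{1}\mymat{\wh{Q}}{1}{1} \) and \( \mymat{\wh{Q}}{1}{1}\mymat[\top]{\wh{Q}}{1}{1} \) have the same eigenvalues.

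One cosmetic point: your \( T \) collides with the paper's \( T = I_{2} \otimes \mymat{A}{1}{1} - \mymat[\top]{A}{2}{2} \otimes I_{2} \). Call the triangular factor \( Z \), as in Algorithm~\ref{alg: direct swapping method}.
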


Indeed, \( \mymat{\check{A}}{2}{1} \) is used for the stability test in Algorithm~\ref{alg: direct swapping method}. Due to the excellent numerical stability in practice, Algorithm~\ref{alg: direct swapping method} has been adopted as the routine for eigenvalue swapping in LAPACK, namely, \texttt{DLAEXC}. In recent years, two adaptations of Algorithm~\ref{alg: direct swapping method} have been developed to further reduce \( \mymat{\check{A}}{2}{1} \) and enhance the stability~\cite{Camps19}. 

Despite the practical reliability observed for these algorithms, a complete rounding error analysis is largely absent from the literature. Experiments by Bai and Demmel suggest the bound in Theorem~\ref{theo: Bai and Demmel error bound} is generally pessimistic, even without incorporating the error in QR factorization into their analysis. It is not fully understood under what conditions these algorithms are guaranteed to be numerically stable. Therefore, developing a stable eigenvalue swapping algorithm with theoretical guarantees remains a worthwhile task. In this work, we seek to tackle this problem by introducing a new QR factorization strategy. The primary contributions of this paper are summarized as follows.
\begin{itemize}
    \item We modify the QR factorization in Algorithm~\ref{alg: direct swapping method} by replacing the Householder QR with a new strategy based on Givens rotations. 

    \item A mixed forward--backward error is introduced for eigenvalue reordering. Under this measure, a comprehensive error analysis indicates that the modified algorithm has provable stability under mild assumptions. 

    \item We provide an example showing that the modified algorithm can be significantly more stable than eigenvalue reordering algorithms in~\cite{Bai1993,Camps19}. 
\end{itemize}

The rest of this paper is organized as follows. Section~\ref{sec: preliminaries} is devoted to the notation and preliminary results including the accuracy of the Gaussian elimination with complete pivoting (GECP) and Givens rotations. Section~\ref{sec: pivoted Givens QR} presents a new strategy based on Givens rotations for the QR factorization in eigenvalue reordering algorithms. This leads to the proposal of Algorithm~\ref{alg: modified direct swapping method}, a modification of Algorithm~\ref{alg: direct swapping method}, accompanied by the result of its error analysis in Section~\ref{sec: main result}. Numerical experiments which compare the stability of Algorithm~\ref{alg: modified direct swapping method} with eigenvalue reordering algorithms in~\cite{Bai1993,Camps19} are demonstrated in Section~\ref{sec: numerical experiments}. Section~\ref{sec: strategy for the error analysis} outlines the proof of the error analysis result, formulating the overall error by attributing it to solving a linear system and QR factorization. In Sections~\ref{sec: contribution to error of solving linear system} and~\ref{sec: contribution to error from QR factorization}, we bound the error from solving the linear system and from our QR factorization strategy, respectively. In Section~\ref{sec: conclusion}, we conclude this paper with suggestions for future work. Detailed proofs of some propositions and lemmas are presented in Appendix~\ref{appendix}. 

\begin{algorithm}[tb]
    \caption{Direct swapping method}
    \label{alg: direct swapping method}
    \begin{algorithmic}[1] 
    \bfInput A matrix \( A \in \mathcal{S}_{4} \). 
    \State Let \( T = I_{2} \otimes \mymat{A}{1}{1} - \mymat[\top]{A}{2}{2} \otimes I_{2} \), \( b = \sfvec( \mymat{A}{1}{2} ) \). Solve the linear system \( Tx = \gamma b \) by GECP. Take \( X = \sfvec^{-1}( x ) \).
    \State Compute the QR factorization of \( H = [-X\trans, \gamma I_{2}]\trans \) by Householder reflections such that 
    \[
    \begin{bmatrix}
        -X \\
        \gamma I_{2}
    \end{bmatrix} = Q
    \begin{bmatrix}
        Z \\
        0
    \end{bmatrix} .
    \]
    \State Stability test. If the swap is accepted, compute \( Q\trans AQ \) and reduce it to the real Schur form. 
    \bfOutput \( Q\trans AQ \in \mathcal{S}_{4} \).
    \end{algorithmic}
\end{algorithm}

\section{Preliminaries} \label{sec: preliminaries}
In this section, we first introduce some notation from numerical analysis. Next we present several lemmas that will be used in the subsequent analysis.

\subsection{Notation}
Let \( \macheps \) be the unit roundoff. We write \( \gamma_{k} \) to denote the quantity which satisfies
\[
\gamma_{k} =  ( 1 - \macheps )^{-k} - 1 .
\]
In particular, we always assume that \( \macheps \) is small enough to ensure \( \gamma_{k} \ll 1 \) holds for constants \( k > 0 \) that appear in this paper. This assumption is not critical but avoids some unnecessary arguments. The notation \( \sffl( \cdot ) \) is used to identify the computation in floating point arithmetic. Throughout this paper, we follow the standard model of floating point arithmetic:
\[
\sffl( x~\textup{op}~y ) 
= ( x~\textup{op}~y ) ( 1 + \delta ), \dspace 
\lvert \delta \rvert 
\le \macheps, \dspace 
\textup{op} = +, -, *, /.
\]
For two real numbers \( a \) and \( b \), denote 
\[
a \wedge b = \min\{ a, b \}, \dspace   
a \vee b = \max \{ a, b \} . 
\]

The operation \( \sfvec( \cdot ) \) maps a matrix into a column vector by concatenating its columns from left to right while \( \sfvec^{-1}( \cdot ) \) maps a vector of dimension \( 4 \) back into a \( 2 \times 2 \) matrix. For a vector \( v = [v_{1}, v_{2}]\trans \in \mathbb{R}^{2} \), the notation \( G = \mygiv{v_{1}}{v_{2}} \) refers to the Givens rotation such that 
\begin{align} \label{structure of givens rotation}
G
= \begin{bmatrix}
      c & s \\
      -s & c
  \end{bmatrix}, \dspace 
c = \frac{ v_{1} }{ \sqrt{ v_{1}^{2} + v_{2}^{2} } }, \dspace 
s = \frac{ -v_{2} }{ \sqrt{ v_{1}^{2} + v_{2}^{2} } } . 
\end{align} 
Then \( G\trans v \) zeros the second entry of \( v \). 

The \( 2 \)-norm and the \( \infty \)-norm of a vector or matrix are denoted by \( \lVert \cdot \rVert \) and \( \lVert \cdot \rVert_{\infty} \), respectively. For a matrix \( M \), we write \( \lVert M \rVert_{\max} \) for \( \lVert \sfvec( M ) \rVert_{\infty} \). The Frobenius norm of \( M \) is \( \lVert M \rVert_{\mathrm{F}} \). The condition number \( \kappa( M ) = \lVert M \rVert \lVert M^{-1} \rVert \) with the convention \( \kappa( M ) = \infty \) if \( M \) is singular. The \( i \)th largest singular value of \( M \) is denoted by \( \sigma_{i}( M ) \). Moreover, for \( M \in \mathbb{R}^{2 \times 2} \), we introduce a quantity 
\[
\calK( M ) = \frac{\sigma_{1}( M )}{\sqrt{ 1 + 2\sigma_{2}^{2}( M ) }} .
\]
It is evident that \( \calK( M ) \le \sigma_{1}( M ) \wedge \kappa( M ) \). This quantity will be used frequently in subsequent error analysis. 

\subsection{Auxiliary lemmas}
We begin with a classical result~\cite[Theorem 9.4]{hig02} on the componentwise backward stability of GECP. Note that the original theorem in~\cite{hig02} is stated without pivoting, but as observed in~\cite[pp.~163]{hig02}, it extends naturally to complete pivoting.
For a matrix \( T \in \mathbb{R}^{n \times n} \), let \( PTQ\trans = LU \) be its LU factorization with complete pivoting, where \( L \), \( U \) are LU factors and \( P \), \( Q \) are permutations.

\begin{lemma}[{\cite[Theorem 9.4]{hig02}}] \label{lemma: stability of GECP}
    Suppose \( T \in \mathbb{R}^{n \times n} \) and \textup{GECP} is used to solve the linear system \( Tx = b \). Then the computed \( \wh{L} \), \( \wh{U} \), \( \wh{P} \), \( \wh{Q} \) and the computed solution \( \wh{x} \) satisfy 
    \[
    ( T + \Delta T ) \wh{x} 
    = b, \dspace 
    \lvert \Delta T \rvert 
    \le \gamma_{3n} \wh{P}\trans \lvert \wh{L} \rvert \lvert \wh{U} \rvert \wh{Q} .
    \]
\end{lemma}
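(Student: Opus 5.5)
The plan is to follow the classical argument for Gaussian elimination without pivoting (Higham, Theorems~9.3--9.4), applied to the permuted matrix \( \wh{P} T \wh{Q}\trans \). Once the pivoting decisions are made, the computed permutations \( \wh{P} \) and \( \wh{Q} \) are exact, because permuting rows and columns involves no arithmetic. Thus GECP in floating point is exactly plain Gaussian elimination, in floating point, applied to \( \wt{T} = \wh{P} T \wh{Q}\trans \). The stated bound should therefore follow from the no-pivoting result by undoing the permutations.

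The first step is the factorization. I would show by induction over the elimination stages that \( \wh{L}\wh{U} = \wt{T} + \Delta \wt{T}_{1} \) with \( \lvert \Delta \wt{T}_{1} \rvert \le \gamma_{n} \lvert \wh{L} \rvert \lvert \wh{U} \rvert \). For each entry, \( \wh{u}_{kj} \) and \( \wh{l}_{ik} \) arise from an inner-product recurrence of the form \( t_{ij} - \sum_{k} l_{ik} u_{kj} \), with a division in the case of \( l_{ik} \). The standard lemma on such recurrences (Higham, Lemma~8.4) gives \( \lvert t_{ij} - \sum_{k} \wh{l}_{ik}\wh{u}_{kj} \rvert \le \gamma_{n} \sum_{k} \lvert \wh{l}_{ik} \rvert \lvert \wh{u}_{kj} \rvert \).

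The second step is the two triangular solves. We solve \( \wh{L} y = \wh{P} b \) and then \( \wh{U} z = y \), and set \( \wh{x} = \wh{Q}\trans z \). Each substitution is componentwise backward stable: \( (\wh{L} + \Delta L)\wh{y} = \wh{P}b \) with \( \lvert \Delta L \rvert \le \gamma_{n} \lvert \wh{L} \rvert \), and similarly for \( \wh{U} \).

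Combining these, \( (\wh{L}+\Delta L)(\wh{U}+\Delta U) z = \wt{T} z + (\Delta \wt{T}_{1} + \Delta L\, \wh{U} + \wh{L}\, \Delta U + \Delta L\, \Delta U) z \). The total perturbation is bounded by \( (\gamma_{n} + 2\gamma_{n} + \gamma_{n}^{2}) \lvert \wh{L} \rvert \lvert \wh{U} \rvert \le \gamma_{3n} \lvert \wh{L} \rvert \lvert \wh{U} \rvert \). This uses the inequality \( 3\gamma_{n} + \gamma_{n}^{2} \le \gamma_{3n} \), which follows from the definition \( \gamma_{k} = (1-\macheps)^{-k} - 1 \), since \( (1+\gamma_{n})^{3} = 1+\gamma_{3n} \).

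Finally I would undo the permutations. Set \( \Delta T = \wh{P}\trans \Delta \wt{T} \wh{Q} \). Because permutation matrices commute with taking absolute values, \( \lvert \Delta T \rvert = \wh{P}\trans \lvert \Delta \wt{T} \rvert \wh{Q} \le \gamma_{3n} \wh{P}\trans \lvert \wh{L} \rvert \lvert \wh{U} \rvert \wh{Q} \). In addition, \( (\wt{T} + \Delta\wt{T}) z = \wh{P} b \) becomes \( (T + \Delta T)\wh{x} = b \).

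The main obstacle is conceptual rather than computational. The pivot choices depend on computed data, so they must be treated as fixed after the fact; the bound does not need the pivots to match those of exact GECP. The other point needing care is the bookkeeping of the \( \gamma \) constants, to confirm that they combine into \( \gamma_{3n} \).
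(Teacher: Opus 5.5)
Your proposal is correct and is essentially the paper's argument. The paper simply invokes Higham's Theorem~9.4, which is the LU backward error $\gamma_{n}\lvert\wh{L}\rvert\lvert\wh{U}\rvert$ plus two componentwise backward stable triangular solves, combined via $3\gamma_{n}+\gamma_{n}^{2}\le\gamma_{3n}$; following Higham's remark on p.~163, it extends this to complete pivoting because floating-point GECP coincides with unpivoted elimination on $\wh{P}T\wh{Q}\trans$, which is exactly the reduction and permutation-undoing step you carry out (and your bookkeeping with $\gamma_{k}=(1-\macheps)^{-k}-1$, where $(1+\gamma_{n})^{3}=1+\gamma_{3n}$, is fine).
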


Let \( r = T\wh{x} - b = -\Delta T \cdot \wh{x} \) be the residual. Lemma~\ref{lemma: stability of GECP} leads to helpful consequences~\cite[Theorem 9.5]{hig02}
\begin{align} \label{componentwise bound for residual} 
    \textup{(componentwise)} \quad &\lvert r \rvert 
    \le \gamma_{3n} \wh{P}\trans \lvert \wh{L} \rvert \lvert \wh{U} \rvert \wh{Q} \lvert \wh{x} \rvert, \\ 
    \label{infinite norm bound for residual}
    \textup{(normwise)} \quad &\lVert r \rVert_{\infty} 
    \le n^{2} \rho_{n} \gamma_{3 n} \lVert T \rVert_{\infty} \lVert \wh{x} \rVert_{\infty} ,
\end{align} 
where \( \rho_{n} \) is the growth factor in Gaussian elimination. In particular, it is known that \( \rho_{4} = 4 \) for complete pivoting~\cite{Cryer1968,Wilkinson61}. 

The next lemma provides the norm conversions needed in later analysis. It is a straightforward application of matrix norm properties~\cite[Section 2.3.2]{Golub13}. 
\begin{lemma} \label{lemma: norm inequalities}
    Suppose \( A \in \mathcal{S}_{4} \). Let \( T = I_{2} \otimes \mymat{A}{1}{1} - \mymat[\top]{A}{2}{2} \otimes I_{2} \). Then
    \begin{align*} 
        \lVert T \rVert_{\infty} 
        \le 2\sqrt{2} \, \lVert A \rVert, \dspace
        \lVert T \rVert  
        \le 2 \lVert A \rVert, \dspace
        \lVert T \rVert 
        \le \sqrt{2} \, \lVert A \rVert_{\mathrm{F}}, 
    \end{align*} 
\end{lemma}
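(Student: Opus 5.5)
We need to prove three inequalities: \( \lVert T \rVert_{\infty} \le 2\sqrt{2}\lVert A \rVert \), \( \lVert T \rVert \le 2\lVert A \rVert \), and \( \lVert T \rVert \le \sqrt{2}\lVert A \rVert_{\mathrm{F}} \).

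The plan is to control \( T = I_{2} \otimes \mymat{A}{1}{1} - \mymat[\top]{A}{2}{2} \otimes I_{2} \) through norms of its two Kronecker summands. Each summand is then compared with \( \lVert A \rVert \) using the fact that \( \mymat{A}{1}{1} \) and \( \mymat{A}{2}{2} \) are submatrices of \( A \). The basic facts needed are \( \lVert M \otimes N \rVert = \lVert M \rVert \lVert N \rVert \) for the \( 2 \)-norm, \( \lVert M\trans \rVert = \lVert M \rVert \), and \( \lVert \mymat{A}{i}{i} \rVert \le \lVert A \rVert \) for any submatrix.

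For the \( 2 \)-norm bound, the triangle inequality gives
\[
\lVert T \rVert \le \lVert \mymat{A}{1}{1} \rVert + \lVert \mymat{A}{2}{2} \rVert \le 2 \lVert A \rVert .
\]

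For the Frobenius-type bound, we first use \( \lVert \mymat{A}{1}{1} \rVert + \lVert \mymat{A}{2}{2} \rVert \le \lVert \mymat{A}{1}{1} \rVert_{\mathrm{F}} + \lVert \mymat{A}{2}{2} \rVert_{\mathrm{F}} \). By Cauchy--Schwarz,
\[
\lVert \mymat{A}{1}{1} \rVert_{\mathrm{F}} + \lVert \mymat{A}{2}{2} \rVert_{\mathrm{F}} \le \sqrt{2}\,\bigl( \lVert \mymat{A}{1}{1} \rVert_{\mathrm{F}}^{2} + \lVert \mymat{A}{2}{2} \rVert_{\mathrm{F}}^{2} \bigr)^{1/2} .
\]
The right-hand side is at most \( \sqrt{2}\lVert A \rVert_{\mathrm{F}} \), because the squared Frobenius norms of the diagonal blocks sum to at most \( \lVert A \rVert_{\mathrm{F}}^{2} \).

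For the \( \infty \)-norm, the cheapest route is the standard inequality \( \lVert T \rVert_{\infty} \le \sqrt{n}\,\lVert T \rVert \) with \( n = 4 \). That only yields \( 4\lVert A \rVert \), so a sharper argument is needed.

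\textbf{Row sums of \( T \).} The factor \( 2\sqrt{2} \) comes from examining the rows of \( T \) directly. Row \( (i,j) \) of \( T \) (with \( i \) indexing rows of \( \mymat{A}{1}{1} \) and \( j \) indexing columns of \( \mymat{A}{2}{2} \)) contains
\begin{itemize}
\item row \( i \) of \( \mymat{A}{1}{1} \) in the \( j \)-th block,
\item minus column \( j \) of \( \mymat{A}{2}{2} \), spread across blocks at position \( i \),
\item with the two contributions overlapping on the diagonal entry.
\end{itemize}
Hence the absolute row sum is at most \( \lVert \mymat{A}{1}{1}(i,:) \rVert_{1} + \lVert \mymat{A}{2}{2}(:,j) \rVert_{1} \). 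Each of these \( 1 \)-norms involves a vector of length \( 2 \), so it is at most \( \sqrt{2} \) times the corresponding \( 2 \)-norm. A row or column of a submatrix of \( A \) has \( 2 \)-norm at most \( \lVert A \rVert \). This gives the row sum bound \( \sqrt{2}\lVert A \rVert + \sqrt{2}\lVert A \rVert = 2\sqrt{2}\lVert A \rVert \).

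The only mildly delicate step is this row-by-row bookkeeping for the \( \infty \)-norm. One must identify the Kronecker structure correctly, and use the length-\( 2 \) \( 1 \)-to-\( 2 \) conversion rather than a global norm equivalence. Otherwise the argument is routine, and the assumption \( A \in \mathcal{S}_{4} \) is not needed beyond the block partitioning.
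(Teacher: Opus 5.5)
Your proof is correct and is what the paper intends. The paper writes out no proof and only remarks that the lemma is a straightforward application of standard norm properties (Golub--Van Loan, Section~2.3.2); your argument supplies exactly that, namely splitting off the two Kronecker summands by the triangle inequality, bounding submatrices by \( \lVert A \rVert \), and converting between the \( 1 \)- and \( 2 \)-norms in dimension \( 2 \). Your row-by-row \( \infty \)-norm bound is the same as \( \lVert T \rVert_{\infty} \le \lVert \mymat{A}{1}{1} \rVert_{\infty} + \lVert \mymat{A}{2}{2} \rVert_{1} \le \sqrt{2}\,( \lVert \mymat{A}{1}{1} \rVert + \lVert \mymat{A}{2}{2} \rVert ) \), so the global norm equivalence does suffice once it is applied to the \( 2 \times 2 \) blocks rather than to \( T \) itself.
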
 

The following lemma describes how the computed Givens rotations may be affected by perturbations. Since~\cite{Sai2022} is written in Chinese and its setting differs slightly from ours, we provide a brief proof in Appendix~\ref{app: Proof of Lemma perturbations of GR}.

\begin{lemma}[Sai~\cite{Sai2022}] \label{lemma: perturbations of GR} 
    Suppose \( v = [v_{1}, v_{2}]\trans \) and \( \wt{v} = [\wt{v}_{1}, \wt{v}_{2}]\trans \in \mathbb{R}^{2} \) satisfy \( \lvert \wt{v} - v \rvert \le \gamma_{k} \lvert v \rvert \). Let \( c \), \( s \) be constructed exactly from \( v \) according to~(\ref{structure of givens rotation}), and \( \wh{c} \), \( \wh{s} \) be the computed values from \( \wt{v} \) using the same construction. Then 
    \[
    \lvert \wh{c} - c \rvert \le \gamma_{2k+4}  \lvert c \rvert , \dspace 
    \lvert \wh{s} - s \rvert \le \gamma_{2k+4}  \lvert s \rvert .
    \]
    Moreover, if we set \( g = \sqrt{ v_{1}^{2} + v_{2}^{2} } \) and \( \wh{g} = \sffl \big( \sqrt{ \wt{v}_{1}^{2} + \wt{v}_{2}^{2} } \big) \), then 
    \[
    \lvert \wh{g} - g \rvert \le \gamma_{k+3} \cdot g  .
    \]
\end{lemma}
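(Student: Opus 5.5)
Let \( n = \lVert v\rVert = g \) and write \( \wt{v}_{i} = v_{i}(1+\epsilon_{i}) \) with \( \lvert \epsilon_{i}\rvert \le \gamma_{k} \). The plan is to track the relative error through each floating-point step of the construction (\ref{structure of givens rotation}). These steps are: squaring, adding, taking the square root, and dividing. Relative errors will be accumulated using the standard rules \( (1+\theta_{a})(1+\theta_{b}) = 1+\theta_{a+b} \) and \( (1+\theta_{a})^{-1}=1+\theta_{a} \) (in the sense \( \lvert\theta_{j}\rvert\le\gamma_{j} \)) from Higham~\cite{hig02}. We also assume, as the paper does, that \( \sffl(\sqrt{\cdot}) \) obeys \( \sffl(\sqrt{x}) = \sqrt{x}(1+\delta) \) with \( \lvert\delta\rvert\le\macheps \).

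First treat \( \wh{g} \). The computed squares are \( \sffl(\wt{v}_{i}^{2}) = v_{i}^{2}(1+\epsilon_{i})^{2}(1+\delta_{i}) = v_{i}^{2}(1+\theta_{2k+1}^{(i)}) \). The key observation is that the sum has two nonnegative terms, so no cancellation occurs. Hence
\[
\sffl\bigl(\sffl(\wt{v}_{1}^{2})+\sffl(\wt{v}_{2}^{2})\bigr) = (v_{1}^{2}+v_{2}^{2})(1+\theta_{2k+2}),
\]
because a convex combination of perturbations bounded by \( \gamma_{2k+1} \) is again bounded by \( \gamma_{2k+1} \), and the addition contributes one more rounding. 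Taking the square root halves the exponent: \( \sqrt{1+\theta_{2k+2}} \) is within \( \gamma_{k+1} \) of \( 1 \). This requires the elementary inequality \( \sqrt{(1-\macheps)^{-(2k+2)}} = (1-\macheps)^{-(k+1)} \) together with its lower-side counterpart \( \sqrt{1-\gamma_{2k+2}}\ge 1-\gamma_{k+1} \). The rounding of the square root itself then gives \( \wh{g} = g(1+\theta_{k+2}) \), which is within the claimed \( \gamma_{k+3} \). The slack absorbs any sloppiness, for example if one prefers the cruder bound \( \lvert\sqrt{1+x}-1\rvert\le\lvert x\rvert/2 \) plus a second-order term.

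Next treat \( \wh{c} = \sffl(\wt{v}_{1}/\wh{g}) = v_{1}(1+\epsilon_{1})(1+\delta)/\bigl(g(1+\theta_{k+2})\bigr) \). Combining the factors gives
\[
\wh{c} = c\,(1+\theta_{k})(1+\theta_{1})(1+\theta_{k+2}) = c\,(1+\theta_{2k+3}),
\]
which is within \( \gamma_{2k+4} \). The same computation applies to \( \wh{s} \), with a sign flip that is exact. If \( v=0 \), the convention for the rotation is used and the claim is trivial, since \( \lvert\wt{v}\rvert\le\gamma_{k}\lvert v\rvert \) forces \( \wt{v}=0 \).

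The only delicate point is the square-root step: we must show that halving the exponent is legitimate for both signs of \( \theta \). I would settle this by direct monotonicity. Since \( (1-\macheps)^{j}\le 1+\theta_{2j}\le(1-\macheps)^{-2j} \) in the relevant bounds, we get \( (1-\macheps)^{j/2}\cdots \), or more simply \( 1-\gamma_{j}\le\sqrt{1\pm\gamma_{2j}}\le1+\gamma_{j} \). Everything else is routine bookkeeping, and the stated constants \( k+3 \) and \( 2k+4 \) leave one unit of slack in each.
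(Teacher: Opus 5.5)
Your argument is correct up to second-order bookkeeping, which the spare unit absorbs, but it takes a different route from the paper.

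\textbf{How the two routes differ.} The paper passes through the exact rotation of \(\wt{v}\), with entries \(\wt{c},\wt{s}\). It quotes Higham's Lemma~19.7~\cite{hig02} for the pure rounding error \(\lvert\wh{c}-\wt{c}\rvert\le\gamma_{4}\lvert\wt{c}\rvert\). It then bounds the exact-arithmetic sensitivity \(\lvert\wt{c}/c-1\rvert\le\gamma_{2k}\) separately and composes the two. The claim about \(\wh{g}\) is only asserted from the standard model. You instead push the input factors \(1+\epsilon_{i}\) through squaring, summation, square root and division in a single forward pass. The paper's split is more modular: it reuses a textbook lemma and isolates the conditioning of \(v\mapsto c\). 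However, it must assume \(v_{1}\ne 0\) to form \(\wt{c}/c\). Your pass is self-contained and needs no case split for zero components. It also nominally yields \(\gamma_{2k+3}\), and it actually proves the \(\wh{g}\) statement that the paper leaves to the reader.

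\textbf{Two bookkeeping rules that are wrong as stated.} The conclusion survives, but these should be fixed.
\begin{itemize}
\item Since \(1-\gamma_{2j}=(1-\gamma_{j})^{2}-2\gamma_{j}^{2}\), one has \(\sqrt{1-\gamma_{2j}}<1-\gamma_{j}\). So the ``lower-side counterpart'' \(\sqrt{1-\gamma_{2k+2}}\ge 1-\gamma_{k+1}\) that you invoke is false.
\item The rule \((1+\theta_{a})^{-1}=1+\theta_{a}\) holds when \(1+\theta_{a}\) is a product of \(a\) factors \((1+\delta)^{\pm1}\). It fails for an arbitrary \(\lvert\theta_{a}\rvert\le\gamma_{a}\), such as your \(\epsilon_{i}\) or the lumped error of \(\wh{g}\).
\end{itemize}

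\textbf{Repair.} Take square roots before lumping. The factorwise bounds give \((1-\gamma_{k})(1-\macheps)^{2}\le\wh{g}/g\le(1+\gamma_{k})(1+\macheps)^{2}\), hence \(\lvert\wh{g}-g\rvert\le\gamma_{k+2}\,g\). Then \((1-\gamma_{k})(1-\macheps)^{k+3}\le\wh{c}/c\le(1+\gamma_{k})(1+\macheps)/(1-\gamma_{k+2})\). Both ends lie within \(\gamma_{2k+4}\) of \(1\) once \(\macheps\) is small, using \(1/(1-\gamma_{k+2})\le 1+\gamma_{k+3}\).

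The paper's step \(\lvert\wt{c}/c-1\rvert\le\gamma_{2k}\) has the same second-order looseness. As \(v_{1}/v_{2}\to 0\), \(\wt{c}/c\) can approach \((1+\gamma_{k})/(1-\gamma_{k})>1+\gamma_{2k}\). So the spare unit in \(\gamma_{2k+4}\) is genuinely used in both arguments.
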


In addition, we present a componentwise backward error result on Givens rotations, whose proof can be found in Appendix~\ref{app: proof of Lemma componentwise backward error result on GR}.

\begin{lemma} \label{lemma: componentwise backward error result on GR}
    Suppose \( X \in \mathbb{R}^{2 \times 2} \setminus \{0\} \) and the largest entry of \( \lvert X \rvert \) is in its first column. Let \( G = \mygiv{\mymat{x}{1}{1}}{\mymat{x}{2}{1}} \) and \( \wh{G} \) be the computed \( G \). Then   
    \[
    \sffl \Big(
    \wh{G}\trans 
    \begin{bmatrix}
        \mymat{x}{1}{2} \\
        \mymat{x}{2}{2}
    \end{bmatrix} \Big) = G\trans \begin{bmatrix}
        \mymat{x}{1}{2} + \Delta \mymat{x}{1}{2} \\
        \mymat{x}{2}{2} + \Delta \mymat{x}{2}{2} 
    \end{bmatrix} , 
    \]
    where 
    \[
    \lvert \Delta \mymat{x}{1}{2} \rvert 
    \le \gamma_{6} \big( 5 \lvert \mymat{x}{1}{2} \rvert + 8 \sigma_2( X ) \big) , \dspace 
    \lvert \Delta \mymat{x}{2}{2} \rvert 
    \le \gamma_{6} \big( 5 \lvert \mymat{x}{2}{2} \rvert + 8 \sigma_2( X ) \big) .
    \]
\end{lemma}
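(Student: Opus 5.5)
Write \( G = \mygiv{x_{11}}{x_{21}} \) with entries \( c \), \( s \), and \( r = \sqrt{x_{11}^2 + x_{21}^2} \). The plan is to model the computed product explicitly, using the standard model for the two entries and Lemma~\ref{lemma: perturbations of GR} (with \( k = 0 \)) for the computed \( \wh{c} \), \( \wh{s} \). Each entry of \( \sffl(\wh{G}\trans [x_{12}, x_{22}]\trans) \) is a two-term inner product. For the first entry,
\[
y_1 = \wh{c}\,x_{12}(1+\epsilon_1) - \wh{s}\,x_{22}(1+\epsilon_2), \qquad \lvert \epsilon_i \rvert \le \gamma_2 ,
\]
and similarly for the second. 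With \( \wh{c} = c(1+\theta_c) \) and \( \wh{s} = s(1+\theta_s) \), where \( \lvert \theta \rvert \le \gamma_4 \), we get
\[
y = G\trans x_{:,2} + E ,
\]
where the first entry of \( E \) is \( c x_{12}\eta_1 - s x_{22}\eta_2 \) with \( \lvert \eta_i \rvert \le \gamma_6 \), and the second entry has the same form. Setting \( \Delta x_{:,2} = G E \), which is exact since \( G \) is orthogonal, gives the stated identity. It remains to bound \( \lvert G E \rvert \) componentwise.

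The naive bound \( \lvert \Delta x_{12} \rvert \lesssim \gamma_6 (\lvert x_{12} \rvert + \lvert x_{22} \rvert) \) is too weak, because it carries the wrong component. The key observation is that the cross term can be traded for \( \sigma_2(X) \). Consider the quantity \( \lvert c x_{22} \rvert \) (and similarly \( \lvert s x_{12} \rvert \)). We have
\[
c x_{22} = \frac{x_{11} x_{22}}{r} = \frac{\det X}{r} + \frac{x_{21} x_{12}}{r} ,
\]
which gives \( \lvert c x_{22} \rvert \le \lvert \det X \rvert / r + \lvert s \rvert \lvert x_{12} \rvert \). The pivoting hypothesis is what controls \( \lvert \det X \rvert / r \). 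Since the largest entry of \( \lvert X \rvert \) lies in the first column, \( r \ge \lVert X \rVert_{\max} \ge \sigma_1(X)/2 \). Hence
\[
\frac{\lvert \det X \rvert}{r} = \frac{\sigma_1 \sigma_2}{r} \le 2\sigma_2(X) .
\]
Analogously, \( \lvert s x_{12} \rvert \le \lvert c x_{22} \rvert + 2\sigma_2 \).

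Expanding \( G E \), the first component is
\[
c(c x_{12}\eta_1 - s x_{22}\eta_2) - s(s x_{12}\eta_3 + c x_{22}\eta_4) .
\]
The terms \( c^2 x_{12} \) and \( s^2 x_{12} \) are bounded by \( \lvert x_{12} \rvert \). The cross terms involve \( \lvert cs\,x_{22} \rvert \le \lvert s \rvert \lvert c x_{22} \rvert \le \lvert s \rvert (\lvert s x_{12} \rvert + 2\sigma_2) \), which is at most \( \lvert x_{12} \rvert + 2\sigma_2 \). Collecting terms, we obtain a bound of the shape \( \gamma_6 (a \lvert x_{12} \rvert + b\,\sigma_2) \) with small constants. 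The second component is handled symmetrically, using \( \lvert s x_{12} \rvert \le \lvert c x_{22} \rvert + 2\sigma_2 \).

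The main obstacle is bookkeeping to reach exactly the constants 5 and 8. We must combine the errors of \( \wh{c} \) and \( \wh{s} \) (via Lemma~\ref{lemma: perturbations of GR}) with the multiply-add errors into a single \( \gamma_6 \), using \( \gamma_a + \gamma_b + \gamma_a\gamma_b \le \gamma_{a+b} \). We must also check the degenerate case \( r = 0 \), which is excluded because \( X \ne 0 \) and its maximal entry lies in column one. If the bound \( \sigma_1 \le 2\lVert X \rVert_{\max} \) is too lossy to give 8, I would replace it with \( r \ge \sigma_1/\sqrt{2} \). This follows because the first column has the largest entry, so \( r^2 \ge \max_{i,j} x_{ij}^2 \ge \lVert X \rVert_{\mathrm{F}}^2/4 \), which gives \( \sigma_1 \le \lVert X \rVert_{\mathrm F} \le 2r \). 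Either way, the constants are generous enough.
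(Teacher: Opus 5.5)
Your argument is correct, and it reaches the result by a different route from the paper in the one step that matters. The error modeling is the same as the paper's. Both proofs write each entry of $\sffl(\wh{G}\trans[x_{12},x_{22}]\trans)$ as a two-term inner product with relative perturbations bounded by $\gamma_6$, using $|\wh{c}-c|\le\gamma_4|c|$, $|\wh{s}-s|\le\gamma_4|s|$ and $\gamma_2$ for the arithmetic. Both then pull the error back through $G$ and arrive at $|\Delta x_{12}|\le\gamma_6(|x_{12}|+2|cs|\,|x_{22}|)$ and $|\Delta x_{22}|\le\gamma_6(|x_{22}|+2|cs|\,|x_{12}|)$.

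The difference is in how the cross terms are bounded. The paper assumes without loss of generality that $|x_{11}|$ is the maximal entry and treats singular $X$ separately. It then uses $\sigma_2(X)\ge\frac12|x_{22}-x_{12}x_{21}/x_{11}|$, obtained from $\lVert X^{-1}\rVert\le 2\lVert X^{-1}\rVert_{\max}$, and runs a case analysis on $|x_{22}|$ versus $|x_{12}|$. Within those cases it compares $|x_{22}|$ with $2|x_{12}|$, or $|x_{12}x_{21}|$ with $2|x_{11}x_{22}|$. The weakest subcase, $|cs|\,|x_{12}|\le\max\{2|x_{22}|,4\sigma_2(X)\}$, is where the constants $5$ and $8$ come from.

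You replace all of this with the single identity $cx_{22}+sx_{12}=\det X/r$, where $r=\sqrt{x_{11}^2+x_{21}^2}$. Combined with $|\det X|=\sigma_1\sigma_2$ and $\sigma_1\le\lVert X\rVert_{\mathrm F}\le 2\lVert X\rVert_{\max}\le 2r$, this handles singular $X$, the case $x_{21}=0$, and either position of the maximal entry within the first column, with no case split. Carried through, it gives $|cs\,x_{22}|\le s^2|x_{12}|+2|s|\sigma_2(X)$ and $|cs\,x_{12}|\le c^2|x_{22}|+2|c|\sigma_2(X)$. Hence $|\Delta x_{i2}|\le\gamma_6\bigl(3|x_{i2}|+4\sigma_2(X)\bigr)$ for $i=1,2$, which is sharper than the stated bound. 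The ``bookkeeping obstacle'' you anticipate therefore does not arise. Your route is shorter and gives better constants; the paper's case analysis buys nothing extra here.

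Two small corrections. First, your fallback claim $r\ge\sigma_1/\sqrt2$ is false. For $X=\bigl[\begin{smallmatrix}1&1\\0&1\end{smallmatrix}\bigr]$ the maximal entry lies in the first column and $r=1$, but $\sigma_1=(1+\sqrt5)/2>\sqrt2$. The derivation you attach to that claim only reproduces $\sigma_1\le 2r$, which is all you need, so drop the remark. Second, with $E=[E_1,E_2]\trans$, the first component of $GE$ is $cE_1+sE_2$, so the second term should carry a plus sign. This has no effect once absolute values are taken.
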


The final lemma provides a componentwise perturbation result for the SVD of a \( 2 \times 2 \) matrix. See Appendix~\ref{app: proof of stability of SVD} for its proof. 
\begin{lemma} \label{lemma: stability of SVD}
    Let \( c \) be a positive number with \( c < 1 \). For a matrix \( X \in \mathbb{R}^{2 \times 2} \), suppose the \textup{SVD} of \( X \) is \( X = U \Sigma V\trans \) and \( c + \kappa^{-1}( X ) < 1 \). Let \( \Delta X \) be a perturbation matrix satisfying \( \lVert \Delta X \rVert \le c \lVert X \rVert \). Suppose the \textup{SVD} of \( \wt{X} = X + \Delta X \) is \( \wt{X} = \wt{U}\wt{\Sigma}\wt{V}\trans \). Then for \( 1 \le i, j \le 2 \), 
    \[
    \lvert \mymat{\wt{u}}{i}{j} \rvert \le \lvert \mymat{u}{i}{j} \rvert + d + d^{2}, \dspace 
    \lvert \mymat{\wt{v}}{i}{j} \rvert \le \lvert \mymat{v}{i}{j} \rvert + d + d^{2} , 
    \] 
    where \( d = d( c, \kappa( X ) ) = c / ( 1 - c - \kappa^{-1}( X ) ) \).
\end{lemma}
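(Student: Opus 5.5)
We prove Lemma~\ref{lemma: stability of SVD} with a rotation-angle perturbation argument specialized to $2\times 2$ matrices. Both the left and right singular vector matrices of a $2\times 2$ matrix are (up to signs and a reflection) plane rotations, so the whole task reduces to bounding how much the singular subspaces can rotate. First normalize: since the hypotheses are scale invariant, assume $\lVert X\rVert=\sigma_1(X)=1$, so $\sigma_2(X)=\kappa^{-1}(X)$ and $\lVert \Delta X\rVert\le c$. By Weyl's inequality, $\sigma_1(\wt X)\ge 1-c$ and $\sigma_2(\wt X)\le \kappa^{-1}(X)+c$. Hence the gap between $\sigma_1(\wt X)$ and $\sigma_2(X)$ satisfies
\[
\delta := \sigma_1(\wt X)-\sigma_2(X)\ge 1-c-\kappa^{-1}(X)>0 .
\]
This is exactly where the hypothesis $c+\kappa^{-1}(X)<1$ enters; it makes the leading singular vectors well separated.

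Next apply Wedin's $\sin\Theta$ theorem to the leading singular pairs, using the gap $\delta$. Let $\theta_u$ and $\theta_v$ be the angles between $u_1$ and $\wt u_1$, and between $v_1$ and $\wt v_1$. Wedin's theorem gives
\[
\max\{\lvert\sin\theta_u\rvert,\lvert\sin\theta_v\rvert\}\le \lVert\Delta X\rVert/\delta\le d .
\]
If Wedin's theorem in the exact form needed is unavailable, I would instead derive it directly. Write $\wt X\wt v_1=\sigma_1(\wt X)\wt u_1$ and project onto $u_2$: this gives $\sigma_1(\wt X)\,u_2\trans\wt u_1-\sigma_2(X)\,v_2\trans\wt v_1=u_2\trans\Delta X\wt v_1$. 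Combining this with the analogous identity for $X\trans$ yields the bound with denominator $\delta$. Since in two dimensions the second singular vectors are orthogonal complements, the same angle bound holds for $u_2,\wt u_2$ and for $v_2,\wt v_2$.

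Finally, convert the angle bound into the entrywise bound. Choose signs so that $\cos\theta\ge 0$, and write $\wt u_1=\cos\theta_u\,u_1+\sin\theta_u\,u_2$. Then
\[
\lvert\mymat{\wt u}{i}{1}\rvert\le\lvert\mymat{u}{i}{1}\rvert+\lvert\sin\theta_u\rvert\le\lvert\mymat{u}{i}{1}\rvert+d .
\]
This is already within the claimed $d+d^2$. To be safe, I would also handle any sign or reflection ambiguity in how $\wt U$ is formed (for example $\wt u_2=\pm(-\wt u_{21},\wt u_{11})\trans$). The extra $d^2$ term covers a looser expansion: if one writes $\wt u_1=u_1+e$ with $\lVert e\rVert\le\lvert\sin\theta\rvert+(1-\cos\theta)$ and bounds $1-\cos\theta\le\sin^2\theta\le d^2$, the result is $d+d^2$. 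The second columns and $V$ follow identically.

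I expect the main obstacle to be the gap step. Wedin's bound must be used with the mixed gap $\sigma_1(\wt X)-\sigma_2(X)$ rather than the unperturbed gap, so that the denominator comes out as exactly $1-c-\kappa^{-1}(X)$. The sign convention also needs care so that the bound holds for the actual computed or chosen SVD factors.
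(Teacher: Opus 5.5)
Your proposal is correct and follows essentially the same route as the paper: Wedin's $\sin\Theta$ theorem for the leading singular pair with the mixed gap $\sigma_1(\tilde{X})-\sigma_2(X)\ge(1-c-\kappa^{-1}(X))\lVert X\rVert$, then converting the angle bound into entrywise bounds, with the second columns handled through the $2\times 2$ structure ($\lvert u_{11}\rvert=\lvert u_{22}\rvert$, $\lvert u_{12}\rvert=\lvert u_{21}\rvert$). Your direct expansion $\tilde{u}_1=\cos\theta_u\,u_1+\sin\theta_u\,u_2$ gives the slightly sharper increment $d$. The paper's extra $d^{2}$ comes from its triangle inequality through $(\tilde{u}_1^{\top}u_1)u_1$ together with the bound $1-\cos\theta\le\sin^{2}\theta$.
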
 

\section{Pivoted Givens QR} \label{sec: pivoted Givens QR}
Suppose a matrix \( H \in \mathbb{R}^{4 \times 2} \) takes the form  
\[
\begin{bmatrix}
          -X \\
           I_{2}
\end{bmatrix} =
\begin{bmatrix} 
    \times & \times \\
    \times & \times \\
    1      & 0      \\
    0      & 1
\end{bmatrix} .
\]
Our task is to find a \( 4 \times 4 \) orthogonal matrix \( Q \) such that 
\[
Q\trans H = 
\begin{bmatrix}
   Z \\
   0
\end{bmatrix} , 
\]  
where \( Z \) is a \( 2 \times 2 \) nonsingular matrix. Again, we emphasize that \( Z \) is not necessarily upper triangular.

Unlike the Householder QR employed in~\cite{Bai1993}, we introduce a strategy based on Givens rotations to address this issue. When the largest entry of \( \lvert X \rvert \) is in its first column, \( Q \) is accumulated via the following procedure:
    \begin{align} \label{order of rotations if col1>col2}
        \begin{bmatrix}
            \pmb{\times} & \times \\
            \pmb{\times} & \times \\
            1      & 0      \\
            0      & 1
        \end{bmatrix} 
        \xrightarrow[(2, 1)]{(1, 1)}
        \begin{bmatrix}
            \pmb{\times} & \times \\
            0      & \times \\
            \pmb{1}      & 0      \\
            0      & 1
        \end{bmatrix} 
        \xrightarrow[(3, 1)]{(1, 1)} 
        \begin{bmatrix}
            \times & \times \\
            0      & \pmb{\times} \\
            0      & \times \\
            0      & \pmb{1}
        \end{bmatrix} 
        \xrightarrow[(4, 2)]{(2, 2)} 
        \begin{bmatrix}
            \times & \times \\
            0      & \pmb{\times} \\
            0      & \pmb{\times}  \\
            0      & 0
        \end{bmatrix} 
        \xrightarrow[(3, 2)]{(2, 2)}
        \begin{bmatrix}
            \times & \times \\
            0      & \times \\
            0      & 0      \\
            0      & 0
        \end{bmatrix} .
    \end{align}
When the largest entry is in the second column, the accumulation proceeds as follows:
\begin{align} \label{order of rotations if col1<=col2}
    \begin{bmatrix}
        \times & \pmb{\times} \\
        \times & \pmb{\times} \\
        1      & 0      \\
        0      & 1
    \end{bmatrix}
    \xrightarrow[(2, 2)]{( 1, 2 )}
    \begin{bmatrix}
        \times & \pmb{\times} \\
        \times & 0     \\
        1      & 0      \\
           0      & \pmb{1}
    \end{bmatrix} 
     \xrightarrow[(4, 2)]{(1, 2)}
      \begin{bmatrix}
         \times & \times \\
         \pmb{\times} & 0     \\
         \pmb{1}      & 0      \\
         \times & 0
     \end{bmatrix} 
     \xrightarrow[(3, 1)]{(2, 1)}
     \begin{bmatrix}
        \times & \times \\
        \pmb{\times} & 0     \\
        0      & 0      \\
        \pmb{\times} & 0
    \end{bmatrix} 
    \xrightarrow[(4, 1)]{(2, 1)}
    \begin{bmatrix}
        \times & \times \\
        \times & 0     \\
        0      & 0      \\
        0      & 0
    \end{bmatrix} .
\end{align} 
Let \( G_{i} \in \mathbb{R}^{4 \times 4} \) denote the \( i \)th Givens rotation in the reduction. Then we have \( Q = G_{1}G_{2}G_{3}G_{4} \). We refer to this QR factorization strategy as the pivoted Givens QR. The corresponding modified direct swapping algorithm is described in Algorithm~\ref{alg: modified direct swapping method}. 

\begin{algorithm}[tb]
    \caption{Modified direct swapping method}
    \label{alg: modified direct swapping method}
    \begin{algorithmic}[1] 
    \bfInput A matrix \( A \in \mathcal{S}_{4} \).
    \State Let \( T = I_{2} \otimes \mymat{A}{1}{1} - \mymat[\top]{A}{2}{2} \otimes I_{2} \), \( b = \sfvec( \mymat{A}{1}{2} ) \). Solve the linear system \( Tx = \gamma b \) by GECP. Take \( X = \sfvec^{-1}( x ) \). 
    \State Compute the QR factorization of \( H = [-X\trans, \gamma I_{2}]\trans \) by the pivoted Givens QR such that 
    \[
    \begin{bmatrix}
        -X \\
        \gamma I_{2}
    \end{bmatrix} = Q
    \begin{bmatrix}
        Z \\
        0
    \end{bmatrix} .
    \]
    \State Stability test. If the swap is accepted, compute \( Q\trans AQ \) and reduce it to the real Schur form. 
    \bfOutput \( Q\trans AQ \in \mathcal{S}_{4} \).
    \end{algorithmic}
\end{algorithm}  

\section{Main result} \label{sec: main result}
Before presenting the main theoretical result of this work, we first clarify the meaning of error in eigenvalue reordering. Denote the compact set of orthogonal matrices that swap \( A \in \mathcal{T}_{4} \) by \( \mathcal{P} ( A ) \). The distance between a matrix \( M \in \mathbb{R}^{4 \times 4} \) and \( \mathcal{P}( A ) \) is denoted by 
\[
\textup{dist}( M, \mathcal{P}( A ) ) = \min_{Q \in \mathcal{P}( A )} \lVert M - Q \rVert . 
\]
The classical backward error perturbs the input so that the computed solution becomes exact for the perturbed problem. However, the eigenvalue reordering demands an orthogonal matrix, and thus a computed matrix that is only approximately orthogonal cannot, in principle, be an exact solution. As a consequence, we introduce the following mixed forward--backward error~\( \forbackerr{A}{\wh{Q}}{\theta} \) for eigenvalue reordering: \( \forbackerr{A}{\wh{Q}}{\theta} \) is the smallest perturbation~\( \Delta A \) (in the spectral norm) such that the approximate solution~\( \wh{Q} \) is within distance~\( \theta \) of the set of exact solutions of the perturbed problem~\( A + \Delta A \). Clearly, \( \forbackerr{A}{\wh{Q}}{\theta} \) is monotonically decreasing in~\( \theta \).

\begin{definition} \label{def: error in eigenvalue reordering}
Suppose \( A \in \mathcal{T}_{4} \) and \( \wh{Q} \in \mathbb{R}^{4 \times 4} \) is approximately orthogonal. Given a nonnegative \( \theta \) satisfying 
\[
\theta \ge \min_{ Q\trans Q = I_{4} } \lVert \wh{Q} - Q \rVert . 
\]
The error associated with \( A \), \( \wh{Q} \) and \( \theta \) in eigenvalue reordering is defined as
\[
\begin{aligned}
    \forbackerr{A}{\wh{Q}}{\theta} = \inf \big\{ \epsilon \ge 0 \colon 
    &\text{ there exists } \Delta A \in \mathcal{T}_{4}, 
    \text{ s.t. } \lVert \Delta A \rVert \le \epsilon \lVert A \rVert, \\
    &\; \textup{dist}( \wh{Q}, \mathcal{P}( A + \Delta A ) ) \le \theta \big\} .
\end{aligned}
\] 
\end{definition} 

From Definition~\ref{def: error in eigenvalue reordering}, for all \( \epsilon > \forbackerr{A}{\wh{Q}}{\theta} \), there exist \( \Delta A \in \mathcal{T}_{4} \) and orthogonal \( \wt{Q} \in \mathbb{R}^{4 \times 4} \) such that \( \wt{Q} \) swaps \( A + \Delta A \) and
\[
\lVert \Delta A \rVert 
\le \epsilon \lVert A \rVert, \dspace \lVert \wt{Q} - \wh{Q} \rVert \le \theta . 
\]
Then one can bound \( \lVert \mymat{\check{A}}{2}{1} \rVert \) as 
\begin{align*}
    \lVert \mymat{\check{A}}{2}{1} \rVert 
    &\le \lVert \wh{Q}\trans A\wh{Q} - \wt{Q}\trans ( A + \Delta A ) \wt{Q} \rVert \\ 
    &\le \lVert \wh{Q}\trans A\wh{Q} - \wh{Q}\trans ( A + \Delta A ) \wt{Q} \rVert + \lVert \wh{Q}\trans ( A + \Delta A ) \wt{Q} - \wt{Q}\trans ( A + \Delta A ) \wt{Q} \rVert \\
    &\le \lVert \wh{Q}\trans A \rVert \cdot \lVert \wh{Q} - \wt{Q} \rVert + \lVert \wh{Q}\trans \Delta A \rVert + \lVert \wh{Q} - \wt{Q} \rVert \cdot \lVert A + \Delta A \rVert \\
    &\le ( 2\theta + \epsilon ) \lVert A \rVert + \theta( \theta + 2\epsilon ) \lVert A \rVert . 
\end{align*} 
Thus, we have 
\begin{align} \label{bound for checkA21}
    \lVert \mymat{\check{A}}{2}{1} \rVert 
    \le \big( 2\theta + \forbackerr{A}{\wh{Q}}{\theta} \big) \lVert A \rVert + \theta \big( \theta + 2\forbackerr{A}{\wh{Q}}{\theta} \big) \lVert A \rVert . 
\end{align}
To obtain a tight bound for \( \lVert \mymat{\check{A}}{2}{1} \rVert \), we hope to find a \( \theta \) as small as possible while \( \forbackerr{A}{\wh{Q}}{\theta} \) remains small.

Suppose the SVD of \( \wh{X} \) is \( \wh{X} = \wh{U}\wh{\Sigma}\wh{V}\trans \), where \( \wh{\Sigma} = \diag( \wh{\sigma}_{1}, \wh{\sigma}_{2} ) \), \( \wh{\sigma}_{1} \ge \wh{\sigma}_{2} \ge 0 \). Now we state the main theoretical result of this paper. 
\begin{theorem} \label{theo: main result}
    Suppose \( A \in \mathcal{S}_{4} \) and 
    \[
    \max \Big\{ \frac{ \lvert \mymat{a}{1}{2} \rvert }{ \lvert \mymat{a}{2}{1} \rvert }, \frac{ \lvert \mymat{a}{2}{1} \rvert }{ \lvert \mymat{a}{1}{2} \rvert }, \frac{ \lvert \mymat{a}{3}{4} \rvert }{ \lvert \mymat{a}{4}{3} \rvert }, \frac{ \lvert \mymat{a}{4}{3} \rvert }{ \lvert \mymat{a}{3}{4} \rvert } \Big\} 
    \le \macheps^{-1} . 
    \]
    Let \( \wh{X} \), \( \wh{Q} \) be the computed \( X \), \( Q \) in Algorithm~\ref{alg: modified direct swapping method} and \( R = \mymat{A}{1}{1}\wh{X} - \wh{X}\mymat{A}{2}{2} - \mymat{A}{1}{2} \) be the residual. Denote \( \epsilon_{r} = \lVert R \rVert / \lVert A \rVert \) and 
    \begin{align*}
        \epsilon_{s}
        = \begin{dcases}
            \frac{ \wh{\calK} \macheps }{ ( 24 + 30\sqrt{2} )^{-1} - \wh{\calK} \macheps } \wedge ( \sqrt{ 1 + 2\wh{\sigma}_{2}^{2} } - 1 ), & \text{if~} \wh{\calK} \macheps < ( 24 + 30\sqrt{2} )^{-1} , \\[0.8ex]
            \sqrt{ 1 + 2\wh{\sigma}_{2}^{2} } - 1, & \text{if~} \wh{\calK} \macheps \ge ( 24 + 30\sqrt{2} )^{-1} ,
        \end{dcases} 
    \end{align*}
    where \( \wh{\calK} = \calK( \wh{X} ) \). Then there exists \( \theta \le ( 64 + 45\sqrt{2} ) \macheps \) such that the error \( \forbackerr{A}{\wh{Q}}{\theta} \) satisfies
    \[
    \forbackerr{A}{\wh{Q}}{\theta}
    \le c( \epsilon_{r}, \epsilon_{s} ) \macheps ,
    \]
    where 
    \[
    c( \epsilon_{r}, \epsilon_{s} )
    = 78456 + 39936\sqrt{2} + 78396\epsilon_{s} + 78204\epsilon_{r}( 1 + \epsilon_{s} ) + 38304\sqrt{2}( \epsilon_{r} + \epsilon_{s} + \epsilon_{r}\epsilon_{s} ) . 
    \]
\end{theorem}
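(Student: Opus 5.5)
The proof constructs explicitly a perturbation \( \Delta A \in \mathcal{T}_{4} \) and an orthogonal \( \wt{Q} \) that swaps \( A + \Delta A \), and then bounds \( \lVert \Delta A \rVert \) and \( \lVert \wt{Q} - \wh{Q} \rVert \). We split the total error into two sources: the residual of the Sylvester solve, and the rounding in the pivoted Givens QR.

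\textbf{Sylvester step.} The computed \( \wh{X} \) satisfies \( \mymat{A}{1}{1}\wh{X} - \wh{X}\mymat{A}{2}{2} = \mymat{A}{1}{2} + R \). So \( \wh{X} \) solves the Sylvester equation exactly for the perturbed matrix obtained by replacing \( \mymat{A}{1}{2} \) with \( \mymat{A}{1}{2} + R \). This costs \( \lVert R \rVert = \epsilon_{r}\lVert A \rVert \) in \( \Delta A \), and by Theorem~\ref{theo: iff condition for reordering} any orthogonal \( Q \) whose first two columns span \( \mathrm{range}[-\wh{X}\trans, I_{2}]\trans \) swaps this perturbed matrix.

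\textbf{QR step: backward error on \( \wh{X} \).} Next we show that the computed \( \wh{Q} \) lies within \( \theta = O(\macheps) \) of an exactly orthogonal \( \wt{Q} \) whose first two columns span \( [-(\wh{X}+E)\trans, I_{2}]\trans \), with \( E \) small. Suppose the largest entry of \( \lvert \wh{X} \rvert \) is in column~1; the other ordering is symmetric. We treat the four rotations one at a time.
\begin{enumerate}
\item Lemma~\ref{lemma: componentwise backward error result on GR} shows that applying the first computed rotation to column~2 equals the exact rotation applied to a column perturbed by \( \gamma_{6}(5\lvert x_{i2}\rvert + 8\sigma_{2}) \).
\item Lemma~\ref{lemma: perturbations of GR} then controls the later rotations, whose defining entries are themselves computed with relative error \( \gamma_{k} \).
\item Composing these steps gives \( \wh{Q} = Q_{\mathrm{exact}}(\wh{X} + E) + F \), with \( \lVert F \rVert \) bounded by a constant times \( \macheps \). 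This constant produces \( \theta \le (64+45\sqrt{2})\macheps \).
\item The componentwise bounds give \( \lvert E \rvert \lesssim \macheps(\lvert \wh{X} \rvert + \wh{\sigma}_{2}) \).
\end{enumerate}

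\textbf{Turning \( E \) into \( \Delta A \).} We have \( (\mymat{A}{1}{1} + \Delta_{1})(\wh{X}+E) - (\wh{X}+E)\mymat{A}{2}{2} = \mymat{A}{1}{2} + R + \mymat{A}{1}{1}E - E\mymat{A}{2}{2} \). The term \( \mymat{A}{1}{1}E - E\mymat{A}{2}{2} \) goes into the \( (1,2) \) block of \( \Delta A \), with norm \( \le 2\lVert A \rVert \lVert E \rVert \).

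This is too crude when \( \lVert \wh{X} \rVert \) is large. The remedy is to use the freedom to choose \( Z \): only the subspace matters, so \( E \) may be replaced by any \( E' \) that gives nearly the same subspace. A perturbation of size \( \macheps\wh{\sigma}_{1} \) along the dominant singular direction can be absorbed by rescaling. What remains is measured relative to \( \sqrt{1+2\wh{\sigma}_{2}^{2}} \), which is where \( \wh{\calK} \) enters. Concretely, we write \( \wh{X} = \wh{U}\wh{\Sigma}\wh{V}\trans \) and change basis. Lemma~\ref{lemma: stability of SVD} keeps the singular vectors of \( \wh{X}+E \) componentwise close, provided \( \wh{\calK}\macheps < (24+30\sqrt{2})^{-1} \); this produces the first branch of \( \epsilon_{s} \), of the form \( \wh{\calK}\macheps / (c^{-1} - \wh{\calK}\macheps) \). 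When that condition fails, we use directly the bound \( \lvert E \rvert \lesssim \macheps(\lvert\wh{X}\rvert + \wh{\sigma}_{2}) \), scaled by \( \sqrt{1+2\wh{\sigma}_{2}^{2}} \), which gives the second branch.

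\textbf{Assembling the constants.} Lemma~\ref{lemma: norm inequalities} converts between \( T \) and \( A \). The hypothesis \( \lvert a_{12}/a_{21} \rvert \le \macheps^{-1} \), and its analogue for the \( (3,4) \) block, is used to keep \( \Delta A \) in \( \mathcal{T}_{4} \) and to make the diagonal-block perturbations \( \Delta_{1}, \Delta_{2} \) of size \( O(\macheps\lVert A \rVert) \) compatible with the Schur structure, since \( \epsilon_{s} \) can produce them. The cross terms \( \epsilon_{r}(1+\epsilon_{s}) \) come from combining the two steps. Adding everything up gives \( c(\epsilon_{r},\epsilon_{s}) \).

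The main obstacle is the QR backward-error step: showing that the subspace error is controlled by \( \wh{\sigma}_{2} \), or by \( \wh{\calK} \), rather than by \( \wh{\sigma}_{1} \). This requires that the pivoting choice keeps the rotations well conditioned, and that the SVD perturbation argument goes through.
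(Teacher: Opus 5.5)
There is a genuine gap, and it lies in your Sylvester step rather than in the QR step. Charging the whole residual to $\Delta\mymat{A}{1}{2}$ costs $\lVert R\rVert = \epsilon_{r}\lVert A\rVert$. That puts an additive $\epsilon_{r}$ into $\forbackerr{A}{\wh{Q}}{\theta}$ which is \emph{not} multiplied by $\macheps$.

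But $\epsilon_{r}$ is not $O(\macheps)$. GECP only gives $\lVert r\rVert \le 64\gamma_{12}\lVert T\rVert\lVert\wh{x}\rVert \le 128\sqrt{2}\,\wh{\sigma}_{1}\gamma_{12}\lVert A\rVert$, and $\wh{\sigma}_{1}$ can be of the order of $\xi(A)$. In the regime of Corollary~\ref{cor: stability}, $\epsilon_{r}$ is only known to be below $1$. The content of the theorem is that the residual costs only $O((1+\epsilon_{r})\macheps)$ in backward error. That requires spreading it optimally over $\Delta\mymat{A}{1}{1}$, $\Delta\mymat{A}{2}{2}$ and $\Delta\mymat{A}{1}{2}$.

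You gesture at this mechanism for $E$ (absorbing the dominant singular direction), but it must be applied to $R$ as well. The paper does this with Higham's minimal-norm formula (Lemma~\ref{lemma: minimum of Delta a}) for $\wt{X} = \wh{X} - \Delta\wh{X}$, with $F = R + W$ and $W = -\mymat{A}{1}{1}\Delta\wh{X} + \Delta\wh{X}\mymat{A}{2}{2}$. The first three components of $D^{-1}(\wt{V}\trans\otimes\wt{U}\trans)\sfvec(F)$ are divided by at least $\sqrt{1+\wt{\sigma}_{1}^{2}}$. This cancels the factor $\wh{\sigma}_{1}$ in $\lVert r\rVert$ and $\lVert w\rVert$.

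What remains, and what your proposal never addresses, is the fourth component $\lvert\wt{u}_{2}\trans F\wt{v}_{2}\rvert/\sqrt{1+2\wt{\sigma}_{2}^{2}}$. Generic normwise estimates give only $O(\wh{\calK}\macheps\lVert A\rVert)$ there. So your target of an error controlled ``by $\wh{\sigma}_{2}$, or by $\wh{\calK}$, rather than by $\wh{\sigma}_{1}$'' is not enough. $c(\epsilon_{r},\epsilon_{s})$ contains no factor $\wh{\calK}$, and a bound of size $\wh{\calK}\macheps\lVert A\rVert$ is exactly what must be avoided.

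The missing idea is the componentwise analysis of GECP on the Kronecker-structured $T$ (Section~\ref{sec: contribution to error of solving linear system}). It uses:
\begin{itemize}
\item the standardized $2\times 2$ Schur blocks and the symmetry reductions of Proposition~\ref{prop: simplify A};
\item the bounds $\delta_{k}\le 1/\sqrt{1+\eta_{k}}$ on the smallest entries of $\wh{U}$ and $\wh{V}$ (Proposition~\ref{prop: estimate for smallest elements in U and V});
\item explicit tracking of the pivot sequence in two cases.
\end{itemize}
Together these give $\lvert\wh{u}_{2}\trans R\wh{v}_{2}\rvert/\sqrt{1+2\wh{\sigma}_{2}^{2}} = O((1+\epsilon_{r})\macheps\lVert A\rVert)$. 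This does not follow from backward stability of the solver alone: the GEPP experiment in Section~\ref{sec: numerical experiments} gives a value about $10^{4}$ times larger. The same entrywise bounds on $\lvert T\rvert$ are then reused for $W$ through $\lvert w\rvert\le 5\gamma_{6}\lvert T\rvert\lvert\wh{x}\rvert + 8\wh{\sigma}_{2}\gamma_{6}\lvert T\rvert[0,0,1,1]\trans$.

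Your QR step essentially matches Section~\ref{sec: contribution to error from QR factorization}, so your ``main obstacle'' is misplaced. There are also two smaller misattributions:
\begin{itemize}
\item The hypothesis $\max\{\cdots\}\le\macheps^{-1}$ has nothing to do with keeping $\Delta A\in\mathcal{T}_{4}$, which imposes no Schur structure. It ensures that the singular-vector shift from Lemma~\ref{lemma: stability of SVD}, about $\frac{20+8\sqrt{2}}{3}\gamma_{6}$ entrywise, is small relative to $\delta_{k}$. That lets the componentwise bounds transfer from $(\wh{u}_{2},\wh{v}_{2})$ to $(\wt{u}_{2},\wt{v}_{2})$.
\item The first branch of $\epsilon_{s}$ comes from the singular-value ratio $\sqrt{(1+2\wh{\sigma}_{2}^{2})/(1+2\wt{\sigma}_{2}^{2})}-1$, via $\lvert\wt{\sigma}_{2}-\wh{\sigma}_{2}\rvert\le\lVert\Delta\wh{X}\rVert$, not from the singular vectors.
\end{itemize}
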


We mention that both \( \epsilon_{r} \) and \( \epsilon_{s} \) are bounded by \( 1 \) provided 
\[
\xi( A ) 
= \frac{ \lVert A \rVert_{\mathrm{F}} }{ \textup{sep}( \mymat{A}{1}{1}, \mymat{A}{2}{2} ) } 
\]
is safely bounded by \( \macheps^{-1} \), where \( \textup{sep}( \mymat{A}{1}{1}, \mymat{A}{2}{2} ) = \sigma_{4}( T ) \) is the separation between \( \mymat{A}{1}{1} \) and \( \mymat{A}{2}{2} \). Indeed, we have the following corollary. 
\begin{corollary} \label{cor: stability}
    Suppose \( A \in \mathcal{S}_{4} \) satisfies the assumption of
    Theorem~\ref{theo: main result}. Let \( \epsilon_{0} = 64\gamma_{12} \). If
    \[ 
    \xi( A ) \le ( 48 + 60\sqrt{2} )^{-1} \epsilon_{0}^{-1} ,
    \] 
    then \( \epsilon_{r}, \epsilon_{s} < 1 \), and
    \[
    \lVert \mymat{\check{A}}{2}{1} \rVert 
    \le \big( c( 1,1 ) + 128 + 90\sqrt{2} \big) \macheps \lVert A \rVert + O( \macheps^{2} \lVert A \rVert ) . 
    \]
\end{corollary}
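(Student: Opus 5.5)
The corollary follows from Theorem~\ref{theo: main result} together with~(\ref{bound for checkA21}). What remains is to show \( \epsilon_{r} < 1 \) and \( \epsilon_{s} < 1 \) under the hypothesis on \( \xi( A ) \).

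\emph{Bounding \( \epsilon_{r} \).} Since \( \sfvec( R ) = T\wh{x} - b \) (with \( \gamma = 1 \)), the normwise GECP bound~(\ref{infinite norm bound for residual}) with \( n = 4 \) and \( \rho_{4} = 4 \) gives
\[
\lVert R \rVert_{\mathrm{F}} \le 2\lVert r \rVert_{\infty} \le 128\gamma_{12}\lVert T \rVert_{\infty}\lVert \wh{x} \rVert_{\infty} = 2\epsilon_{0}\lVert T \rVert_{\infty}\lVert \wh{x} \rVert_{\infty}.
\]
Lemma~\ref{lemma: norm inequalities} then gives \( \lVert T \rVert_{\infty} \le 2\sqrt{2}\lVert A \rVert \). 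To control \( \lVert \wh{x} \rVert \), write \( \wh{x} = T^{-1}( b + r ) \), so that
\[
\lVert \wh{x} \rVert \le \frac{ \lVert A \rVert_{\mathrm{F}} + \lVert r \rVert }{ \textup{sep}( \mymat{A}{1}{1}, \mymat{A}{2}{2} ) } .
\]
The \( \lVert r \rVert \) term is absorbed using the smallness of \( \epsilon_{0}\xi( A ) \). This yields \( \lVert \wh{X} \rVert \lesssim \xi( A ) \) up to a factor \( 1 + O( \epsilon_{0}\xi ) \). Hence \( \epsilon_{r} \lesssim 2\epsilon_{0} \cdot 2\sqrt{2} \cdot 2\, \xi( A ) \), and this is below \( 1 \) thanks to the constant \( ( 48 + 60\sqrt{2} )^{-1} \).

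\emph{Bounding \( \epsilon_{s} \).} Use \( \wh{\calK} \le \wh{\sigma}_{1} = \lVert \wh{X} \rVert \lesssim \xi( A ) \). Since \( \macheps \le \gamma_{12}/12 \), we have \( \wh{\calK}\macheps \) safely below \( ( 24 + 30\sqrt{2} )^{-1}/2 \). Consequently \( \epsilon_{s} \le \wh{\calK}\macheps / \big( ( 24 + 30\sqrt{2} )^{-1} - \wh{\calK}\macheps \big) < 1 \), because the numerator is less than half the denominator's leading term. The factor \( 48 + 60\sqrt{2} = 2( 24 + 30\sqrt{2} ) \) in the hypothesis is exactly what makes this work.

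\emph{Conclusion.} Since \( c \) is increasing in both arguments, \( \forbackerr{A}{\wh{Q}}{\theta} \le c( 1,1 )\macheps \) with \( \theta \le ( 64 + 45\sqrt{2} )\macheps \). Substituting into~(\ref{bound for checkA21}), \( 2\theta \) contributes \( ( 128 + 90\sqrt{2} )\macheps \), and \( \theta( \theta + 2\forbackerr{A}{\wh{Q}}{\theta} ) = O( \macheps^{2} ) \).

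The main obstacle is the careful chaining of constants in the \( \epsilon_{r} \) estimate. This involves the conversion between the \( \infty \)-norm and the \( 2 \)-norm, and the self-referential bound on \( \lVert \wh{x} \rVert \), which must stay within the slack the hypothesis provides.
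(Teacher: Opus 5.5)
Your proposal is correct and follows the paper's argument. You bound \( \lVert \wh{x} \rVert \) by roughly \( \xi( A ) \), deduce \( \epsilon_{r} < 1 \) and \( \wh{\calK}\macheps < ( 48 + 60\sqrt{2} )^{-1} \) (hence \( \epsilon_{s} < 1 \)), and then combine Theorem~\ref{theo: main result} with~(\ref{bound for checkA21}). The only difference is bookkeeping. The paper controls \( \lVert \wh{x} \rVert \) through \( \lVert \Delta T \rVert \le \epsilon_{0} \lVert T \rVert \), \( \kappa( T ) \le \sqrt{2}\,\xi( A ) \) and a standard perturbation lemma, whereas you use the \( \infty \)-norm residual bound and absorb \( \lVert r \rVert \) in \( \wh{x} = T^{-1}( b + r ) \). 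Your route gives slightly worse constants, but they remain well within the available slack.
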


\begin{proof}
    Consider the perturbed linear system \( ( T + \Delta T ) \wh{x} = b \) for the computed solution \( \wh{x} \), with \( T \) and \( b \) as in Algorithm~\ref{alg: direct swapping method}. Assume \( b \ne 0 \); otherwise \( \epsilon_{r} = \epsilon_{s} = 0 \). By Lemma~\ref{lemma: stability of GECP}, it is nor hard to show that \( \lVert \Delta T \rVert \le \epsilon_{0} \lVert T \rVert \). Notice that \( \kappa( T ) \le \sqrt{2} \, \xi( A ) \) by Lemma~\ref{lemma: norm inequalities}. If \( \sqrt{2} \, \epsilon_{0} \cdot \xi( A ) < 1 \), then by~\cite[Lemma~2.6.1]{Golub13} it can be shown that 
    \[
    \lVert \wh{x} \rVert 
    \le \frac{ 1 + \epsilon_{0} \cdot \kappa( T ) }{ 1 - \epsilon_{0} \cdot \kappa( T ) } \lVert x \rVert 
    \le \frac{ 1 + \sqrt{2} \, \epsilon_{0} \cdot \xi( A ) }{ 1 - \sqrt{2} \, \epsilon_{0} \cdot \xi( A ) } \lVert x \rVert . 
    \]
    Suppose \( x \) is the exact solution to \( Tx = b \). Using \( \wh{\calK} \le \lVert \wh{x} \rVert \) and \( x = T^{-1} b \), we have  
    \[
    \wh{\calK} \macheps 
    \le \frac{ 1 + \sqrt{2} \, \epsilon_{0} \cdot \xi( A ) }{ 1 - \sqrt{2} \, \epsilon_{0} \cdot \xi( A ) } \macheps \cdot \frac{ \lVert \mymat{A}{1}{2} \rVert_{\mathrm{F}} }{ \textup{sep}( \mymat{A}{1}{1}, \mymat{A}{2}{2} ) } 
    \le \frac{ 1 + \sqrt{2} \, \epsilon_{0} \cdot \xi( A ) }{ 1 - \sqrt{2} \, \epsilon_{0} \cdot \xi( A ) } \macheps \cdot \xi( A ) .  
    \] 
    For the residual \( r = -\Delta T \cdot \wh{x} \), by Lemma~\ref{lemma: norm inequalities} we have 
    \[
    \frac{ \lVert r \rVert }{ \lVert A \rVert } 
    \le \epsilon_{0} \lVert \wh{x} \rVert \cdot \frac{ \lVert T \rVert }{ \lVert A \rVert  } 
    \le \frac{ 2 \big( 1 + \sqrt{2} \,  \epsilon_{0} \cdot \xi( A ) \big) }{ 1 - \sqrt{2} \, \epsilon_{0} \cdot \xi( A ) } \epsilon_{0} \cdot \xi( A ) .
    \]
    Thus, a loose choice of \( \xi( A ) < ( 48 + 60\sqrt{2} )^{-1} \epsilon_{0}^{-1}  \) guarantees that \( \epsilon_{r}, \epsilon_{s} < 1 \). Our claim follows immediately from Theorem~\ref{theo: main result} and~(\ref{bound for checkA21}).
\end{proof}

Corollary~\ref{cor: stability} indicates that Algorithm~\ref{alg: modified direct swapping method} is stable in terms of the norm of \( \mymat{\check{A}}{2}{1} \) if \( A \) satisfies the assumption in Theorem~\ref{theo: main result} and \( \xi( A ) \) is bounded away from \( \macheps^{-1} \).

\begin{remark}
    One may be concerned that the constant factors in the bounds for \( \theta \) and \(\forbackerr{A}{\wh{Q}}{\theta} \) are too large to match the threshold used in the practical algorithm. However, since our primary goal is to establish bounds of the form \( O(\macheps) \) for \( \theta \) and \( \forbackerr{A}{\wh{Q}}{\theta} \), the constants appearing in the argument are chosen mainly to facilitate the proof and may not be sharp. In practice, as seen in our experiments, the actual norm of \( \mymat{\check{A}}{2}{1} \) is always much smaller than the bound we provide. 
\end{remark}  

\section{Numerical experiments} \label{sec: numerical experiments}
In this section, we present numerical experiments to show the stability of our algorithm. All experiments are conducted in double precision, that is, the unit roundoff \( \macheps \approx 1.1\times 10^{-16} \). 

The algorithms used for comparison are listed as follows. 
\begin{itemize}
    \item \texttt{DLAEXC}: the LAPACK routine developed to implement the direct swapping algorithm in~\cite{Bai1993} for the eigenvalue reordering in double precision. 

    \item \textbf{MDS}: our modified direct swapping algorithm replacing the Householder QR in \texttt{DLAEXC} with the pivoted Givens QR.   

    \item \textbf{RDS}: a refined direct swapping algorithm in~\cite{Camps19}. More specifically, if the norm of \( \mymat{\check{A}}{2}{1} \) exceeds the threshold after the first attempt, refinement steps will be applied until it is small enough.  
\end{itemize} 
To ensure a fair and accurate comparison, the reference algorithms are tested in their native programming languages, i.e., \texttt{DLAEXC} in Fortran~77 and RDS in MATLAB. Accordingly, we provide implementations of our algorithm in both languages. Our test algorithms and matrices, together with the additional numerical experiments results, are available at \url{https://github.com/xurentao/mds-code}. 

Key features of the three algorithms are summarized in Table~\ref{tab: comparison of three algorithms}. Notice that for a fair comparison, the stability tests of the three algorithms are set to be the same as that in \texttt{DLAEXC}, i.e.,
\[
\tol \le 10 \macheps \lVert A \rVert_{\max} .
\]  

Let \( \check{A} \) be the matrix \( A \) after swapping. For convenience, we denote the complex eigenvalue of \( \mymat{A}{1}{1} \) with nonnegative imaginary part by \( \lambda_{1} \), and that of \( \mymat{\check{A}}{2}{2} \) by \( \check{\lambda}_{1} \). Similarly, \( \lambda_{2} \) and \( \check{\lambda}_{2} \) denote the corresponding eigenvalues with nonnegative imaginary part of \( \mymat{A}{2}{2} \) and \( \mymat{\check{A}}{1}{1} \). 

\begin{table}[tbp]
\centering
\caption{Comparison of three algorithms.}
\renewcommand{\arraystretch}{1.2}
\begin{tabular}{lcc}
\hline
Algorithm & Solve \( Tx = b \) &  \( QR = [ -X\trans, I_{2} ]\trans \) \\ \hline 
\texttt{DLAEXC} & GECP & Householder QR \\ \hline
MDS & GECP & pivoted Givens QR \\ \hline
RDS & GEPP & SVD of \( X \) and iterative refinement \\ \hline
\end{tabular}
\label{tab: comparison of three algorithms}
\end{table}

\subsection{Test matrices} \label{subsec: test matrices}
Three test matrices are listed below. For reasons of space, all test matrices are shown to four decimal places with rounding to zero. 

\textbf{Test matrix 1:} 
Given 
\[
A = 
\begin{bmatrix}
    5.2705 \times 10^{-4} & 2.4180 \times 10^{-12} & 1.9644 & 1.9639 \\
    -7.7816 & 5.2705 \times 10^{-4} & 1.0212 &  0.9426 \\
     &  & 1.7847 \times 10^{-8} & 3.6409 \times 10^{-8} \\
     &  & -3.6409 \times 10^{-8} & 1.7847 \times 10^{-8} 
\end{bmatrix} .
\]
This matrix is specifically crafted, featuring \( \xi( A ) \approx 2.4 \times 10^{8} \) and a large ratio \( \lvert \mymat{a}{2}{1}/\mymat{a}{1}{2} \rvert \approx 3.2 \times 10^{12} \). 

\textbf{Test matrix 2:}
Given 
\[
A = 
\begin{bmatrix}
    5.2705 \times 10^{-4} & -7.7816 & 0.9426 & 1.0212 \\
    2.4180 \times 10^{-12} & 5.2705 \times 10^{-4} & 1.9639 & 1.9644 \\  
     &  &  1.7847 \times 10^{-8} & -3.6409 \times 10^{-8} \\
     &  &  3.6409 \times 10^{-8} & 1.7847 \times 10^{-8}
\end{bmatrix} .
\]
Clearly, this test matrix is obtained by a similarity transformation \( P \) on test matrix~\( 1 \), where 
\[
P = I_{2} \otimes P_{2} , \dspace P_{2} = \begin{bmatrix}
    0 & 1 \\
    1 & 0
\end{bmatrix} .
\]  

\textbf{Test matrix 3:} Given 
\[
A = 
\begin{bmatrix}
    0.4367 & 8.7951 \times 10^{-13} & 0.2059 & -2.1198 \\
    -0.5795 & 0.4367 & 0.7728 & -0.5159 \\
     &  & 0.6205 & -0.3141 \\
     &  & 0.3141 & 0.6205
\end{bmatrix} .
\]
This test matrix is designed with a clear separation between \( \lambda_{1} \) and \( \lambda_{2} \), and with~\( \xi( A )~\approx~15.2 \). However, the imaginary part of \( \lambda_{1} \) is on the order of \( 10^{-7} \), significantly less than \( \lVert A \rVert \). 

\subsection{Numerical results} \label{subsec: numerical results}
All numerical results are reported with \( 2 \) significant digits for simplicity, as shown in Tables~\ref{tab: results in Fortran 77} and~\ref{tab: results in MATLAB}. The variations in the results of MDS between Tables~\ref{tab: results in Fortran 77} and~\ref{tab: results in MATLAB} could be attributed to differences in our implementation details between the two programming environments.

\begin{table}[tbp]
\centering
\caption{Comparison between \texttt{DLAEXC} and MDS in Fortran 77.} 
\renewcommand{\arraystretch}{1.2}
\begin{tabular}{c|c|ccc}
\hline
Algorithm & Test matrix & \( \tol / \lVert A \rVert_{\max} \) & \( \lvert \lambda_{1} - \check{\lambda}_{1} \rvert / \lvert \lambda_{1} \rvert \) & \( \lvert \lambda_{2} - \check{\lambda}_{2} \rvert / \lvert \lambda_{2} \rvert \) \\
\hline
\multirow{4}{*}{\texttt{DLAEXC}} 
    & 1 & \( 2.2 \times 10^{-13} \) & \( 1.4 \times 10^{-7} \) & \( 5.1 \times 10^{-5} \) \\
    & 2 & \( 9.8 \times 10^{-18} \) & \( 9.4 \times 10^{-8} \) & \( 4.2 \times 10^{-9} \) \\
    & 3 & \( 1.6 \times 10^{-17} \) & \( 3.6 \times 10^{-11} \) & \( 3.2 \times 10^{-16} \) \\
\hline
\multirow{4}{*}{MDS} 
    & 1 & \( 5.7 \times 10^{-17} \) & \( 1.1 \times 10^{-8} \) & \( 4.5 \times 10^{-9} \) \\
    & 2 & \( 7.1 \times 10^{-18} \) & \( 4.1 \times 10^{-8} \) & \( 6.2 \times 10^{-10} \) \\
    & 3 & \( 1.3 \times 10^{-17} \) & \( 1.4 \times 10^{-11} \) & \( 1.8 \times 10^{-16} \) \\
\hline
\end{tabular}
\label{tab: results in Fortran 77}
\end{table}

As shown in Table~\ref{tab: results in Fortran 77}, \texttt{DLAEXC} fails on test matrix \( 1 \) as \( \tol \) exceeds the threshold \( 10 \macheps \lVert A \rVert_{\max} \) by a factor of approximately \( 10^{2} \), leading to an error code \texttt{INFO=1}. In contrast, \( \tol \) from MDS is safely below the threshold. Indeed, a MATLAB implementation (used here for its flexibility in inspecting intermediate quantities) of MDS shows that for test matrices \( 1 \) and \( 2 \), the computed \( \wh{X} \) and residual \( R \) satisfy \( \lVert R \rVert \approx 4.3 \times 10^{-12} \) and 
\begin{align} \label{phi(A) for test matrix 1 and 2} 
    \frac{ \lvert \wh{u}_{2}\trans R\wh{v}_{2} \rvert }{ \tsqrt{ 1 + 2\wh{\sigma}_{2}^{2} } } 
    \approx 1.5 \times 10^{-16} .
\end{align}  
By the analysis in Section~\ref{sec: strategy for the error analysis}, (\ref{phi(A) for test matrix 1 and 2}) implies that the contribution of solving the linear system to the bound for \( \lVert \mymat{\check{A}}{2}{1} \rVert_{\max} \) is at most of order \( 10^{-16} \). As a consequence, the contrasting performance of the two algorithms comes from the QR factorization. Specifically, the pivoted Givens QR exhibits superior stability over the Householder QR on test matrix~\( 1 \). On the other hand, both algorithms swap successfully on test matrix~\( 2 \). Note that the corresponding \( [ -\wh{X}\trans, I_{2} ]\trans \) of test matrices~\( 1 \) and~\( 2 \) only differ by a row permutation. These results indicate the QR factorization approach has a subtle but crucial effect on the error of eigenvalue swapping algorithms. Moreover, MDS produces smaller relative errors of \( \lambda_{1} \) and \( \lambda_{2} \) than \texttt{DLAEXC} on both test matrices. 

For test matrix \( 3 \), the performances of MDS and \texttt{DLAEXC} are comparable in terms of both \( \tol \) and the relative error of eigenvalues. In particular, for both algorithms the relative errors of \( \lambda_{1} \) are significantly larger than \( \macheps \) on all test matrices. This loss of precision exposes that the relative error of eigenvalues may still be large even when \( \tol \) is on the order of \( \macheps \lVert A \rVert_{\max} \).    

\begin{table}[tbp]
\centering
\caption{Comparison between RDS (after the first attempt) and MDS in MATLAB.} 
\renewcommand{\arraystretch}{1.2}
\begin{tabular}{c|c|ccc}
\hline
Algorithm & Test matrix & \( \tol / \lVert A \rVert_{\max} \) & \( \lvert \lambda_{1} - \check{\lambda}_{1} \rvert / \lvert \lambda_{1} \rvert \) & \( \lvert \lambda_{2} - \check{\lambda}_{2} \rvert / \lvert \lambda_{2} \rvert \) \\
\hline
\multirow{4}{*}{RDS} 
    & 1 & \( 2.0 \times 10^{-13} \) & \( 4.8 \times10^{-10} \) & \( 2.9 \times 10^{-1} \) \\
    & 2 & \( 2.7 \times 10^{-13} \) & \( 1.9 \times 10^{-7} \) & \( 3.3 \times 10^{-2} \) \\
    & 3 & \( 2.6 \times 10^{-17} \) & \( 8.8 \times 10^{-11} \) & \( 5.1 \times 10^{-16} \) \\
\hline
\multirow{4}{*}{MDS} 
    & 1 & \( 7.0 \times 10^{-18} \) & \( 3.7 \times10^{-8} \) & \( 6.2 \times 10^{-10} \) \\
    & 2 & \( 5.6 \times 10^{-17} \) & \( 2.0 \times 10^{-8} \) & \( 4.3 \times 10^{-9} \) \\
    & 3 & \( 5.9 \times 10^{-18} \) & \( 1.4 \times 10^{-11} \) & \( 8.0 \times 10^{-17} \) \\
\hline
\end{tabular}
\label{tab: results in MATLAB}
\end{table}

Table~\ref{tab: results in MATLAB} shows that on test matrices \( 1 \) and \( 2 \), RDS fails on the first attempt, thus requiring additional refinement steps. A numerical examination reveals that on test matrix~\( 1 \), the residual \( R_{\mathrm{GEPP}} \) associated with solving the linear system by Gaussian elimination with partial pivoting (GEPP) satisfies \( \lVert R_{\mathrm{GEPP}} \rVert \approx 5.9 \times 10^{-12} \) and 
\[
\frac{ \lvert \wh{u}_{2}\trans R_{\mathrm{GEPP}}\wh{v}_{2} \rvert }{ \tsqrt{ 1 + 2\wh{\sigma}_{2}^{2} } } \approx 2.0 \times 10^{-12} .
\]
And on test matrix~\( 2 \), these quantities are on the same order. These results justify the use of GECP in the direct swapping method. Additionally, our test shows that one refinement step is sufficient to reduce \( \tol \) below the threshold. Nevertheless, the final relative error \( \lvert \lambda_{2} - \check{\lambda}_{2} \rvert / \lvert \lambda_{2} \rvert \) implies \( \check{\lambda}_{2} \) suffers a severe loss of precision. Hence, regarding the stability of eigenvalues, the refinement step appears to be less effective on test matrices \( 1 \) and \( 2 \).  

\section{Strategy for the error analysis} \label{sec: strategy for the error analysis}
In this section we outline the strategy for the error analysis of Algorithm~\ref{alg: modified direct swapping method}. 

Let \( \wh{x} \) be the computed solution to the linear system \( Tx = b \) and \( \wh{X} = \sfvec^{-1}( \wh{x} ) \). Then the QR factorization in the second step of Algorithm~\ref{alg: modified direct swapping method} acts on the perturbed matrix 
\[
\wh{H} = \begin{bmatrix}
    -\wh{X} \\
    I_{2}
\end{bmatrix} 
\]
rather than on the exact \( H \). Additionally, because of rounding errors in the QR factorization of \( \wh{H} \), the resulting factor \( \wh{Q} \) is approximately orthogonal. 

Without loss of generality, we assume the largest entry of \( \lvert \wh{X} \rvert \) is in its first column. It will be shown in Section~\ref{sec: contribution to error from QR factorization} that by the pivoted Givens QR, there exist \( \Delta \wh{X} \in \mathbb{R}^{2 \times 2} \) and orthogonal \( \wt{Q} \in \mathbb{R}^{4 \times 4} \) satisfying 
\begin{align} \label{bounds for Delta X and Q}
    \lvert \Delta \wh{X} \rvert 
    \le \gamma_{6} \bigg( 5 
    \begin{bmatrix}
       0 & \lvert \mymat{\wh{x}}{1}{2} \rvert \\
       0 & \lvert \mymat{\wh{x}}{2}{2} \rvert 
    \end{bmatrix} + 8 \wh{\sigma}_{2} 
    \begin{bmatrix}
        0 & 1 \\
        0 & 1
    \end{bmatrix} \bigg), \dspace 
    \lVert \wt{Q} - \wh{Q} \rVert 
    \le ( 64 + 45 \sqrt{2} ) \macheps ,  
\end{align}
such that 
\begin{align*} 
    \begin{bmatrix}
        -\wh{X} + \Delta \wh{X} \\
        I_{2}
    \end{bmatrix} 
    = \wt{Q} 
    \begin{bmatrix}
        \wt{Z} \\
        0
    \end{bmatrix} .
\end{align*} 
Let \( \theta = \lVert \wt{Q} - \wh{Q} \rVert \) and \( \wt{X} = \wh{X} - \Delta \wh{X} \). If there is a matrix 
\[
\Delta A 
= \begin{bmatrix}
    \Delta \mymat{A}{1}{1} & \Delta \mymat{A}{1}{2} \\
        & \Delta \mymat{A}{2}{2} 
\end{bmatrix} \in \mathcal{T}_{4} , 
\]
such that  
\begin{align} \label{eq: sufficient condition for swapping}
\begin{bmatrix}
\mymat{A}{1}{1} + \Delta \mymat{A}{1}{1} & \mymat{A}{1}{2} + \Delta \mymat{A}{1}{2} \\
& \mymat{A}{2}{2} + \Delta \mymat{A}{2}{2} 
\end{bmatrix}  
\begin{bmatrix}
-\wt{X} \\
I_{2}
\end{bmatrix} 
= \begin{bmatrix}
-\wt{X} \\
I_{2}
\end{bmatrix} ( \mymat{A}{2}{2} + \Delta \mymat{A}{2}{2} ) ,
\end{align}
then \( \wt{Q} \) swaps \( A + \Delta A \) by Theorem~\ref{theo: iff condition for reordering} and \( \forbackerr{A}{\wh{Q}}{\theta} \le \lVert \Delta A \rVert / \lVert A \rVert \) by Definition~\ref{def: error in eigenvalue reordering}.

Clearly, equation (\ref{eq: sufficient condition for swapping}) is equivalent to 
\begin{align} \label{eq: short equation for sawpping}
( \mymat{A}{1}{1} + \Delta \mymat{A}{1}{1} ) \wt{X} - \wt{X} ( \mymat{A}{2}{2} + \Delta \mymat{A}{2}{2} ) = \mymat{A}{1}{2} + \Delta \mymat{A}{1}{2} .
\end{align}
For a solution \( \Delta A \) to~(\ref{eq: short equation for sawpping}), set 
\begin{align} \label{definition for Delta a}
    \Delta a 
    = \begin{bmatrix}
        \sfvec ( \Delta \mymat{A}{1}{1} ) \\
        \sfvec ( \Delta \mymat{A}{2}{2} ) \\
        \sfvec ( \Delta \mymat{A}{1}{2} )
    \end{bmatrix} .
\end{align}
Since \( \lVert \Delta A \rVert \le \lVert \Delta a \rVert \),
it suffices to study the minimum of \( \lVert \Delta a \rVert \). The following lemma characterizes the minimum of \( \lVert \Delta a \rVert \). Its proof follows directly from the argument in~\cite[pp.~4--5]{hig93}. 
\begin{lemma}[Higham~\cite{hig93}] \label{lemma: minimum of Delta a}
    Let \( \wt{X} = \wt{U}\wt{\Sigma}\wt{V}\trans \in \mathbb{R}^{2 \times 2} \) be the \textup{SVD} of \( \wt{X} \), in which \( \wt{\Sigma} = \diag( \wt{\sigma}_{1}, \wt{\sigma}_{2} ) \), \( \wt{\sigma}_{1} \ge \wt{\sigma}_{2} \ge 0 \). Set  
    \[
    M = [-\wt{X}\trans \otimes I_{2}, I_{2} \otimes \wt{X}, I_{4}] \in \mathbb{R}^{4 \times 12} , \dspace 
    F = \mymat{A}{1}{1} \wt{X} - \wt{X} \mymat{A}{2}{2} - \mymat{A}{1}{2} .  
    \] 
    Let \( M^{\dagger} \) denote the \textup{Moore--Penrose} pseudoinverse of \( M \) and \( f = \sfvec( F ) \). Then for all \( \Delta \mymat{A}{1}{1} \), \( \Delta \mymat{A}{1}{2} \) and \( \Delta \mymat{A}{2}{2} \) satisfying (\ref{eq: short equation for sawpping}), the minimum of \( \lVert \Delta a \rVert \) is attained at \( \Delta a = M^{\dagger} f \) and 
    \[
    \lVert M^{\dagger} f \rVert = \lVert D^{-1} ( \wt{V}\trans \otimes \wt{U}\trans ) f \rVert ,
    \]
    where 
    \[
    D = \diag\big( \tsqrt{ 1 + 2\wt{\sigma}_{1}^{2} }, \tsqrt{ 1 + \wt{\sigma}_{1}^{2} + \wt{\sigma}_{2}^{2} }, \tsqrt{ 1 + \wt{\sigma}_{1}^{2} + \wt{\sigma}_{2}^{2} }, \tsqrt{ 1 + 2\wt{\sigma}_{2}^{2} } \big) .
    \]
\end{lemma}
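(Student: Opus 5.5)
The equation~(\ref{eq: short equation for sawpping}) is linear in the unknowns \( (\Delta \mymat{A}{1}{1}, \Delta \mymat{A}{2}{2}, \Delta \mymat{A}{1}{2}) \), so the plan is to treat it as an underdetermined least-norm problem and then diagonalize it by the SVD of \( \wt{X} \).

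\emph{Vectorization.} Using \( \sfvec(BXC) = (C\trans \otimes B)\sfvec(X) \), we have
\[
\sfvec(\Delta \mymat{A}{1}{1}\wt{X}) = (\wt{X}\trans \otimes I_{2})\,\sfvec(\Delta \mymat{A}{1}{1}), \qquad
\sfvec(\wt{X}\Delta \mymat{A}{2}{2}) = (I_{2}\otimes \wt{X})\,\sfvec(\Delta \mymat{A}{2}{2}).
\]
Moving the known terms to the right-hand side, (\ref{eq: short equation for sawpping}) becomes \( \Delta \mymat{A}{1}{2} - \Delta \mymat{A}{1}{1}\wt{X} + \wt{X}\Delta \mymat{A}{2}{2} = F \). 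With \( \Delta a \) ordered as in~(\ref{definition for Delta a}), this reads \( M\Delta a = f \).

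\emph{Minimum-norm solution.} The matrix \( M \) contains the block \( I_{4} \), so it has full row rank and the system is consistent for every \( f \). Standard least-squares theory then says the minimum-norm solution is \( M^{\dagger}f = M\trans(MM\trans)^{-1}f \), with
\[
\lVert M^{\dagger}f\rVert^{2} = f\trans (MM\trans)^{-1} f .
\]

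\emph{Computing \( MM\trans \).} Multiplying out the three blocks gives
\[
MM\trans = \wt{X}\trans\wt{X}\otimes I_{2} + I_{2}\otimes \wt{X}\wt{X}\trans + I_{4}.
\]
Substituting \( \wt{X}\trans\wt{X} = \wt{V}\wt{\Sigma}^{2}\wt{V}\trans \) and \( \wt{X}\wt{X}\trans = \wt{U}\wt{\Sigma}^{2}\wt{U}\trans \), and using the mixed-product property of the Kronecker product, yields
\[
MM\trans = (\wt{V}\otimes\wt{U})\,\big(\wt{\Sigma}^{2}\otimes I_{2} + I_{2}\otimes \wt{\Sigma}^{2} + I_{4}\big)\,(\wt{V}\otimes\wt{U})\trans .
\]

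\emph{Reading off \( D \).} The middle factor is diagonal. Its entries, taken in Kronecker order \( (j,i) \) with \( j \) indexing \( \wt{V} \) and \( i \) indexing \( \wt{U} \), are \( 1+\wt{\sigma}_{j}^{2}+\wt{\sigma}_{i}^{2} \). This gives \( 1+2\wt{\sigma}_{1}^{2} \), then \( 1+\wt{\sigma}_{1}^{2}+\wt{\sigma}_{2}^{2} \) twice, then \( 1+2\wt{\sigma}_{2}^{2} \), so the middle factor is exactly \( D^{2} \).

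\emph{Conclusion.} Since \( \wt{V}\otimes\wt{U} \) is orthogonal, the previous display gives \( f\trans(MM\trans)^{-1}f = \lVert D^{-1}(\wt{V}\trans\otimes\wt{U}\trans)f\rVert^{2} \), which is the stated formula.

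None of these steps poses a real difficulty. The only places needing care are the signs and the Kronecker ordering in the vectorization, and confirming that the diagonal order in \( D \) matches the ordering of \( \wt{V}\otimes\wt{U} \). Note that the result holds for any real \( \wt{X} \); no nonsingularity is needed, because the \( I_{4} \) block makes \( MM\trans \) positive definite regardless.
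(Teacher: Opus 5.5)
Your proof is correct. It is essentially the argument the paper defers to in Higham~\cite{hig93}: vectorize the constraint into the full-row-rank system \( M\Delta a = f \), take the minimum-norm solution \( M^{\dagger}f = M\trans(MM\trans)^{-1}f \), and diagonalize \( MM\trans \) by the orthogonal matrix \( \wt{V}\otimes\wt{U} \), which identifies the middle factor as \( D^{2} \) (your signs and Kronecker ordering are right).
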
 

From Lemma~\ref{lemma: minimum of Delta a}, the key is to bound \( \lVert D^{-1} ( \wt{V}\trans \otimes \wt{U}\trans ) f \rVert \). Let 
\[
R = \mymat{A}{1}{1} \wh{X} - \wh{X} \mymat{A}{2}{2} - \mymat{A}{1}{2}, \dspace W = -\mymat{A}{1}{1} \Delta \wh{X} + \Delta \wh{X} \mymat{A}{2}{2} .
\] 
Then we have \( F = R + W \)
and 
\[ 
r = \sfvec( R ) = T\wh{x} - b = -\Delta T \cdot \wh{x} , \dspace 
w = \sfvec( W ) = -T \Delta \wh{x} , 
\] 
where \( \Delta \wh{x} = \sfvec( \Delta \wh{X} ) \). Using Lemma~\ref{lemma: norm inequalities} and~(\ref{bounds for Delta X and Q}), it can be shown that 
\begin{align*}
    \lVert r \rVert 
    &\le \lVert \Delta T \rVert \cdot \lVert \wh{x} \rVert
    \le 64 \gamma_{12} \lVert T \rVert \cdot \lVert \wh{x} \rVert
    \le 128\sqrt{2} \, \wh{\sigma}_{1} \cdot \gamma_{12} \lVert A \rVert , \\
    \lVert w \rVert 
    &\le \lVert T \rVert \cdot \lVert \Delta \wh{x} \rVert 
    \le \lVert T \rVert \cdot \gamma_{6} ( 5\wh{\sigma}_{1} + 8\sqrt{2} \, \wh{\sigma}_{2} ) 
    \le ( 10 + 16\sqrt{2} ) \wh{\sigma}_{1} \cdot \gamma_{6} \lVert A \rVert . 
\end{align*} 
Since \( \wt{V}\trans \otimes \wt{U}\trans \) is orthogonal and \( \lvert \wt{\sigma}_{1} - \wh{\sigma}_{1} \rvert \le \lVert \Delta \wh{X} \rVert \ll \wh{\sigma}_{1} \), the first three entries of \( D^{-1} ( \wt{V}\trans \otimes \wt{U}\trans ) f \) are bounded by 
\begin{align} \label{bound for first three entries of Delta A} 
    \lVert \big( D^{-1} ( \wt{V}\trans \otimes \wt{U}\trans ) f \big)_{1 \tightcolon 3} \rVert  
    \le \frac{ \lVert r \rVert + \lVert w \rVert }{ \tsqrt{ 1 + \wt{\sigma}_{1}^{2} } } 
    \le ( 128 \sqrt{2} \, \gamma_{12} + ( 10 + 16 \sqrt{2} ) \gamma_{6} ) \lVert A \rVert .
\end{align} 
Thus, we only need to focus on the fourth entry of \( D^{-1} ( \wt{V}\trans \otimes \wt{U}\trans ) f \), i.e., 
\begin{align} \label{fourth entry of Delta a}
\big( D^{-1} ( \wt{V}\trans \otimes \wt{U}\trans ) f \big)_{4}
= \frac{ \lvert \wt{u}_{2}\trans F\wt{v}_{2} \rvert }{ \tsqrt{ 1 + 2\wt{\sigma}_{2}^{2} } } , 
\end{align}
where \( \wt{u}_{2} \) and \( \wt{v}_{2} \) are the second columns of \( \wt{U} \) and \( \wt{V} \), respectively. 

When \( \wh{\kappa} := \kappa( \wh{X} ) \le 4 \), notice that 
\[
\wt{\sigma}_{2} \ge \wh{\sigma}_{2} - \lVert \Delta\wh{X} \rVert 
\ge \frac{ \wh{\sigma}_{1} }{ 4 } - \lVert \Delta\wh{X} \rVert 
\approx \frac{ \wh{\sigma}_{1} }{ 4 } .
\]
Then~(\ref{fourth entry of Delta a}) is bounded by 
\begin{align} \label{bound for the fourth entry when kappa <= 4} 
    \frac{ \lVert r \rVert + \lVert w \rVert }{ \tsqrt{ 1 + 2\wt{\sigma}_{2}^{2} } } 
    \le \frac{ 2\sqrt{2} ( \lVert r \rVert + \lVert w \rVert ) }{ \wh{\sigma}_{1} } 
    \le \big( 512 \gamma_{12} +  ( 64 + 20 \sqrt{2} ) \gamma_{6} \big) \lVert A \rVert 
\end{align}
and this case is settled. Hence, it suffices to consider the situation \( \wh{\kappa} > 4 \).

Denote 
\begin{align*}
    \wh{\phi}( A ) 
    = \frac{ \lvert \wh{u}_{2}\trans R \wh{v}_{2} \rvert }{ \sqrt{ 1 + 2\wh{\sigma}_{2}^{2} } }, \dspace 
    \wh{\psi}( A ) 
    = \frac{ \lvert \wh{u}_{2}\trans W \wh{v}_{2} \rvert }{ \sqrt{ 1 + 2\wh{\sigma}_{2}^{2} } } ,  
\end{align*}
where \( \wh{u}_{2} \) and \( \wh{v}_{2} \) are the second columns of \( \wh{U} \) and \( \wh{V} \), respectively. We will show that when \( \wh{\kappa} > 4 \), 
\begin{align} \label{bound for contribution of residual}
    &\wh{\phi}( A ) 
    \le ( 6272 + 3072\sqrt{2} ) ( 1 + \epsilon_{r} ) \cdot \gamma_{12} \lVert A \rVert, \\
    \label{bound for contribution of QR}
    &\wh{\psi}( A ) 
    \le \big( ( 490 + 240\sqrt{2} ) ( 1 + \epsilon_{r} ) + 32 \big) \cdot \gamma_{6} \lVert A \rVert  
\end{align} 
in Sections~\ref{sec: contribution to error of solving linear system} and~\ref{sec: contribution to error from QR factorization}, respectively. Both a normwise and a componentwise analysis will be used to bound \( \wh{\phi}( A ) \). In particular, (\ref{bound for contribution of residual}) arises from the normwise analysis, in which the unit vectors \( \wh{u}_{2} \) and \( \wh{v}_{2} \) are simply dropped. And it will be shown that the analysis of \( \wh{\phi}( A ) \) leads naturally to~(\ref{bound for contribution of QR}). 

In Section~\ref{sec: contribution to error of solving linear system}, the bounds for \( \wh{\phi}( A ) \) obtained via the componentwise analysis depend on 
\[
\lvert \mymat{\wh{u}}{1}{2} \rvert \wedge \lvert \mymat{\wh{u}}{2}{2} \rvert \le \frac{1}{ \sqrt{ 1 + \eta_{1} } }, \dspace
\lvert \mymat{\wh{v}}{1}{2} \rvert \wedge \lvert \mymat{\wh{v}}{2}{2} \rvert \le \frac{1}{ \sqrt{ 1 + \eta_{2} } } ,
\] 
where \( \eta_{1} \) and \( \eta_{2} \) are quantities that will be formally introduced in~(\ref{def: eta}) and
\[
\eta_{1} \le \frac{ \lvert \mymat{a}{1}{2} \rvert \vee \lvert \mymat{a}{2}{1} \rvert }{ \lvert \mymat{a}{1}{2} \rvert \wedge \lvert \mymat{a}{2}{1} \rvert }, \dspace 
\eta_{2} \le \frac{ \lvert \mymat{a}{3}{4} \rvert \vee \lvert \mymat{a}{4}{3} \rvert }{ \lvert \mymat{a}{3}{4} \rvert \wedge \lvert \mymat{a}{4}{3} \rvert } . 
\]
To establish the bound for~(\ref{fourth entry of Delta a}), we need to relate \( ( \wt{u}_{2}, \wt{v}_{2}, \wt{\sigma}_{2} ) \) to  \( ( \wh{u}_{2}, \wh{v}_{2}, \wh{\sigma}_{2} ) \). Since the relation between \( \wt{v}_{2} \) and \( \wh{v}_{2} \) is analogous to that between \( \wt{u}_{2} \) and \( \wh{u}_{2} \), we only treat the latter. By (\ref{bounds for Delta X and Q}) and \( \wh{\kappa} > 4 \), it follows that 
\[
\lVert \Delta \wh{X} \rVert 
\le \gamma_{6} ( 5\wh{\sigma}_{1} + 8\sqrt{2}\wh{\sigma}_{2} )
\le ( 5 + 2\sqrt{2} )\gamma_{6} \wh{\sigma}_{1} . 
\]
Applying Lemma~\ref{lemma: stability of SVD} to \( \wh{X} \) and \( \Delta \wh{X} \) with \( c = ( 5 + 2\sqrt{2} ) \gamma_{6} \), we obtain  
\[
\lvert \mymat{\wt{u}}{i}{2} \rvert 
\le \lvert \mymat{\wh{u}}{i}{2} \rvert + \frac{ 20 + 8\sqrt{2} }{ 3 } \gamma_{6}, \dspace
i = 1, 2 .
\] 
Notice that as \( c \ll 1 \), we simplify \( d \) to \( c / ( 1 - \wh{\kappa}^{-1} ) \) and bound \( \lvert \mymat{\wt{u}}{i}{2} \rvert \) by \( \lvert \mymat{\wh{u}}{i}{2} \rvert + d \), dropping the \( d^{2} \) term. 
Thus, if \( \eta_{1} \) satisfies 
\begin{align} \label{stable condition for uij}
    \frac{ 20 + 8\sqrt{2} }{ 3 }\gamma_{6} 
    \le \frac{ 0.1 }{ \tsqrt{ 1 + \eta_{1} } } , 
\end{align}
then replacing \( \wh{u}_{2} \) with \( \wt{u}_{2} \) in \( \wh{\phi}( A ) \) and \( \wh{\psi}( A ) \) has a limited effect on the constant factors of bounds obtained via the componentwise analysis. Thus, bounds~(\ref{bound for contribution of residual}) and~(\ref{bound for contribution of QR}) remain unchanged under this situation. Since \( \macheps \) is assumed to be sufficiently small, (\ref{stable condition for uij}) can be further simplified to the assumption for \( \lvert \mymat{a}{1}{2} \rvert \) and \( \lvert \mymat{a}{2}{1} \rvert \) in Theorem~\ref{theo: main result}. 

Finally we turn to \( \wt{\sigma}_{2} \) and \( \wh{\sigma}_{2} \).
Notice that 
\begin{align*}
    \Big\lvert \sqrt{ \frac{ 1 + 2\wt{\sigma}_{2}^{2} }{ 1 + 2\wh{\sigma}_{2}^{2} } } - 1 \Big\rvert
    &= \frac{2( \wh{\sigma}_{2} + \wt{\sigma}_{2} ) \lvert \wh{\sigma}_{2} - \wt{\sigma}_{2} \rvert }{ \sqrt{ 1 + 2\wh{\sigma}_{2}^{2} } ( \sqrt{ 1 + 2\wh{\sigma}_{2}^{2} } + \sqrt{ 1 + 2\wt{\sigma}_{2}^{2} } ) } \\ 
    &\le \frac{ \sqrt{2} \, \lVert \Delta \wh{X} \rVert }{ \sqrt{ 1 + 2\wh{\sigma}_{2}^{2} } } 
    \le ( 4 + 5\sqrt{2} ) \wh{\calK} \gamma_{6} , 
\end{align*} 
Hence if \( \wh{\calK} \gamma_{6} < ( 4 + 5\sqrt{2} )^{-1} \), then 
\[
\sqrt{ \frac{ 1 + 2\wh{\sigma}_{2}^{2} }{ 1 + 2\wt{\sigma}_{2}^{2} } } - 1
\le \frac{ \wh{\calK} \gamma_{6} }{ ( 4 + 5\sqrt{2} )^{-1} - \wh{\calK} \gamma_{6} } \wedge ( \tsqrt{ 1 + 2\wh{\sigma}_{2}^{2} } - 1 ) .   
\] 

Taking \( \gamma_{k} = k\macheps \) for simplicity and putting the above together, from~(\ref{bound for first three entries of Delta A})--(\ref{bound for contribution of QR}) we see that \( \lVert \Delta A \rVert \) is bounded by 
\begin{align*}
    &\lVert \big( D^{-1} ( \wt{V}\trans \otimes \wt{U}\trans ) f \big)_{1 \tightcolon 3} \rVert + ( 1 + \epsilon_{s} ) ( \wh{\phi}( A ) + \wh{\psi}( A ) ) \\
    &\le \big( 78456 + 39936 \sqrt{2} + 78396\epsilon_{s} + 78204\epsilon_{r}( 1 + \epsilon_{s} ) \\
    &\quad + 38304\sqrt{2}( \epsilon_{r} + \epsilon_{s} + \epsilon_{r}\epsilon_{s} ) \big) \macheps \lVert A \rVert 
\end{align*}
if \( A \in \mathcal{S}_{4} \) satisfies the assumption in Theorem~\ref{theo: main result}. Then we conclude the result. 

\section{Contribution to error from solving linear system} \label{sec: contribution to error of solving linear system}

The objective in this section is to prove~(\ref{bound for contribution of residual}). Our analysis begins by imposing convenient conditions on \( A \), using the fact that \( \wh{\phi}( A ) \) is invariant under certain permutations of \( \mymat{A}{i}{j} \). Afterward, we will present a series of propositions characterizing the sizes of \( \lvert \mymat{t}{i}{j} \rvert \). Two cases are identified to require further treatment. For these two cases, the GECP will be analyzed step by step to bound all entries of 
\[
S \coloneqq \wh{P}\trans \lvert \wh{L} \rvert \lvert \wh{U} \rvert \wh{Q} \ge 0 . 
\]
The componentwise bounds for \( S \) finally yield the main result of this section. 

From \( T = I_{2} \otimes \mymat{A}{1}{1} - \mymat[\top]{A}{2}{2} \otimes I_{2} \), we have 
\begin{align} \label{entries of T}
T = \begin{bmatrix}
        \mymat{a}{1}{1} - \mymat{a}{3}{3} & \mymat{a}{1}{2} & -\mymat{a}{4}{3} & 0 \\ 
        \mymat{a}{2}{1} & \mymat{a}{1}{1} - \mymat{a}{3}{3} & 0 & -\mymat{a}{4}{3} \\
        -\mymat{a}{3}{4} & 0 & \mymat{a}{1}{1} - \mymat{a}{3}{3} & \mymat{a}{1}{2} \\
        0 & -\mymat{a}{3}{4} & \mymat{a}{2}{1} & \mymat{a}{1}{1} - \mymat{a}{3}{3}
    \end{bmatrix} .
\end{align} 
The following proposition allows us to impose additional conditions on \( A \) without loss of generality. 
\begin{proposition} \label{prop: simplify A}
    Suppose \( A \in \mathcal{S}_{4} \). Let 
    \[
    B = \begin{bmatrix}
        \mymat[\top]{A}{1}{1} & P\mymat{A}{1}{2} \\
          & \mymat{A}{2}{2}
    \end{bmatrix}, \dspace 
    C = \begin{bmatrix}
        \mymat{A}{1}{1} & \mymat{A}{1}{2}P \\
          & \mymat[\top]{A}{2}{2}
    \end{bmatrix}, \dspace
    D = \begin{bmatrix}
        -\mymat[\top]{A}{2}{2} & \mymat[\top]{A}{1}{2} \\
          & -\mymat[\top]{A}{1}{1} 
    \end{bmatrix} ,
    \]
    where \( P = \begin{bmatrix}
        0 & 1 \\
        1 & 0
    \end{bmatrix} \). For \( A \), \( B \), \( C \) and \( D  \in \mathcal{S}_{4} \), denote the corresponding computed solutions to linear systems in Algorithm~\ref{alg: modified direct swapping method} step~1 by \( \wh{X} \), \( \wh{Y} \), \( \wh{Z} \) and \( \wh{W} \), respectively. Then 
    \[ 
    \wh{Y} = P\wh{X}, \dspace \wh{Z} = \wh{X}P, \dspace \wh{W} = \wh{X}\trans 
    \]
    and 
    \[
    \wh{\phi}( A ) = \wh{\phi}( B ) = \wh{\phi}( C ) =  \wh{\phi}( D ) . 
    \] 
\end{proposition}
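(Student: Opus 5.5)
The plan is to show that each of the three transformations $A\mapsto B$, $A\mapsto C$, $A\mapsto D$ changes the linear system $Tx=b$ of Algorithm~\ref{alg: modified direct swapping method} only by a symmetric permutation (together with a permutation of $b$, and for $D$ a sign change). GECP then performs exactly the same floating point operations on permuted data, so the computed solutions are permuted copies of one another. Once this is established, the invariance of $\wh{\phi}$ follows from how the SVD and the residual transform.

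\textbf{Step 1: the systems are permutation similar.} With $T_A = I_2\otimes A_{11} - A_{22}^\top\otimes I_2$, the pair $(T_B,b_B)$ comes from $A_{11}\mapsto A_{11}^\top$ and $A_{12}\mapsto PA_{12}$. For a $2\times 2$ block of the real Schur form, $A_{11}^\top = PA_{11}P$, since the diagonal entries are equal. Hence $T_B = (I_2\otimes P)T_A(I_2\otimes P)$ and $b_B = (I_2\otimes P)b_A$, because $\sfvec(PM)=(I\otimes P)\sfvec(M)$. For $C$ the analogous identity $A_{22}=PA_{22}^\top P$ gives $T_C=(P\otimes I_2)T_A(P\otimes I_2)$ and $b_C=(P\otimes I_2)b_A$. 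For $D$ the Sylvester equation $-A_{22}^\top W + W A_{11}^\top = A_{12}^\top$ is the transpose of the original one. With the commutation (vec-transpose) matrix $\Pi$ we get $T_D = -\Pi T_A\Pi$ and $b_D=\Pi b_A$, where the sign of $b$ must be tracked carefully. I expect the sign is absorbed because $D$ uses $-A_{22}^\top$, so $\wh{W}=\wh{X}^\top$ exactly.

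\textbf{Step 2: GECP commutes with these operations.} This is the main obstacle. Complete pivoting chooses the entry of maximal modulus, and ties could in principle be broken differently on the permuted matrix. I will argue that GECP applied to $\Pi_1 T\Pi_1^\top$ with $\Pi_1$ a permutation produces pivots equal to $\Pi_1\wh{P}$ and $\wh{Q}\Pi_1^\top$ up to a consistent tie-breaking convention. Either I adopt a convention that makes this hold, or I observe that the rounded quantities $\wh{L}$, $\wh{U}$ and the solution do not depend on which tied pivot is chosen. The second route is more delicate, so I would simply state that the pivot order is transported. Since every arithmetic operation is then identical, and negation and permutation are exact in floating point, $\wh{x}_B=(I\otimes P)\wh{x}_A$ and so on. This gives $\wh{Y}=P\wh{X}$, $\wh{Z}=\wh{X}P$ and $\wh{W}=\wh{X}^\top$.

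\textbf{Step 3: invariance of $\wh{\phi}$.} The residuals transform as $R_B = PR_A$, $R_C=R_AP$ and $R_D = \pm R_A^\top$. For $B$ this follows from $A_{11}^\top P\wh{X} - P\wh{X}A_{22} - PA_{12} = P(A_{11}\wh{X}-\wh{X}A_{22}-A_{12})$, using $A_{11}^\top P = PA_{11}$; the other two cases are similar. The singular values of $\wh{X}$ are unchanged by these operations. The singular vectors become $(P\wh{u}_2,\wh{v}_2)$, $(\wh{u}_2,P\wh{v}_2)$ and $(\wh{v}_2,\wh{u}_2)$ respectively, up to sign. Therefore $|\wh{u}_2^\top R\wh{v}_2|$ is preserved in each case. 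If $\wh{\sigma}_1=\wh{\sigma}_2$ the singular vectors are not unique; in that case I note that the same choice can be transported, so the equality holds for the corresponding SVDs.
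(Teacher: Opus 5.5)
Your route is the paper's: write $T_B$, $T_C$, $T_D$ as symmetric permutations of $T_A$, appeal to permutation-equivariance of GECP, then transport the residual and the SVD. The $B$ and $C$ cases and Step~3 are correct. The $D$ case, however, is wrong as written.

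\textbf{The $D$ case.} There is no minus sign. Since $D_{11}=-A_{22}^\top$ and $D_{22}=-A_{11}^\top$,
\[
T_D = I_2\otimes(-A_{22}^\top) - (-A_{11}^\top)^\top\otimes I_2 = A_{11}\otimes I_2 - I_2\otimes A_{22}^\top = \Pi T_A \Pi .
\]
This holds because $\Pi(M\otimes N)\Pi = N\otimes M$ for $2\times 2$ matrices $M,N$. Here $\Pi$ swaps indices $2$ and $3$, which is the paper's $T_D=QTQ$. Together with $b_D=\sfvec(A_{12}^\top)=\Pi b_A$, this gives $\hat w=\Pi\hat x$, i.e.\ $\hat W=\hat X^\top$ directly.

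With your identity $T_D=-\Pi T_A\Pi$, GECP would return $-\Pi\hat x$, so $\hat W=-\hat X^\top$. Saying ``the sign is absorbed'' is therefore not an argument; there is simply no sign to absorb. Your own remark already refutes the formula: the $D$-Sylvester equation is the transpose of the original, which forces $W=X^\top$ with no sign. Likewise $R_D=R_A^\top$ exactly, not $\pm R_A^\top$. The sign is harmless for $\hat\phi$ because of the absolute value, but it should be stated correctly.

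\textbf{Tie-breaking in Step~2.} Your caveat is legitimate and goes beyond the paper, which asserts permutation-equivariance of GECP as a bare fact. Ties are structural here: $a_{12}$ sits at both $(1,2)$ and $(3,4)$ in $T_A$ and in $T_C$. Any fixed position-based tie-breaking rule therefore picks the same position in both matrices. But the permutation relating them, $1\leftrightarrow 3$, $2\leftrightarrow 4$, maps $(1,2)$ to $(3,4)$, so the operation sequences no longer correspond. Assuming a transported tie-breaking rule is exactly the paper's tacit hypothesis, so your Step~2 is no weaker than the paper's.
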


By Proposition~\ref{prop: simplify A}, for all \( A \in \mathcal{S}_{4} \), one can keep \( \wh{\phi}( A ) \) unchanged while 
\begin{align} \label{order relation of A11 and A22}
\lvert \mymat{a}{1}{2} \rvert \ge \lvert \mymat{a}{2}{1} \rvert, \dspace \lvert \mymat{a}{4}{3} \rvert \ge \lvert \mymat{a}{3}{4} \rvert 
\end{align}
by applying suitable permutations to \( \mymat{A}{i}{j} \). Thus, we will insist on (\ref{order relation of A11 and A22}) in the rest of this section. 

From \( \wh{X} = \wh{U}\wh{\Sigma}\wh{V}\trans \), the residual \( R = \mymat{A}{1}{1}\wh{X} - \wh{X}\mymat{A}{2}{2} - \mymat{A}{1}{2} \) can be written as 
\begin{align} \label{eq: residual eqution}
\wh{U}\trans \mymat{A}{1}{1}\wh{U}\wh{\Sigma} - \wh{\Sigma}\wh{V}\trans \mymat{A}{2}{2}\wh{V} = \wh{U}\trans \mymat{A}{1}{2}\wh{V} + \wh{U}\trans R\wh{V} .
\end{align}
Set 
\[
\mymat{\wh{A}}{1}{1} = \wh{U}\trans \mymat{A}{1}{1} \wh{U}, \dspace \mymat{\wh{A}}{2}{2} = \wh{V}\trans \mymat{A}{2}{2} \wh{V}, \dspace \mymat{\wh{A}}{1}{2} = \wh{U}\trans ( \mymat{A}{1}{2} + R ) \wh{V} .
\] 
Then (\ref{eq: residual eqution}) can be reduced to 
\begin{align} \label{eq: reduced residual equation}
    \mymat{\wh{A}}{1}{1}\wh{\Sigma} - \wh{\Sigma}\mymat{\wh{A}}{2}{2} = \mymat{\wh{A}}{1}{2} .
\end{align}
Denote 
\[
( \mymat{\wh{a}}{i}{j} ) = \begin{bmatrix}
    \mymat{\wh{A}}{1}{1} & \mymat{\wh{A}}{1}{2} \\
                       0 & \mymat{\wh{A}}{2}{2} 
\end{bmatrix} .
\] 
Note that (\ref{order relation of A11 and A22}) does not determine whether \( \lvert \mymat{\wh{a}}{1}{2} \rvert \ge \lvert \mymat{\wh{a}}{2}{1} \rvert \), and the same holds for \( \lvert \mymat{\wh{a}}{3}{4} \rvert \) and \( \lvert \mymat{\wh{a}}{4}{3} \rvert \). However, it is not hard to show that for \( A \) and \( D \) in Proposition~\ref{prop: simplify A}, 
\[
\mymat{\wh{D}}{1}{1} = -\mymat[\top]{\wh{A}}{2}{2}, \dspace \mymat{\wh{D}}{2}{2} = -\mymat[\top]{\wh{A}}{1}{1} .
\]
Moreover, for \( B \) and \( C \) in Proposition~\ref{prop: simplify A}, 
\[ 
\mymat{\wh{B}}{1}{1} = \mymat{\wh{A}}{1}{1}, \dspace 
\mymat{\wh{B}}{2}{2} = \mymat{\wh{A}}{2}{2}, \dspace 
\mymat{\wh{C}}{1}{1} = \mymat{\wh{A}}{1}{1}, \dspace 
\mymat{\wh{C}}{2}{2} = \mymat{\wh{A}}{2}{2} .
\]
Hence, we can further assume for \( A \in \mathcal{S}_{4} \),
\begin{align} \label{order realation between off-diagonal entries}
\lvert \mymat{\wh{a}}{1}{2} \rvert \vee \lvert \mymat{\wh{a}}{2}{1} \rvert \ge \lvert \mymat{\wh{a}}{3}{4} \rvert \vee \lvert \mymat{\wh{a}}{4}{3} \rvert .
\end{align} 

To characterize the sizes of \( \lvert \mymat{\wh{a}}{i}{j} \rvert \) in \( \mymat{\wh{A}}{1}{1} \) and \( \mymat{\wh{A}}{2}{2} \), we introduce two quantities 
\begin{align} \label{def: eta}
\eta_{1} 
= \frac{ \lvert \mymat{\wh{a}}{1}{2} \rvert \vee \lvert \mymat{\wh{a}}{2}{1} \rvert }{ \lvert \mymat{\wh{a}}{1}{2} \rvert \wedge \lvert \mymat{\wh{a}}{2}{1} \rvert }, \dspace 
\eta_{2} 
= \frac{ \lvert \mymat{\wh{a}}{3}{4} \rvert \vee \lvert \mymat{\wh{a}}{4}{3} \rvert }{ \lvert \mymat{\wh{a}}{3}{4} \rvert \wedge \lvert \mymat{\wh{a}}{4}{3} \rvert } .
\end{align}
The following proposition provides an estimate for \( \tau = \max \{ \lvert \mymat{\wh{a}}{1}{2} \rvert, \lvert \mymat{\wh{a}}{2}{1} \rvert, \lvert \mymat{\wh{a}}{3}{4} \rvert, \lvert \mymat{\wh{a}}{4}{3} \rvert \} \). 

\begin{proposition} \label{prop: estimate for tau}
    Suppose \( A \in \mathcal{S}_{4} \) satisfies (\ref{order relation of A11 and A22}), (\ref{order realation between off-diagonal entries}) and \( \wh{\kappa} > 4 \). Then 
    \begin{align*}
        \tau \le 
        \begin{dcases}
        \frac{ 2( 1 + \epsilon_{r} ) }{ \wh{\calK} } \lVert A \rVert \eta_{1}, & \text{if~} \lvert \mymat{\wh{a}}{1}{2} \rvert > \lvert \mymat{\wh{a}}{2}{1} \rvert , \\[0.8ex]
        \frac{ 4( 1 + \epsilon_{r} ) }{ 3\wh{\sigma}_{1} } \lVert A \rVert, & \text{if~} \lvert \mymat{\wh{a}}{1}{2} \rvert \le \lvert \mymat{\wh{a}}{2}{1} \rvert .
        \end{dcases}
    \end{align*}
\end{proposition}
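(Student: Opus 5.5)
The plan is to work entirely with the reduced residual equation~(\ref{eq: reduced residual equation}), \( \mymat{\wh{A}}{1}{1}\wh{\Sigma} - \wh{\Sigma}\mymat{\wh{A}}{2}{2} = \mymat{\wh{A}}{1}{2} \), and read off its off-diagonal entries. Since \( \wh{U} \) and \( \wh{V} \) are orthogonal, we have
\[
\lVert \mymat{\wh{A}}{1}{1} \rVert = \lVert \mymat{A}{1}{1} \rVert \le \lVert A \rVert, \dspace
\lVert \mymat{\wh{A}}{2}{2} \rVert = \lVert \mymat{A}{2}{2} \rVert \le \lVert A \rVert ,
\]
\[
\lVert \mymat{\wh{A}}{1}{2} \rVert \le \lVert \mymat{A}{1}{2} \rVert + \lVert R \rVert \le ( 1 + \epsilon_{r} ) \lVert A \rVert .
\]
In particular every entry of \( \mymat{\wh{A}}{1}{2} \) has modulus at most \( ( 1 + \epsilon_{r} ) \lVert A \rVert \), and \( \lvert \mymat{\wh{a}}{4}{3} \rvert \le \lVert A \rVert \).

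Comparing the \( (1,2) \) and \( (2,1) \) entries of the reduced equation, with \( \wh{\Sigma} = \diag( \wh{\sigma}_{1}, \wh{\sigma}_{2} ) \) and the indexing \( ( \mymat{\wh{a}}{i}{j} ) \) of the full \( 4 \times 4 \) matrix, gives
\[
\mymat{\wh{a}}{1}{2} \wh{\sigma}_{2} - \wh{\sigma}_{1} \mymat{\wh{a}}{3}{4} = \mymat{\wh{a}}{1}{4}, \dspace
\mymat{\wh{a}}{2}{1} \wh{\sigma}_{1} - \wh{\sigma}_{2} \mymat{\wh{a}}{4}{3} = \mymat{\wh{a}}{2}{3} .
\]
By~(\ref{order realation between off-diagonal entries}), \( \tau = \lvert \mymat{\wh{a}}{1}{2} \rvert \vee \lvert \mymat{\wh{a}}{2}{1} \rvert \). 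Therefore the key quantity in both cases is \( \lvert \mymat{\wh{a}}{2}{1} \rvert \), which the second identity controls:
\[
\lvert \mymat{\wh{a}}{2}{1} \rvert \wh{\sigma}_{1} \le ( 1 + \epsilon_{r} ) \lVert A \rVert + \wh{\sigma}_{2} \lvert \mymat{\wh{a}}{4}{3} \rvert .
\]

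\emph{Case \( \lvert \mymat{\wh{a}}{1}{2} \rvert \le \lvert \mymat{\wh{a}}{2}{1} \rvert \).} Here \( \tau = \lvert \mymat{\wh{a}}{2}{1} \rvert \), and~(\ref{order realation between off-diagonal entries}) gives \( \lvert \mymat{\wh{a}}{4}{3} \rvert \le \tau \). The assumption \( \wh{\kappa} > 4 \) gives \( \wh{\sigma}_{2} < \wh{\sigma}_{1}/4 \). Substituting both into the displayed inequality yields \( \tau \wh{\sigma}_{1} \le ( 1 + \epsilon_{r} ) \lVert A \rVert + \tau \wh{\sigma}_{1}/4 \). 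Absorbing the last term on the left gives \( \tau \le 4( 1 + \epsilon_{r} ) \lVert A \rVert / ( 3\wh{\sigma}_{1} ) \). Note that \( \wh{\sigma}_{1} > 0 \), because \( \wh{\kappa} > 4 \) is finite.

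\emph{Case \( \lvert \mymat{\wh{a}}{1}{2} \rvert > \lvert \mymat{\wh{a}}{2}{1} \rvert \).} Now \( \tau = \lvert \mymat{\wh{a}}{1}{2} \rvert = \eta_{1} \lvert \mymat{\wh{a}}{2}{1} \rvert \), so it suffices to show \( \lvert \mymat{\wh{a}}{2}{1} \rvert \le 2( 1 + \epsilon_{r} ) \lVert A \rVert / \wh{\calK} \). The circular bound \( \lvert \mymat{\wh{a}}{4}{3} \rvert \le \tau \) is useless here, since it reintroduces \( \eta_{1} \). Instead I will use the cruder bound \( \lvert \mymat{\wh{a}}{4}{3} \rvert \le \lVert \mymat{\wh{A}}{2}{2} \rVert \le \lVert A \rVert \). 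This gives
\[
\lvert \mymat{\wh{a}}{2}{1} \rvert \wh{\sigma}_{1} \le ( 1 + \epsilon_{r} + \wh{\sigma}_{2} ) \lVert A \rVert \le ( 1 + \epsilon_{r} )( 1 + \wh{\sigma}_{2} ) \lVert A \rVert .
\]
Finally, \( 1 + \wh{\sigma}_{2} \le \sqrt{2}\sqrt{1 + \wh{\sigma}_{2}^{2}} \le \sqrt{2}\sqrt{1 + 2\wh{\sigma}_{2}^{2}} \). Hence \( \lvert \mymat{\wh{a}}{2}{1} \rvert \le \sqrt{2}( 1 + \epsilon_{r} ) \lVert A \rVert / \wh{\calK} \), which is stronger than required, and multiplying by \( \eta_{1} \) finishes the case.

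The main obstacle is this second case. The natural route, which is what works in the first case, bounds \( \lvert \mymat{\wh{a}}{4}{3} \rvert \) by \( \tau \) and fails here. The point is to give up that comparison and accept the extra factor \( 1 + \wh{\sigma}_{2} \) from \( \lvert \mymat{\wh{a}}{4}{3} \rvert \le \lVert A \rVert \). This factor is exactly what converts \( \wh{\sigma}_{1} \) into \( \wh{\calK} \) in the denominator. One should also note that \( \eta_{1} \) is finite in this case: since \( \mymat{\wh{A}}{1}{1} \) is orthogonally similar to a \( 2 \times 2 \) block with complex eigenvalues, its off-diagonal entries are nonzero with opposite signs, so \( \lvert \mymat{\wh{a}}{2}{1} \rvert > 0 \). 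The same fact ensures that \( \eta_{1} \) and \( \eta_{2} \) are well defined.
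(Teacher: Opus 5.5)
Your proof is correct. The first case (\( \lvert \mymat{\wh{a}}{1}{2} \rvert \le \lvert \mymat{\wh{a}}{2}{1} \rvert \)) is exactly the paper's argument. The second case is argued differently.

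\textbf{The paper's second case.} It bounds \( \lvert \mymat{\wh{a}}{1}{2} \rvert \) directly, using \( \lvert \mymat{\wh{a}}{4}{3} \rvert \le \lvert \mymat{\wh{a}}{1}{2} \rvert \). That is precisely the comparison with \( \tau \) you call useless. It gives \( \lvert \mymat{\wh{a}}{2}{3} \rvert \ge ( \eta_{1}^{-1} - \wh{\kappa}^{-1} ) \wh{\sigma}_{1} \lvert \mymat{\wh{a}}{1}{2} \rvert \), which is informative only when \( \wh{\kappa} > 2\eta_{1} \). So the paper splits into two regimes:
\begin{itemize}
\item when \( \wh{\kappa} > 2\eta_{1} \), it obtains \( 2( 1 + \epsilon_{r} ) \eta_{1} \lVert A \rVert / \wh{\sigma}_{1} \);
\item when \( \wh{\kappa} \le 2\eta_{1} \), the trivial \( \lvert \mymat{\wh{a}}{1}{2} \rvert \le \lVert A \rVert \le 2\eta_{1} \lVert A \rVert / \wh{\kappa} \) suffices.
\end{itemize}
It then merges the two through \( \wh{\calK} \le \wh{\sigma}_{1} \wedge \wh{\kappa} \). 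So that comparison does work; it just costs a case split.

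\textbf{Your second case.} Your crude bound \( \lvert \mymat{\wh{a}}{4}{3} \rvert \le \lVert A \rVert \) gives in one line
\( \lvert \mymat{\wh{a}}{2}{1} \rvert \le ( 1 + \epsilon_{r} ) \lVert A \rVert / \wh{\sigma}_{1} + \lVert A \rVert / \wh{\kappa} \).
This adds the two terms the paper obtains in separate regimes. The inequality \( 1 + \wh{\sigma}_{2} \le \sqrt{2} \sqrt{ 1 + 2\wh{\sigma}_{2}^{2} } \) then converts it to \( \sqrt{2} ( 1 + \epsilon_{r} ) \lVert A \rVert / \wh{\calK} \).

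\textbf{What each route buys.} Yours avoids the split and improves the constant from \( 2 \) to \( \sqrt{2} \). It also yields directly the bound on \( \lvert \mymat{\wh{a}}{2}{1} \rvert \) that the paper later recovers by dividing \( \tau \) by \( \eta_{1} \) in its appendix proof for case 1. The paper's version keeps both cases structurally parallel, each comparing \( \mymat{\wh{a}}{4}{3} \) with \( \tau \). It also retains an \( \eta_{1} \)-free bound in one regime, which the stated proposition does not record.

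\textbf{One small slip.} Under the paper's convention \( \kappa = \infty \) for singular matrices, so \( \wh{\kappa} > 4 \) does not make \( \wh{\kappa} \) finite. What you actually need for \( \wh{\sigma}_{1} > 0 \) is \( \wh{X} \ne 0 \). If \( \wh{X} = 0 \), both bounds are vacuous, so nothing is lost.
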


Next we establish upper bounds for the smallest entries in \( \lvert \wh{U} \rvert \) and \( \lvert \wh{V} \rvert \). 
\begin{proposition} \label{prop: estimate for smallest elements in U and V}
    Suppose \( A \in \mathcal{S}_{4} \). Let \( \wh{U} = (\mymat{\wh{u}}{i}{j}) \) and \( \wh{V} = (\mymat{\wh{v}}{i}{j}) \). 
    Then
    \[
    \delta_{1} \coloneqq \min \lvert \mymat{\wh{u}}{i}{j} \rvert \le \frac{ 1 }{ \tsqrt{ 1 + \eta_{1}} } , \dspace 
    \delta_{2} \coloneqq \min \lvert \mymat{\wh{v}}{i}{j} \rvert \le \frac{ 1 }{ \tsqrt{ 1 + \eta_{2}} } .
    \]
\end{proposition}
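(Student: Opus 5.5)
The plan is to reduce the statement to an explicit computation with a $2\times 2$ orthogonal matrix conjugating a $2\times 2$ block in standardized form. I treat only $\delta_1$ and $\eta_1$. The claim for $\delta_2$ is identical with $\mymat{A}{2}{2}$, $\wh{V}$, $\mymat{\wh{A}}{2}{2}$ in place of $\mymat{A}{1}{1}$, $\wh{U}$, $\mymat{\wh{A}}{1}{1}$.

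\emph{Setting up.} Since $A\in\mathcal{S}_4$, the block $\mymat{A}{1}{1}$ has the form $\mymat{a}{1}{1} I_2 + N$ with $N=\begin{bmatrix}0 & b\\ c & 0\end{bmatrix}$, where $b=\mymat{a}{1}{2}$, $c=\mymat{a}{2}{1}$ and $bc<0$. The $2\times 2$ block has a genuine complex pair, so $b,c\neq 0$. I split $N=pE+qJ$ with
\[
E=\begin{bmatrix}0&1\\1&0\end{bmatrix},\quad J=\begin{bmatrix}0&1\\-1&0\end{bmatrix},\quad p=\tfrac{b+c}{2},\quad q=\tfrac{b-c}{2}.
\]
Because $b$ and $c$ have opposite signs, $|p|<|q|$.

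\emph{Conjugating by $\wh{U}$.} Write $\wh{U}$ as a rotation or a reflection with angle $\varphi$, so that its entries are $\pm\cos\varphi$ and $\pm\sin\varphi$. The identity part is unchanged by the conjugation. The skew part satisfies $\wh{U}\trans J\wh{U}=\pm J$, with $+$ for a rotation and $-$ for a reflection. The symmetric part $\wh{U}\trans E\wh{U}$ has off-diagonal entries $\pm\cos 2\varphi$, and its remaining contribution lies on the diagonal. Hence the off-diagonal entries of $\mymat{\wh{A}}{1}{1}$ are, up to signs, $p\cos2\varphi+q$ and $p\cos2\varphi-q$.

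Let $t=|\cos2\varphi|$. Then the larger of the two moduli is $|q|+|p|t$ and the smaller is $|q|-|p|t$, which is positive because $|p|<|q|$. Using $|p|/|q|\le 1$ gives
\[
\eta_1=\frac{|q|+|p|t}{|q|-|p|t}\le\frac{1+t}{1-t}\quad\text{when } t<1,
\qquad\text{so}\qquad 1+\eta_1\le \frac{2}{1-t}.
\]

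\emph{Conclusion and main obstacle.} Finally, $\delta_1^2=\min(\cos^2\varphi,\sin^2\varphi)=(1-t)/2$, which is at most $1/(1+\eta_1)$ by the display above. This gives the claim for $\delta_1$. The main point needing care is the degenerate case $t=1$, where $\wh{U}$ is a signed permutation and $\delta_1=0$. There $\eta_1$ is still finite since $|p|<|q|$, so the inequality holds trivially. Some care is also needed so that the sign bookkeeping for rotations versus reflections does not alter the moduli. The residual $R$ enters only through $\mymat{\wh{A}}{1}{2}$, so it plays no role here.
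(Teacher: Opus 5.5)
Your argument is correct, and it reaches the bound by a somewhat different route from the paper's. The paper takes $\wh{U}$ to be a rotation with entries $\wh{c},\wh{s}$ and writes $\mymat{\wh{a}}{1}{2}=\wh{c}^{2}\mymat{a}{1}{2}-\wh{s}^{2}\mymat{a}{2}{1}$, $\mymat{\wh{a}}{2}{1}=\wh{c}^{2}\mymat{a}{2}{1}-\wh{s}^{2}\mymat{a}{1}{2}$. It then inverts this relation, dividing by $\wh{c}^{2}-\wh{s}^{2}$, which is why it disposes of $\eta_{1}=1$ first, and so expresses $\mymat{a}{1}{2},\mymat{a}{2}{1}$ through the hatted entries. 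Inserting $\mymat{\wh{a}}{2}{1}=-\eta_{1}\mymat{\wh{a}}{1}{2}$ (or the reverse) into $\mymat{a}{1}{2}\mymat{a}{2}{1}<0$ gives $\lvert\wh{s}\rvert>\sqrt{\eta_{1}}\lvert\wh{c}\rvert$ or $\lvert\wh{c}\rvert>\sqrt{\eta_{1}}\lvert\wh{s}\rvert$.

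You work forward instead. The splitting into $pE+qJ$ turns the sign condition into $\lvert p\rvert<\lvert q\rvert$ and gives $\eta_{1}$ in closed form, so the estimate becomes a monotonicity statement in $\lvert p\rvert/\lvert q\rvert$. As a result the edge cases need no separate treatment: $t=0$ gives $\eta_{1}=1$, $t=1$ gives $\delta_{1}=0$, and the smaller off-diagonal entry is automatically positive. Your formulas even give the exact identity $\delta_{1}^{2}(1+\eta_{1})=(1-t)/(1-\rho t)$ with $\rho=\lvert p\rvert/\lvert q\rvert<1$, which measures the slack in the bound.

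They also make the companion Proposition~\ref{prop: relation of entries between A11 and hatA11} transparent; the paper says only that it can be derived from the same computation. In your notation $\{\lvert\mymat{a}{1}{2}\rvert,\lvert\mymat{a}{2}{1}\rvert\}=\{\lvert q\rvert\pm\lvert p\rvert\}$, $\{\lvert\mymat{\wh{a}}{1}{2}\rvert,\lvert\mymat{\wh{a}}{2}{1}\rvert\}=\{\lvert q\rvert\pm\lvert p\rvert t\}$, and $\lvert\mymat{\wh{a}}{1}{2}-\mymat{\wh{a}}{2}{1}\rvert=2\lvert q\rvert=\lvert\mymat{a}{1}{2}-\mymat{a}{2}{1}\rvert$. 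The paper's version, in exchange, stays in the $(\wh{c},\wh{s})$ coordinates it reuses later, for instance when bounding $\lvert\mymat{a}{1}{1}-\mymat{a}{3}{3}\rvert$ via $\lvert\mymat{\wh{a}}{1}{1}-\mymat{\wh{a}}{2}{2}\rvert=2\lvert\wh{c}\wh{s}\rvert\,\lvert\mymat{a}{1}{2}+\mymat{a}{2}{1}\rvert$. The two arguments are of comparable length.
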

 
The proof of Proposition~\ref{prop: estimate for smallest elements in U and V} can be found in Appendix~\ref{app: proof of estimate for smallest elements in U and V}. 
Under the condition (\ref{order relation of A11 and A22}), we have 
\begin{align} \label{positions of smaller entries of U and V} 
    \delta_{1}  
    &= \begin{dcases}
        \lvert \mymat{\wh{u}}{1}{2} \rvert 
        = \lvert \mymat{\wh{u}}{2}{1} \rvert, & \text{if~} \lvert \mymat{\wh{a}}{1}{2} \rvert > \lvert \mymat{\wh{a}}{2}{1} \rvert , \\[0.8ex]
        \lvert \mymat{\wh{u}}{1}{1} \rvert
        = \lvert \mymat{\wh{u}}{2}{2} \rvert, & \text{if~} \lvert \mymat{\wh{a}}{1}{2} \rvert \le \lvert \mymat{\wh{a}}{2}{1} \rvert ,
    \end{dcases} \\ 
    \delta_{2} 
    &= \begin{dcases}
        \lvert \mymat{\wh{v}}{1}{2} \rvert 
        = \lvert \mymat{\wh{v}}{2}{1} \rvert, & \text{if~} \lvert \mymat{\wh{a}}{4}{3} \rvert > \lvert \mymat{\wh{a}}{3}{4} \rvert , \\[0.8ex]
        \lvert \mymat{\wh{v}}{1}{1} \rvert
        = \lvert \mymat{\wh{v}}{2}{2} \rvert, & \text{if~} \lvert \mymat{\wh{a}}{4}{3} \rvert \le \lvert \mymat{\wh{a}}{3}{4} \rvert .
    \end{dcases}
\end{align} 
In addition, the following proposition can also be derived from the proof. It characterizes the relation between the off-diagonal entries of \( \mymat{A}{1}{1} \) and \( \mymat{\wh{A}}{1}{1} \), as well as \( \mymat{A}{2}{2} \) and \( \mymat{\wh{A}}{2}{2} \).
\begin{proposition} \label{prop: relation of entries between A11 and hatA11} 
    Suppose \( A \in \mathcal{S}_{4} \) satisfies (\ref{order relation of A11 and A22}). For \( \mymat{A}{1}{1} \) and \( \mymat{\wh{A}}{1}{1} \), 
    \[
    \begin{array}{llll}
        \textup{(1)} & \lvert \mymat{a}{1}{2} - \mymat{a}{2}{1} \rvert 
        = \lvert \mymat{\wh{a}}{1}{2} - \mymat{\wh{a}}{2}{1} \rvert, 
        & \textup{(2)} & \lvert \mymat{a}{2}{1} \rvert 
        \le \lvert \mymat{\wh{a}}{1}{2} \rvert \wedge \lvert \mymat{\wh{a}}{2}{1} \rvert, \\
        \textup{(3)} & \lvert \mymat{a}{1}{2} \rvert 
        \ge \lvert \mymat{\wh{a}}{1}{2} \rvert \vee \lvert \mymat{\wh{a}}{2}{1} \rvert, 
        & \textup{(4)} & \lvert \mymat{a}{1}{2} \rvert 
        \le \lvert \mymat{\wh{a}}{1}{2} \rvert + \lvert \mymat{\wh{a}}{2}{1} \rvert .
    \end{array}
    \]
    Analogous results are also valid for \( \mymat{A}{2}{2} \) and \( \mymat{\wh{A}}{2}{2} \). 
\end{proposition}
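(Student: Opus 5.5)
The plan is to write the $2\times 2$ Schur block as a scalar multiple of the identity plus a symmetric and a skew-symmetric part, and then track how each part changes under the orthogonal similarity $\mymat{\wh{A}}{1}{1} = \wh{U}\trans \mymat{A}{1}{1}\wh{U}$. Since $A\in\mathcal{S}_4$, we have $\mymat{a}{1}{1}=\mymat{a}{2}{2}=:\alpha$ and $\mymat{a}{1}{2}\mymat{a}{2}{1}\le 0$. So
\[
\mymat{A}{1}{1} = \alpha I_2 + s\begin{bmatrix} 0 & 1\\ 1 & 0\end{bmatrix} + k\begin{bmatrix} 0 & 1\\ -1 & 0\end{bmatrix},
\qquad s=\tfrac{\mymat{a}{1}{2}+\mymat{a}{2}{1}}{2},\quad k=\tfrac{\mymat{a}{1}{2}-\mymat{a}{2}{1}}{2}.
\]
Because the two off-diagonal entries have opposite signs, $|k| = (|\mymat{a}{1}{2}|+|\mymat{a}{2}{1}|)/2$. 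Under (\ref{order relation of A11 and A22}) we also have $|s| = (|\mymat{a}{1}{2}|-|\mymat{a}{2}{1}|)/2$. In particular $|k|+|s| = |\mymat{a}{1}{2}|$ and $|k|-|s| = |\mymat{a}{2}{1}|$.

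Next I conjugate each piece by $\wh{U}$. The identity part is unchanged. For the skew part, every $2\times 2$ orthogonal $\wh{U}$ satisfies $\wh{U}\trans J \wh{U} = \det(\wh{U})\, J$ with $J=\left[\begin{smallmatrix}0&1\\-1&0\end{smallmatrix}\right]$, so it becomes $\pm k J$. For the symmetric part, $\wh{U}$ is a rotation or reflection by some angle $\varphi$. A direct computation then gives $\wh{U}\trans \left[\begin{smallmatrix}0&1\\1&0\end{smallmatrix}\right]\wh{U}$ equal to a trace-free symmetric matrix whose off-diagonal entry is $\pm\cos 2\varphi$ and whose diagonal entries are $\pm\sin 2\varphi$. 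Hence
\[
\mymat{\wh{a}}{1}{2} = s c' + k', \qquad \mymat{\wh{a}}{2}{1} = s c' - k', \qquad |c'|\le 1,\ |k'| = |k|.
\]

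The four claims then follow by elementary inequalities. For (1), $\mymat{\wh{a}}{1}{2}-\mymat{\wh{a}}{2}{1} = 2k' = \pm(\mymat{a}{1}{2}-\mymat{a}{2}{1})$. For the others, the two moduli $|\mymat{\wh{a}}{1}{2}|$ and $|\mymat{\wh{a}}{2}{1}|$ are $|k|+|s c'|$ and $\bigl||k|-|s c'|\bigr|$; since $|s c'|\le|s|\le|k|$, the latter equals $|k|-|s c'|$. Therefore:
\begin{itemize}
\item their maximum is at most $|k|+|s| = |\mymat{a}{1}{2}|$, which gives (3);
\item their minimum is at least $|k|-|s| = |\mymat{a}{2}{1}|$, which gives (2);
\item their sum is exactly $2|k| = |\mymat{a}{1}{2}|+|\mymat{a}{2}{1}| \ge |\mymat{a}{1}{2}|$, which gives (4).
\end{itemize}

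The block $\mymat{A}{2}{2}$ is handled identically with $\wh{V}$ in place of $\wh{U}$, using $|\mymat{a}{4}{3}|\ge|\mymat{a}{3}{4}|$ so that the roles of the two off-diagonal positions are interchanged. The only step needing care is the sign bookkeeping: checking that $\det\wh{U}=\pm1$ only flips the sign of $k'$, and that the opposite-sign property of the Schur block is exactly what makes $|k|\ge|s|$. Everything else is routine.
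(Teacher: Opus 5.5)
Your argument is correct and follows essentially the paper's route. The paper derives this proposition from the appendix proof of Proposition~\ref{prop: estimate for smallest elements in U and V}, which takes $\hat U = \bigl[\begin{smallmatrix} \hat c & \hat s \\ -\hat s & \hat c \end{smallmatrix}\bigr]$ without loss of generality and computes $\hat a_{12} = \hat c^{2} a_{12} - \hat s^{2} a_{21}$ and $\hat a_{21} = \hat c^{2} a_{21} - \hat s^{2} a_{12}$; these are exactly your $s c' + k'$ and $s c' - k'$ with $c' = \hat c^{2} - \hat s^{2}$ and $k' = k$. The only difference is bookkeeping: since $a_{12}a_{21} < 0$, the paper's form gives $\lvert \hat a_{12} \rvert$ and $\lvert \hat a_{21} \rvert$ directly as the convex combinations $\hat c^{2}\lvert a_{12}\rvert + \hat s^{2}\lvert a_{21}\rvert$ and $\hat s^{2}\lvert a_{12}\rvert + \hat c^{2}\lvert a_{21}\rvert$, so (2)--(4) are immediate, whereas your split into $\lvert k\rvert \pm \lvert s c'\rvert$ together with $\hat U^{\top} J \hat U = \det(\hat U)\, J$ handles reflections explicitly rather than by flipping the sign of a column.
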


Finally, we present a bound for \( \lvert \mymat{a}{1}{1} - \mymat{a}{3}{3} \rvert \), the diagonal entry of \( \lvert T \rvert \). 

\begin{proposition} \label{prop: bound for diagonal entries of |T|}
    Suppose \( A \in \mathcal{S}_{4} \) satisfies (\ref{order relation of A11 and A22}). Then 
    \[
    \lvert \mymat{a}{1}{1} - \mymat{a}{3}{3} \rvert 
    \le \frac{ 1 + \epsilon_{r} }{ \wh{\sigma}_{1} } \lVert A \rVert 
    + \delta_{1} \lvert \mymat{a}{1}{2} \rvert 
    + \delta_{2} \lvert \mymat{a}{4}{3} \rvert .
    \] 
\end{proposition}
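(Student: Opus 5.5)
The plan is to read off the \((1,1)\) entry of the reduced residual equation~(\ref{eq: reduced residual equation}), \( \mymat{\wh{A}}{1}{1}\wh{\Sigma} - \wh{\Sigma}\mymat{\wh{A}}{2}{2} = \mymat{\wh{A}}{1}{2} \), and use the special structure of \( 2\times 2 \) blocks in real Schur form.

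First, since \( A \in \mathcal{S}_{4} \), we have \( \mymat{a}{2}{2} = \mymat{a}{1}{1} \) and \( \mymat{a}{1}{2}\mymat{a}{2}{1} \le 0 \). Hence I would write
\[
\mymat{A}{1}{1} = \mymat{a}{1}{1} I_{2} + N_{1}, \qquad N_{1} = \begin{bmatrix} 0 & \mymat{a}{1}{2} \\ \mymat{a}{2}{1} & 0 \end{bmatrix},
\]
and similarly \( \mymat{A}{2}{2} = \mymat{a}{3}{3} I_{2} + N_{2} \) with off-diagonal entries \( \mymat{a}{3}{4} \) and \( \mymat{a}{4}{3} \). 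Since \( \wh{U} \) and \( \wh{V} \) are orthogonal, this gives
\[
\mymat{\wh{A}}{1}{1} = \mymat{a}{1}{1} I_{2} + \wh{U}\trans N_{1}\wh{U}, \qquad \mymat{\wh{A}}{2}{2} = \mymat{a}{3}{3} I_{2} + \wh{V}\trans N_{2}\wh{V}.
\]

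Second, the \((1,1)\) entry of~(\ref{eq: reduced residual equation}) reads
\[
\wh{\sigma}_{1}\big( \mymat{a}{1}{1} - \mymat{a}{3}{3} \big) + \wh{\sigma}_{1}\big( \wh{u}_{1}\trans N_{1}\wh{u}_{1} - \wh{v}_{1}\trans N_{2}\wh{v}_{1} \big) = ( \mymat{\wh{A}}{1}{2} )_{11}.
\]
The right-hand side satisfies \( \lvert ( \mymat{\wh{A}}{1}{2} )_{11} \rvert \le \lVert \mymat{A}{1}{2} + R \rVert \le \lVert A \rVert + \lVert R \rVert = ( 1 + \epsilon_{r} )\lVert A \rVert \). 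If \( \wh{\sigma}_{1} = 0 \), the claimed bound is trivial (its right-hand side is infinite), so assume \( \wh{\sigma}_{1} > 0 \) and divide by it. Then
\[
\lvert \mymat{a}{1}{1} - \mymat{a}{3}{3} \rvert \le \frac{ 1 + \epsilon_{r} }{ \wh{\sigma}_{1} } \lVert A \rVert + \lvert \wh{u}_{1}\trans N_{1}\wh{u}_{1} \rvert + \lvert \wh{v}_{1}\trans N_{2}\wh{v}_{1} \rvert .
\]

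Third, I bound the two quadratic forms, which is the only step that needs care. A direct computation gives
\[
\wh{u}_{1}\trans N_{1}\wh{u}_{1} = \mymat{\wh{u}}{1}{1}\mymat{\wh{u}}{2}{1}\big( \mymat{a}{1}{2} + \mymat{a}{2}{1} \big).
\]
Because \( \mymat{a}{1}{2} \) and \( \mymat{a}{2}{1} \) have opposite signs (or one is zero), (\ref{order relation of A11 and A22}) gives \( \lvert \mymat{a}{1}{2} + \mymat{a}{2}{1} \rvert = \lvert \mymat{a}{1}{2} \rvert - \lvert \mymat{a}{2}{1} \rvert \le \lvert \mymat{a}{1}{2} \rvert \). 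Since \( \wh{U} \) is a \( 2\times 2 \) orthogonal matrix, the multiset \( \{ \lvert \mymat{\wh{u}}{i}{j} \rvert \} \) equals \( \{ \lvert \mymat{\wh{u}}{1}{1} \rvert, \lvert \mymat{\wh{u}}{2}{1} \rvert \} \), each taken twice. Hence \( \lvert \mymat{\wh{u}}{1}{1}\mymat{\wh{u}}{2}{1} \rvert \le \min\{ \lvert \mymat{\wh{u}}{1}{1} \rvert, \lvert \mymat{\wh{u}}{2}{1} \rvert \} = \delta_{1} \), using that both factors are at most \( 1 \). This yields \( \lvert \wh{u}_{1}\trans N_{1}\wh{u}_{1} \rvert \le \delta_{1}\lvert \mymat{a}{1}{2} \rvert \). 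The same argument, with \( \lvert \mymat{a}{4}{3} \rvert \ge \lvert \mymat{a}{3}{4} \rvert \), gives \( \lvert \wh{v}_{1}\trans N_{2}\wh{v}_{1} \rvert \le \delta_{2}\lvert \mymat{a}{4}{3} \rvert \). Combining these with the previous display proves the proposition.

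I expect no real obstacle. The main points to check are the sign argument, which uses the real Schur structure \( \mymat{a}{1}{2}\mymat{a}{2}{1} \le 0 \) rather than just (\ref{order relation of A11 and A22}), and the fact that \( \delta_{1} \) equals the smaller of the two first-column magnitudes of \( \wh{U} \).
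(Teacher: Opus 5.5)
Your proof is correct and is essentially the paper's argument. The paper also reads off \( \hat{a}_{13} = \hat{\sigma}_{1}(\hat{a}_{11} - \hat{a}_{33}) \) from the reduced residual equation, and it bounds the diagonal shifts by \( 2\lvert \hat{c}\hat{s}\rvert \,\lvert a_{12}+a_{21}\rvert \le 2\delta_{1}\lvert a_{12}\rvert \) (with the analogue for \( A_{22} \)). The only cosmetic difference is that the paper obtains \( \hat{a}_{11} - a_{11} = (\hat{a}_{11}-\hat{a}_{22})/2 \) via trace invariance, whereas you write this quantity directly as the quadratic form \( \hat{u}_{1}^{\top} N_{1} \hat{u}_{1} \).
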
 

By (\ref{entries of T}), we have \( \mymat{t}{i}{j} = 0 \) for \( i + j = 5 \). Assume \( \lvert \mymat{\wh{a}}{1}{2} \rvert \le \lvert \mymat{\wh{a}}{2}{1} \rvert \). By Propositions~\ref{prop: estimate for smallest elements in U and V}, \ref{prop: relation of entries between A11 and hatA11}~(4) and~\ref{prop: bound for diagonal entries of |T|}, for~\( i + j \ne 5 \), 
\begin{align*} 
    \lvert \mymat{t}{i}{j} \rvert \le 
    \begin{dcases}
        \frac{ 8( 1 + \epsilon_{r} ) }{ 3\wh{\sigma}_{1} } \lVert A \rVert, & i \ne j , \\[0.8ex]
        \frac{ ( 3 + 8\sqrt{2} ) ( 1 + \epsilon_{r} ) }{ 3\wh{\sigma}_{1} } \lVert A \rVert, & i = j .
    \end{dcases} 
\end{align*}  
By the structure of \( T \), it follows that 
\[
\lVert T \rVert_{\infty} 
= \lvert \mymat{a}{1}{1} - \mymat{a}{3}{3} \rvert + \lvert \mymat{a}{1}{2} \rvert + \lvert \mymat{a}{4}{3} \rvert 
\le \frac{ ( 19 + 8\sqrt{2} ) ( 1 + \epsilon_{r} ) }{ 3\wh{\sigma}_{1} } \lVert A \rVert . 
\]
By Lemma~\ref{lemma: norm inequalities} and (\ref{infinite norm bound for residual}), we obtain 
\begin{align} \label{bound 0 for contribution of residual}
    \wh{\phi}( A ) 
    \le 32 \rho_{4} \gamma_{12} \lVert T \rVert_{\infty} \cdot \wh{\calK} 
    \le \frac{ ( 2432 + 1024\sqrt{2} ) ( 1 + \epsilon_{r} ) }{3} \cdot \gamma_{12} \lVert A \rVert .
\end{align}
Therefore, \( \wh{\phi}( A ) \) has been bounded in the desired form as long as \( \lvert \mymat{\wh{a}}{1}{2} \rvert \le \lvert \mymat{\wh{a}}{2}{1} \rvert \).

Hence, under the conditions (\ref{order relation of A11 and A22}), (\ref{order realation between off-diagonal entries}) and \( \wh{\kappa} > 4 \), only two cases require further treatment, i.e., 
\[
\text{case 1: } \lvert \mymat{\wh{a}}{1}{2} \rvert > \lvert \mymat{\wh{a}}{2}{1} \rvert, \ \lvert \mymat{\wh{a}}{3}{4} \rvert \ge \lvert \mymat{\wh{a}}{4}{3} \rvert \dspace \text{and} \dspace 
\text{case 2: } \lvert \mymat{\wh{a}}{1}{2} \rvert > \lvert \mymat{\wh{a}}{2}{1} \rvert, \ \lvert \mymat{\wh{a}}{3}{4} \rvert < \lvert \mymat{\wh{a}}{4}{3} \rvert .
\]
We now discuss case 1 and case 2 separately. 

\subsection{Case 1} \label{subsec: case 1}
In this subsection, we make the following two assumptions:
\begin{align} 
    \label{case 1: assumption 1}
    &\eta_{1} 
    \ge 3, \\ 
    \label{case 1: assumption 2}
    &\lvert \mymat{a}{1}{2} \rvert 
    \ge \frac{ ( 4 + 8\sqrt{2} ) ( 1 + \epsilon_{r} ) }{ \wh{\calK} } \lVert A \rVert . 
\end{align} 
These assumptions follow from the fact that if any one of them fails, 
\begin{align} \label{bound for phi(A) in case 1}
    \wh{\phi} ( A ) 
    \le 32 \rho_{4} \gamma_{12} \lVert T \rVert_{\infty} \cdot \wh{\calK} 
    \le ( 2176 + 1536\sqrt{2} ) ( 1 + \epsilon_{r} ) \cdot \gamma_{12} \lVert A \rVert   
\end{align}
and the claim in~(\ref{bound for contribution of residual}) follows. See Appendix~\ref{app: proof of componentwise bounds of |T| for case 1}. 

\begin{proposition} \label{prop: componentwise bounds of |T| for case 1}
    Suppose \( A \in \mathcal{S}_{4} \) satisfies conditions (\ref{order relation of A11 and A22}), (\ref{order realation between off-diagonal entries}) and \( \wh{\kappa} > 4 \). For case 1, under the assumptions~(\ref{case 1: assumption 1})--(\ref{case 1: assumption 2}), entries in \( \lvert T \rvert \) admit the following bounds.
    \begin{itemize}
        \item Relative bounds 
        \begin{align}
            \label{case 1: relative bounds for entries of |T| -- 1}
            &\lvert \mymat{a}{2}{1} \rvert 
            \le \frac{1}{3} \lvert \mymat{a}{1}{2} \rvert, \dspace 
            \lvert \mymat{a}{3}{4} \rvert 
            \le \lvert \mymat{a}{4}{3} \rvert 
            \le \frac{ 2\sqrt{2} - 1 }{7} \lvert \mymat{a}{1}{2} \rvert, \\
            \label{case 1: relative bounds for entries of |T| -- 2}
            &\lvert \mymat{a}{1}{1} - \mymat{a}{3}{3} \rvert 
            \le \frac{3}{4} \lvert \mymat{a}{1}{2} \rvert . 
        \end{align}

        \item Absolute bounds 
        \begin{align}
            \label{case 1: bound for |a12|}
            &\lvert \mymat{a}{1}{2} \rvert 
            \le \frac{ 2 ( 1 + \epsilon_{r} ) }{\wh{\calK}} \lVert A \rVert \cdot ( \eta_{1} + 1 ), \\ 
            \label{case 1: bound for |a21|}
            &\lvert \mymat{a}{2}{1} \rvert 
            \le \frac{ 2 ( 1 + \epsilon_{r} ) }{\wh{\calK}} \lVert A \rVert, \\ 
            \label{case 1: bound for |a34| and |a43|}
            &\lvert \mymat{a}{3}{4} \rvert \le \lvert \mymat{a}{4}{3} \rvert 
            \le \frac{ 4 ( 1 + \epsilon_{r} ) }{\wh{\calK}} \lVert A \rVert, \\
            \label{case 1: bound for (a11-a33)^2/|a12|}
            &\frac{ ( \mymat{a}{1}{1} - \mymat{a}{3}{3} )^{2} }{ \lvert \mymat{a}{1}{2} \rvert } 
            \le \frac{ ( 9 + 2\sqrt{2} ) ( 1 + \epsilon_{r} ) }{ 2\wh{\calK} } \lVert A \rVert .
        \end{align}
    \end{itemize}
\end{proposition}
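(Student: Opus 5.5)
We would derive all of these bounds from the reduced equation (\ref{eq: reduced residual equation}), $\mymat{\wh{A}}{1}{1}\wh{\Sigma} - \wh{\Sigma}\mymat{\wh{A}}{2}{2} = \mymat{\wh{A}}{1}{2}$, read entrywise. Two further ingredients are Propositions~\ref{prop: estimate for tau}--\ref{prop: bound for diagonal entries of |T|} and the two case-1 assumptions.

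\emph{Off-diagonal entries.} The $(1,2)$ and $(2,1)$ entries of the reduced equation give
\[
\mymat{\wh{a}}{1}{2}\wh{\sigma}_{2} - \wh{\sigma}_{1}\mymat{\wh{a}}{3}{4} = (\mymat{\wh{A}}{1}{2})_{12}, \qquad \mymat{\wh{a}}{2}{1}\wh{\sigma}_{1} - \wh{\sigma}_{2}\mymat{\wh{a}}{4}{3} = (\mymat{\wh{A}}{1}{2})_{21}.
\]
The right-hand sides are bounded by $(1+\epsilon_r)\lVert A\rVert$. In case 1 we have $\lvert \mymat{\wh{a}}{4}{3}\rvert \le \lvert\mymat{\wh{a}}{3}{4}\rvert \le \tau$, and Proposition~\ref{prop: estimate for tau} gives $\tau \le 2(1+\epsilon_r)\lVert A\rVert\eta_1/\wh{\calK}$. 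Dividing by $\wh{\sigma}_1$ yields $\lvert\mymat{\wh{a}}{2}{1}\rvert \lesssim (1+\epsilon_r)\lVert A\rVert/\wh{\sigma}_1 + \wh{\kappa}^{-1}\tau$. Since $\mymat{\wh{a}}{1}{2}=\eta_1\mymat{\wh{a}}{2}{1}$ in magnitude, this gives $\lvert\mymat{\wh{a}}{2}{1}\rvert \le 2(1+\epsilon_r)\lVert A\rVert/\wh{\calK}$. The extra $\wh\sigma_2$ is absorbed using $\wh{\calK}=\wh\sigma_1/\sqrt{1+2\wh\sigma_2^2}$ together with the first equation, which we solve for $\mymat{\wh{a}}{3}{4}$.

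We then transfer to $A$. Proposition~\ref{prop: relation of entries between A11 and hatA11}(2) gives (\ref{case 1: bound for |a21|}), and (4) gives $\lvert\mymat{a}{1}{2}\rvert \le \lvert\mymat{\wh a}{1}{2}\rvert+\lvert\mymat{\wh a}{2}{1}\rvert = (\eta_1+1)\lvert\mymat{\wh a}{2}{1}\rvert$, hence (\ref{case 1: bound for |a12|}). For (\ref{case 1: bound for |a34| and |a43|}) we use the analogue for $\mymat{A}{2}{2}$: $\lvert\mymat{a}{4}{3}\rvert \le \lvert\mymat{\wh a}{3}{4}\rvert+\lvert\mymat{\wh a}{4}{3}\rvert$. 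Bounding $\mymat{\wh a}{3}{4}$ from the first equation as $(\lvert\mymat{\wh a}{1}{2}\rvert\wh\sigma_2+(1+\epsilon_r)\lVert A\rVert)/\wh\sigma_1$ gives $\lesssim 2(1+\epsilon_r)\lVert A\rVert/\wh{\calK}$, and the same holds for $\mymat{\wh a}{4}{3}$.

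\emph{Relative bounds.} These follow from the absolute ones combined with assumption (\ref{case 1: assumption 2}), $\lvert\mymat{a}{1}{2}\rvert \ge (4+8\sqrt2)(1+\epsilon_r)\lVert A\rVert/\wh{\calK}$:
\[
\lvert\mymat{a}{4}{3}\rvert/\lvert\mymat{a}{1}{2}\rvert \le 4/(4+8\sqrt2) = (2\sqrt2-1)/7 .
\]
For $\lvert\mymat{a}{2}{1}\rvert\le\frac13\lvert\mymat{a}{1}{2}\rvert$ we need more than the absolute bounds. Proposition~\ref{prop: relation of entries between A11 and hatA11}(1)--(3) with $\eta_1\ge3$ should do it: $\lvert\mymat{a}{1}{2}\rvert\ge\lvert\mymat{\wh a}{1}{2}\rvert=\eta_1\lvert\mymat{\wh a}{2}{1}\rvert\ge 3\lvert\mymat{a}{2}{1}\rvert$.

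\emph{Diagonal entry.} We start from Proposition~\ref{prop: bound for diagonal entries of |T|}, $\lvert \mymat{a}{1}{1}-\mymat{a}{3}{3}\rvert \le (1+\epsilon_r)\lVert A\rVert/\wh\sigma_1 + \delta_1\lvert\mymat{a}{1}{2}\rvert+\delta_2\lvert\mymat{a}{4}{3}\rvert$, and use $\delta_1\le(1+\eta_1)^{-1/2}\le 1/2$ from Proposition~\ref{prop: estimate for smallest elements in U and V}. Then (\ref{case 1: assumption 2}) bounds the first and third terms by small multiples of $\lvert\mymat{a}{1}{2}\rvert$. Getting the constant exactly $3/4$ may require a sharper argument: $\mymat{a}{1}{1}-\mymat{a}{3}{3}$ is the common diagonal difference, equal to the trace relation from the diagonal entries of the reduced equation. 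Squaring gives (\ref{case 1: bound for (a11-a33)^2/|a12|}), in the form
\[
(\mymat{a}{1}{1}-\mymat{a}{3}{3})^2/\lvert\mymat{a}{1}{2}\rvert \le 2\big((1+\epsilon_r)\lVert A\rVert/\wh\sigma_1+\delta_2\lvert\mymat a43\rvert\big)^2/\lvert\mymat a12\rvert + 2\delta_1^2\lvert\mymat a12\rvert,
\]
with $\delta_1^2\lvert\mymat{a}{1}{2}\rvert \le \lvert\mymat{a}{1}{2}\rvert/(1+\eta_1)$, which by (\ref{case 1: bound for |a12|}) is $\le 2(1+\epsilon_r)\lVert A\rVert/\wh{\calK}$. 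The cross terms are controlled via (\ref{case 1: assumption 2}). This last bound is the main obstacle: to reach the stated constant $(9+2\sqrt2)/2$ we must avoid the crude factor 2 from squaring a sum. We would instead expand the square exactly and bound each of the three resulting terms separately using the tailored inequalities above.
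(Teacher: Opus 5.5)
Your treatment of the off-diagonal entries follows the paper. It uses Proposition~\ref{prop: estimate for tau} with Proposition~\ref{prop: relation of entries between A11 and hatA11}~(2),~(4) for \( \mymat{a}{2}{1} \) and \( \mymat{a}{1}{2} \), then \( \eta_{1} \ge 3 \) with (2)--(3) for the ratio \( 1/3 \), and assumption~(\ref{case 1: assumption 2}) for the ratio \( (2\sqrt{2}-1)/7 \).

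The detour through the \( (2,1) \) entry of the reduced equation is not needed. In case~1, \( \tau = \lvert \mymat{\wh{a}}{1}{2} \rvert = \eta_{1}\lvert \mymat{\wh{a}}{2}{1} \rvert \), so Proposition~\ref{prop: estimate for tau} gives \( \lvert \mymat{\wh{a}}{2}{1} \rvert \le 2(1+\epsilon_{r})\lVert A \rVert/\wh{\calK} \) directly.

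For \( \mymat{\wh{a}}{3}{4} \) you differ slightly from the paper. The paper splits according to whether \( \wh{\sigma}_{1}\lvert \mymat{\wh{a}}{3}{4} \rvert \le 2\wh{\sigma}_{2}\lvert \mymat{\wh{a}}{1}{2} \rvert \). Your single triangle inequality on the \( (1,2) \) entry also works and is simpler, provided you state two facts explicitly:
\begin{itemize}
\item \( \lvert \mymat{\wh{a}}{1}{2} \rvert \le \lVert \mymat{\wh{A}}{1}{1} \rVert = \lVert \mymat{A}{1}{1} \rVert \le \lVert A \rVert \);
\item \( \wh{\calK} \le \wh{\kappa} \wedge \wh{\sigma}_{1} \).
\end{itemize}
These give \( \lvert \mymat{\wh{a}}{4}{3} \rvert \le \lvert \mymat{\wh{a}}{3}{4} \rvert \le (2+\epsilon_{r})\lVert A \rVert/\wh{\calK} \), and hence (\ref{case 1: bound for |a34| and |a43|}).

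The real shortfall is at the end. You leave the constants \( 3/4 \) and \( (9+2\sqrt{2})/2 \) open and look for a sharper argument, but none is needed. The missing ingredient is \( \delta_{2} \le (1+\eta_{2})^{-1/2} \le \sqrt{2}/2 \), which holds because \( \eta_{2} \ge 1 \) by definition. With only \( \delta_{2} \le 1 \), the crude estimate gives \( \tfrac{1}{2} + 5/(4+8\sqrt{2}) \approx 0.83 \), which exceeds \( 3/4 \). With it, (\ref{case 1: bound for |a34| and |a43|}) and (\ref{case 1: assumption 2}) give
\[
\frac{1+\epsilon_{r}}{\wh{\sigma}_{1}}\lVert A \rVert + \delta_{2}\lvert \mymat{a}{4}{3} \rvert
\le \frac{(1+2\sqrt{2})(1+\epsilon_{r})}{\wh{\calK}}\lVert A \rVert
\le \frac{1}{4}\lvert \mymat{a}{1}{2} \rvert .
\]
Then \( \delta_{1} \le 1/2 \) finishes (\ref{case 1: relative bounds for entries of |T| -- 2}).

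Your worry about the factor \( 2 \) in \( (a+b)^{2} \le 2(a^{2}+b^{2}) \) is also unfounded. Insert the same bound into your own displayed inequality:
\begin{itemize}
\item the first term is at most \( 2(1+2\sqrt{2})^{2}(1+\epsilon_{r})^{2}\lVert A \rVert^{2}/(\wh{\calK}^{2}\lvert \mymat{a}{1}{2} \rvert) \), which by (\ref{case 1: assumption 2}) is at most \( (1+2\sqrt{2})(1+\epsilon_{r})\lVert A \rVert/(2\wh{\calK}) \);
\item the second term is at most \( 4(1+\epsilon_{r})\lVert A \rVert/\wh{\calK} \).
\end{itemize}
The sum is exactly \( (9+2\sqrt{2})(1+\epsilon_{r})\lVert A \rVert/(2\wh{\calK}) \), which is the paper's computation. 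Expanding the square exactly or invoking the trace relation is unnecessary, and your display has no cross terms left to control.
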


\textbf{The first step of GECP.}
Notice that the entry \( \mymat{a}{1}{2} \) appears at both positions \( ( 1, 2 ) \) and \( ( 3, 4 ) \), so either can be chosen as the first pivot. Indeed, we will show that \( \mymat{a}{1}{2} \) in \( ( 1, 2 ) \) and \( ( 3, 4 ) \) must be chosen as pivots in the first two steps of GECP. Without loss of generality, assume \( \mymat{a}{1}{2} \) in \( ( 1, 2 ) \) is chosen as the first pivot. Then after the first pivoting, 
\[
\wh{P}_{1} T \wh{Q}_{1}\trans = 
\begin{bNiceMatrix}[first-row, first-col, margin=4pt] 
    & \scriptstyle 2 & \scriptstyle 1 & \scriptstyle 3 & \scriptstyle 4 \\
    \scriptstyle 1 & \mymat{a}{1}{2} & \mymat{a}{1}{1} - \mymat{a}{3}{3} & -\mymat{a}{4}{3} & 0 \\
    \scriptstyle 2 & \mymat{a}{1}{1} - \mymat{a}{3}{3} & \mymat{a}{2}{1} & 0 & -\mymat{a}{4}{3} \\
    \scriptstyle 3 & 0 & -\mymat{a}{3}{4} & \mymat{a}{1}{1} - \mymat{a}{3}{3} & \mymat{a}{1}{2} \\
    \scriptstyle 4 & -\mymat{a}{3}{4} & 0 & \mymat{a}{2}{1} & \mymat{a}{1}{1} - \mymat{a}{3}{3}
\end{bNiceMatrix} ,
\]
in which \( \wh{P}_{1} = I_{4} \) and \( \wh{Q}_{1}\trans \) swaps columns \( 1 \) and \( 2 \).
Applying \( L_{1} = I_{4} - l_{1} e_{1}\trans \) to \( \wh{P}_{1}T \wh{Q}_{1}\trans \), where \( e_{i} \) is the \( i \)th column of \( I_{4} \) and 
\[
l_{1} = [0, ( \mymat{a}{1}{1} - \mymat{a}{3}{3} ) \mymat[-1]{a}{1}{2}, 0, -\mymat{a}{3}{4}\mymat[-1]{a}{1}{2}]\trans , 
\]
we obtain \( \wh{T}^{(1)} = ( \mymat[(1)]{\wh{t}}{i}{j} ) = \sffl( L_{1}\wh{P}_{1}T\wh{Q}_{1}\trans ) \). 

We track the growth of the entry initially at position \( ( i, j ) \) in \( T \) as it is moved by pivoting. Due to zero entries in \( (1, 4) \) and \( (3, 2) \), only entries in \( (2, 1) \), \( (2, 3) \), \( (4, 1) \) and \( (4, 3) \) need to be updated. By Proposition~\ref{prop: componentwise bounds of |T| for case 1}, \( \lvert \mymat[(1)]{\wh{t}}{i}{j} \rvert \) in these positions can be bounded as follows. Since  \( \gamma_{k} \ll 1 \) in these settings, factors of the form \( ( 1 + \gamma_{k} ) \cdot C \) are all replaced by \( C \) for simplicity. 
\begin{align*}
    ( 2, 1 ) \colon \lvert \mymat[(1)]{\wh{t}}{2}{2} \rvert 
    &\le ( 1 + \gamma_{5} ) \cdot \big( \lvert \mymat{a}{2}{1} \rvert + ( \mymat{a}{1}{1} - \mymat{a}{3}{3} )^{2} \lvert \mymat[-1]{a}{1}{2} \rvert \big) 
    \le \frac{ ( 13 + 2\sqrt{2} ) ( 1 + \epsilon_{r} ) }{ 2\wh{\calK} } \lVert A \rVert, \\
    ( 2, 3 ) \colon \lvert \mymat[(1)]{\wh{t}}{2}{3} \rvert 
    &\le ( 1 + \gamma_{3} ) \cdot \lvert ( ( \mymat{a}{1}{1} - \mymat{a}{3}{3}) \mymat[-1]{a}{1}{2} ) \mymat{a}{4}{3} \rvert 
    \le \frac{ 3 ( 1 + \epsilon_{r} ) }{ \wh{\calK} } \lVert A \rVert, \\
    ( 4, 1 ) \colon \lvert \mymat[(1)]{\wh{t}}{4}{2} \rvert 
    &\le ( 1 + \gamma_{3} ) \cdot \lvert \mymat{a}{3}{4} ( \mymat[-1]{a}{1}{2} (\mymat{a}{1}{1} - \mymat{a}{3}{3}) ) \rvert 
    \le \frac{ 3 ( 1 + \epsilon_{r} ) }{ \wh{\calK} } \lVert A \rVert, \\
    ( 4, 3 ) \colon \lvert \mymat[(1)]{\wh{t}}{4}{3} \rvert 
    &\le ( 1 + \gamma_{3}) \cdot \big( \lvert \mymat{a}{2}{1} \rvert + \lvert \mymat{a}{3}{4} ( \mymat[-1]{a}{1}{2} \mymat{a}{4}{3} ) \rvert \big)
    \le \frac{ ( 10 + 8\sqrt{2} ) ( 1 + \epsilon_{r} ) }{ 7\wh{\calK} } \lVert A \rVert .
\end{align*}  
Meanwhile, it is not hard to show that \( \lvert \mymat{a}{1}{2} \rvert \) in \( (3, 4) \) will be the largest entry in \( \lvert \wh{T}^{(1)}( 2 \tightcolon 4, 2 \tightcolon 4 ) \rvert \) by referring to~(\ref{case 1: relative bounds for entries of |T| -- 1})--(\ref{case 1: relative bounds for entries of |T| -- 2}). For brevity, we omit details.

\textbf{The second step of GECP.} After the second pivoting, we have 
\[
\wh{P}_{2} \wh{T}^{(1)} \wh{Q}_{2}\trans = 
\begin{bNiceMatrix}[first-row, first-col, margin=4pt] 
    & \scriptstyle 2 & \scriptstyle 4 & \scriptstyle 3 & \scriptstyle 1 \\
    \scriptstyle 1 & \mymat{a}{1}{2} & 0 & -\mymat{a}{4}{3} & \mymat{a}{1}{1} - \mymat{a}{3}{3} \\
    \scriptstyle 3 & 0 & \mymat{a}{1}{2} & \mymat{a}{1}{1} - \mymat{a}{3}{3} & -\mymat{a}{3}{4} \\
    \scriptstyle 2 & 0 & -\mymat{a}{4}{3} & \mymat[(1)]{\wh{t}}{2}{3} & \mymat[(1)]{\wh{t}}{2}{2} \\
    \scriptstyle 4 & 0 & \mymat{a}{1}{1} - \mymat{a}{3}{3} & \mymat[(1)]{\wh{t}}{4}{3} & \mymat[(1)]{\wh{t}}{4}{2}
\end{bNiceMatrix} ,
\] 
in which \( \wh{P}_{2} \) swaps rows \( 2 \) and \( 3 \), \( \wh{Q}_{2}\trans \) swaps columns \( 2 \) and \( 4 \). 
Then applying \( L_{2} = I_{4} - l_{2}e_{2}\trans \) to \( \wh{P}_{2}\wh{T}^{(1)}\wh{Q}_{2}\trans \), where 
\[
l_{2}
= [0, 0, -\mymat{a}{4}{3}\mymat[-1]{a}{1}{2}, (\mymat{a}{1}{1} - \mymat{a}{3}{3})\mymat[-1]{a}{1}{2}]\trans ,
\] 
we obtain \( \wh{T}^{(2)} = (\mymat[(2)]{\wh{t}}{i}{j} ) = \sffl( L_{2}\wh{P}_{2}\wh{T}^{(1)}\wh{Q}_{2}\trans ) \).

Entries in \( (2, 1) \), \( (2, 3) \), \( (4, 1) \) and \( (4, 3) \) will be updated again. Bounds for \( \lvert \mymat[(2)]{\wh{t}}{i}{j} \rvert \) in these positions are as follows:
\begin{align*}
    (2, 1) \colon \lvert \mymat[(2)]{\wh{t}}{3}{4} \rvert
    &\le ( 1 + \gamma_{3} ) \cdot \big( \lvert \mymat[(1)]{\wh{t}}{2}{2} \rvert + \lvert ( \mymat{a}{4}{3}\mymat[-1]{a}{1}{2} ) \mymat{a}{3}{4} \rvert \big)
    \le \frac{ ( 83 + 30\sqrt{2} ) ( 1 + \epsilon_{r} ) }{ 14\wh{\calK} } \lVert A \rVert, \\
    (2, 3) \colon \lvert \mymat[(2)]{\wh{t}}{3}{3} \rvert 
    &\le ( 1 + \gamma_{4} ) \cdot \big( \lvert \mymat[(1)]{\wh{t}}{2}{3} \rvert + \lvert \mymat{a}{4}{3} ( \mymat[-1]{a}{1}{2} ( \mymat{a}{1}{1} - \mymat{a}{3}{3} ) ) \rvert \big)
    \le \frac{ 6 ( 1 + \epsilon_{r} ) }{\wh{\calK}} \lVert A \rVert, \\
    (4, 1) \colon \lvert \mymat[(2)]{\wh{t}}{4}{4} \rvert 
    &\le ( 1 + \gamma_{4} ) \cdot \big( \lvert \mymat[(1)]{\wh{t}}{4}{2} \rvert + \lvert ( ( \mymat{a}{1}{1} - \mymat{a}{3}{3} ) \mymat[-1]{a}{1}{2} ) \mymat{a}{3}{4} \rvert \big)
    \le \frac{ 6 ( 1 + \epsilon_{r} ) }{ \wh{\calK} } \lVert A \rVert, \\
    (4, 3) \colon \lvert \mymat[(2)]{\wh{t}}{4}{3} \rvert
    &\le ( 1 + \gamma_{5} ) \cdot \big( \lvert \mymat[(1)]{\wh{t}}{4}{3} \rvert + ( \mymat{a}{1}{1} - \mymat{a}{3}{3} )^{2} \lvert \mymat[-1]{a}{1}{2} \rvert \big)
    \le \frac{ ( 83 + 30\sqrt{2} ) ( 1 + \epsilon_{r} ) }{ 14 \wh{\calK} } \lVert A \rVert . 
\end{align*}

\textbf{The third step of GECP.}
Now it is not clear which is the largest entry of \( \lvert \wh{T}^{(2)}( 3 \tightcolon 4, 3 \tightcolon 4 ) \rvert \). 
However, by complete pivoting, the resulting update is bounded by the third largest entry of \( \lvert \wh{T}^{(2)}( 3 \tightcolon 4, 3 \tightcolon 4 ) \rvert \). (We ignore the factor \( 1 + \gamma_{k} \) as before.)
Thus, in the third step of GECP, the update is at most 
\begin{align*} 
    \frac{ 6 ( 1 + \epsilon_{r} ) }{ \wh{\calK} } \lVert A \rVert .
\end{align*} 

We summarize our results as follows. Notice that by the statement \( \mymat{s}{i}{j} = \lvert \mymat{t}{i}{j} \rvert \) we mean that the relative error between \( \mymat{s}{i}{j}  \) and \( \lvert \mymat{t}{i}{j} \rvert \) is at most \( \gamma_{3} \), a number that is small enough to be negligible in the analysis.
\begin{itemize}
    \item For \( (i, j) \notin \{ ( 2, 1 ), ( 2, 3 ), ( 4, 1 ), ( 4, 3 ) \} \), \( \mymat{s}{i}{j} = \lvert \mymat{t}{i}{j} \rvert \).

    \item For \( (i, j) \in \{ (2, 1), (4, 3) \} \), 
    \begin{align*} 
        \notag \mymat{s}{i}{j} 
        &\le \lvert \mymat{a}{2}{1} \rvert + 2 \Big( ( \mymat{a}{1}{1} - \mymat{a}{3}{3} )^{2} \lvert \mymat[-1]{a}{1}{2} \rvert + \lvert \mymat{a}{3}{4} \mymat[-1]{a}{1}{2} \mymat{a}{4}{3} \rvert + \frac{ 6( 1 + \epsilon_{r} ) } { \wh{\calK} } \lVert A \rVert \Big) \\
        &\le \frac{ ( 153 + 30\sqrt{2} ) ( 1 + \epsilon_{r} ) }{ 7\wh{\calK} } \lVert A \rVert . 
    \end{align*}

   \item For \( (i, j) \in \{ (2, 3), (4, 1) \} \), 
    \begin{align*} 
        \notag \mymat{s}{i}{j} 
        &\le 2\Big( \lvert (\mymat{a}{1}{1} - \mymat{a}{3}{3}) \mymat[-1]{a}{1}{2} \mymat{a}{4}{3} \rvert + \frac{ 6 ( 1 + \epsilon_{r} ) }{ \wh{\calK} } \lVert A \rVert \Big) \\
        &\le \frac{ 18 ( 1 + \epsilon_{r} ) }{ \wh{\calK} } \lVert A \rVert . 
    \end{align*}
\end{itemize}

We are now prepared to bound \( \wh{\phi}( A ) \). Notice that \( \wh{X} \) can be split into two parts:
\begin{align*} 
\wh{X} 
= \wh{U}\wh{\Sigma}\wh{V}\trans
= \wh{U}\begin{bmatrix}
    \wh{\sigma}_{1} & 0 \\
                 0  & 0
\end{bmatrix} \wh{V}\trans
+ \wh{U}\begin{bmatrix}
    0  & 0 \\
    0  & \wh{\sigma}_{2}
\end{bmatrix}\wh{V}\trans \coloneqq \wh{X}_{1} + \wh{X}_{2} .
\end{align*} 
Using Lemma~\ref{lemma: norm inequalities}, the contribution of \( \wh{X}_{2} \) to the bound of \( \wh{\phi}( A ) \) is bounded by  
\begin{align} \label{contribution of X_2}
    64 \gamma_{12} \lVert T \rVert \cdot \frac{ \lVert \sfvec( \wh{X}_{2} ) \rVert }{ \sqrt{ 1 + 2\wh{\sigma}_{2}^{2} } } 
    \le 64\sqrt{2} \, \gamma_{12} \lVert A \rVert .
\end{align} 

Next we bound the contribution of \( \wh{X}_{1} \) to \( \wh{\phi}( A ) \). For convenience, we denote \( \wh{X}_{1} \) by \( \wh{X} \). Then
\[
\lvert \mymat{\wh{X}}{i}{j} \rvert 
= \Big\lvert ( \wh{U}\trans e_{i} )\trans 
\begin{bmatrix}
    \wh{\sigma}_{1} & 0 \\
    0 & 0
\end{bmatrix} ( \wh{V} e_{j} ) \Big\rvert 
\le \lvert \mymat{\wh{u}}{i}{1} \mymat{\wh{v}}{1}{j} \rvert \wh{\sigma}_{1}, \dspace 1 \le i, j \le 2 . 
\]
From (\ref{positions of smaller entries of U and V}), it follows that 
\begin{align} \label{case 1: component bound for x}
    \lvert \wh{x} \rvert 
    \le [ \delta_{2} \wh{\sigma}_{1}, \delta_{1} \delta_{2} \wh{\sigma}_{1}, \wh{\sigma}_{1}, \delta_{1} \wh{\sigma}_{1} ]\trans .  
\end{align}
For \( r = \sfvec(R) = [ r_{1}, r_{2}, r_{3}, r_{4} ]\trans \), we have
\begin{align} \label{case 1: component bound for uRv}
    \lvert \wh{u}_{2}\trans R\wh{v}_{2} \rvert 
    \le \delta_{1} \lvert r_{1} \rvert + \lvert r_{2} \rvert + \delta_{1} \delta_{2} \lvert r_{3} \rvert + \delta_{2} \lvert r_{4} \rvert .
\end{align} 
Remember that by Proposition~\ref{prop: estimate for smallest elements in U and V},  
\[
\delta_{1} \le \frac{1}{ \sqrt{ 1 + \eta_{1} } }, \dspace 
\delta_{2} \le \frac{1}{ \sqrt{ 1 + \eta_{2} } } , 
\]
in which \( \eta_{1} \ge 3 \) and \( \eta_{2} \ge 1 \). Combining these with~(\ref{componentwise bound for residual}), (\ref{case 1: component bound for x}) and~(\ref{case 1: component bound for uRv}), the contribution of each \( \mymat{s}{i}{j} \) to \( \wh{\phi}( A ) \) is bounded as follows: 
\[
\begin{bNiceMatrix}[first-row, first-col, margin=4pt] 
    & \scriptstyle \delta_{2} \wh{\sigma}_{1} & \scriptstyle \delta_{1} \delta_{2} \wh{\sigma}_{1} & \scriptstyle \wh{\sigma}_{1} & \scriptstyle \delta_{1}\wh{\sigma}_{1} \\
    \scriptstyle \delta_{1} & \mymat{s}{1}{1} & \mymat{s}{1}{2} & \mymat{s}{1}{3} & \mymat{s}{1}{4} \\
    \scriptstyle 1 & \mymat{s}{2}{1} & \mymat{s}{2}{2} & \mymat{s}{2}{3} & \mymat{s}{2}{4} \\
    \scriptstyle \delta_{1} \delta_{2} & \mymat{s}{3}{1} & \mymat{s}{3}{2} & \mymat{s}{3}{3} & \mymat{s}{3}{4} \\
    \scriptstyle \delta_{2} & \mymat{s}{4}{1} & \mymat{s}{4}{2} & \mymat{s}{4}{3} & \mymat{s}{4}{4}
\end{bNiceMatrix} 
\]
\begin{itemize} 
    \item For \( (i, j) \in \{ (1, 2), (3, 4) \} \), \( \mymat{s}{i}{j} = \lvert \mymat{a}{1}{2} \rvert \). Then the contribution of two \( \mymat{s}{i}{j} \) is bounded by
    \begin{align} \label{case 1: contribution of a12}
        2 \gamma_{12} \delta_{1}^{2} \delta_{2} \lvert \mymat{a}{1}{2} \rvert  \cdot \wh{\calK}
        \le 2 \sqrt{2} ( 1 + \epsilon_{r} ) \cdot \gamma_{12} \lVert A \rVert. 
    \end{align}
    
    \item For \( (i, j) \in \{ (1, 1), (2, 2), (3, 3), (4, 4) \} \), \( \mymat{s}{i}{j} = \lvert \mymat{a}{1}{1} - \mymat{a}{3}{3} \rvert \). By Proposition~\ref{prop: bound for diagonal entries of |T|}, the contribution of four \( \mymat{s}{i}{j} \) is bounded by 
    \begin{align} \label{case 1: contribution of a11 - a33}
        \notag 4 \gamma_{12} \delta_{1} \delta_{2} \lvert \mymat{a}{1}{1} - \mymat{a}{3}{3} \rvert \cdot \wh{\calK}
        &\le 4 \gamma_{12} \Big( \frac{ 1 + \epsilon_{r} }{ \wh{\sigma}_{1} }  \delta_{1} \delta_{2} \lVert A \rVert + \delta_{1}^{2} \delta_{2} \lvert \mymat{a}{1}{2} \rvert + \delta_{1} \delta_{2}^{2} \lvert \mymat{a}{4}{3} \rvert \Big) \cdot \calK \\
        &\le ( 4 + 5\sqrt{2} ) ( 1 + \epsilon_{r} ) \cdot \gamma_{12} \lVert A \rVert .
    \end{align}

    \item For \( ( i, j ) \in \{ ( 1, 3 ), ( 2, 4 ) \} \), \( \mymat{s}{i}{j} = \lvert \mymat{a}{4}{3} \rvert \). Then the contribution of two \( \mymat{s}{i}{j} \) is bounded by 
    \begin{align} \label{case 1: contribution of a43}
        2\gamma_{12} \delta_{1} \lvert \mymat{a}{4}{3} \rvert \cdot \wh{\calK} 
        \le 4( 1 + \epsilon_{r} ) \cdot \gamma_{12} \lVert A \rVert . 
    \end{align}

    \item For \( ( i, j ) \in \{ ( 3, 1 ), ( 4, 2 ) \} \), \( \mymat{s}{i}{j} = \lvert \mymat{a}{3}{4} \rvert \). Then the contribution of two \( \mymat{s}{i}{j} \) is bounded by 
    \begin{align} \label{case 1: contribution of a34}
        2\gamma_{12} \delta_{1} \delta_{2}^{2} \lvert \mymat{a}{3}{4} \rvert \cdot \wh{\calK} 
        \le 2( 1 + \epsilon_{r} ) \cdot \gamma_{12} \lVert A \rVert .
    \end{align}

    \item For \( ( i, j ) \in \{ ( 2, 1 ), ( 4, 3 ) \} \), the contribution of two \( \mymat{s}{i}{j} \) is bounded by 
    \begin{align} \label{case 1: contribution of s21 and s43}
        2\gamma_{12} \delta_{2} \mymat{s}{i}{j} \cdot \wh{\calK} 
        \le \frac{ ( 60 + 153\sqrt{2} ) ( 1 + \epsilon_{r} ) }{7} \cdot \gamma_{12} \lVert A \rVert . 
    \end{align}

    \item For \( ( i, j ) \in \{ ( 2, 3 ), ( 4, 1 ) \} \), the contribution of two \( \mymat{s}{i}{j} \) is bounded by 
    \begin{align} \label{case 1: contribution of s23 and s41}
        \gamma_{12} ( \delta_{2}^{2} + 1 ) \mymat{s}{i}{j} \cdot \wh{\calK} 
        \le 27( 1 + \epsilon_{r} ) \cdot \gamma_{12} \lVert A \rVert . 
    \end{align}

    \item The remaining \( \mymat{s}{i}{j} = 0 \). 
\end{itemize} 
By summing the bounds~(\ref{contribution of X_2}) and~(\ref{case 1: contribution of a12})--(\ref{case 1: contribution of s23 and s41}), we obtain  
\begin{align} \label{smaller bound for phi(A) in case 1}
\wh{\phi} ( A ) 
\le \Big( \frac{ 319 + 202\sqrt{2} }{ 7 } ( 1 + \epsilon_{r} ) + 64 \sqrt{2} \Big) \cdot \gamma_{12} \lVert A \rVert  
\end{align}
under assumptions~(\ref{case 1: assumption 1})--(\ref{case 1: assumption 2}). This bound for \( \wh{\phi}( A ) \) is smaller than the bound given in (\ref{bound for phi(A) in case 1}). Thus, we conclude that (\ref{bound for phi(A) in case 1}) is always valid for case~1. 

\subsection{Case 2} \label{subsec: case 2} 
In this subsection, we make the following two assumptions:
\begin{align} 
    \label{case 2: assumption 1}
    &\eta_{1} \ge 8, \\ 
    \label{case 2: assumption 2}
    &\lvert \mymat{a}{1}{2} \rvert \ge \frac{ 16 ( 1 + \epsilon_{r} ) }{ \wh{\calK} } \lVert A \rVert . 
\end{align} 
And if any one of them fails, 
\begin{align} \label{bound for phi(A) in case 2}
    \wh{\phi} ( A ) 
    \le 32 \rho_{4} \gamma_{12} \lVert T \rVert_{\infty} \cdot \wh{\calK} 
    \le ( 6272 + 3072\sqrt{2} ) ( 1 + \epsilon_{r} ) \cdot \gamma_{12} \lVert A \rVert . 
\end{align} 
See Appendix~\ref{app: proof of componentwise bounds of |T| for case 2} for a brief proof of~(\ref{bound for phi(A) in case 2}).

\begin{proposition} \label{prop: componentwise bounds of |T| for case 2} 
    Suppose \( A \in \mathcal{S}_{4} \) satisfies conditions (\ref{order relation of A11 and A22}), (\ref{order realation between off-diagonal entries}) and \( \wh{\kappa} > 4 \). For case 2, under the assumptions~(\ref{case 2: assumption 1})--(\ref{case 2: assumption 2}), entries in \( \lvert T \rvert \) admit the following bounds.
    \begin{itemize}
        \item Relative bounds 
        \begin{align} \label{case 2: relative bound for |T| -- 1}
            &\lvert \mymat{a}{2}{1} \rvert 
            \le \frac{1}{8} \lvert \mymat{a}{1}{2} \rvert, \dspace 
            \lvert \mymat{a}{3}{4} \rvert 
            \le \frac{1}{8} \lvert \mymat{a}{1}{2} \rvert, \\ 
            \label{case 2: relative bound for |T| -- 2}
            &\lvert \mymat{a}{4}{3} \rvert 
            \le \frac{9}{8} \lvert \mymat{a}{1}{2} \rvert, \dspace 
            \lvert \mymat{a}{1}{1} - \mymat{a}{3}{3} \rvert 
            \le \frac{37}{48} \lvert \mymat{a}{1}{2} \rvert . 
        \end{align}

        \item Absolute bounds 
        \begin{align} 
            \label{case 2: bound for |a12|} 
            &\lvert \mymat{a}{1}{2} \rvert 
            \le \frac{ 2 ( 1 + \epsilon_{r} ) }{\wh{\calK}} \lVert A \rVert \cdot ( \eta_{1} + 1 ), \\
            \label{case 2: bound for |a21|}
            &\lvert \mymat{a}{2}{1} \rvert 
            \le \frac{ 2 ( 1 + \epsilon_{r} ) }{\wh{\calK}} \lVert A \rVert, \\ 
            \label{case 2: bound for |a43|}
            &\lvert \mymat{a}{4}{3} \rvert 
            \le \frac{ 2 ( 1 + \epsilon_{r} ) }{ \wh{\calK} } \lVert A \rVert \cdot ( \eta_{2} + 1 ), \\
            \label{case 2: bound for |a34|} 
            &\lvert \mymat{a}{3}{4} \rvert 
            \le \frac{ 2 ( 1 + \epsilon_{r} ) }{ \wh{\calK} } \lVert A \rVert, \\
            \label{case 2: bound for |a11-a33|^2/|a12|} 
            &\frac{ ( \mymat{a}{1}{1} - \mymat{a}{3}{3} )^{2} }{ \lvert \mymat{a}{1}{2} \rvert } 
            \le \frac{ 207( 1 + \epsilon_{r} ) }{ 16\wh{\calK} } \lVert A \rVert .
        \end{align}
    \end{itemize}
\end{proposition}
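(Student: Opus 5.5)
The plan is to mirror the argument behind Proposition~\ref{prop: componentwise bounds of |T| for case 1}. All bounds come from the reduced residual equation~(\ref{eq: reduced residual equation}) read entrywise, from Proposition~\ref{prop: relation of entries between A11 and hatA11}, which transfers information between the off-diagonal entries of \( \mymat{A}{i}{i} \) and \( \mymat{\wh{A}}{i}{i} \), and from assumptions~(\ref{case 2: assumption 1})--(\ref{case 2: assumption 2}). Throughout I use \( \lVert \mymat{\wh{A}}{1}{2} \rVert \le \lVert \mymat{A}{1}{2} \rVert + \lVert R \rVert \le ( 1 + \epsilon_{r} ) \lVert A \rVert \). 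In case~2 we have \( \lvert \mymat{\wh{a}}{1}{2} \rvert > \lvert \mymat{\wh{a}}{2}{1} \rvert \) and \( \lvert \mymat{\wh{a}}{4}{3} \rvert > \lvert \mymat{\wh{a}}{3}{4} \rvert \). The small entries are therefore \( \mymat{\wh{a}}{2}{1} = \mymat{\wh{a}}{1}{2}/\eta_{1} \) (in modulus) and \( \lvert \mymat{\wh{a}}{3}{4} \rvert = \lvert \mymat{\wh{a}}{4}{3} \rvert/\eta_{2} \).

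\textbf{Absolute bounds.} First I would show \( \lvert \mymat{\wh{a}}{2}{1} \rvert, \lvert \mymat{\wh{a}}{3}{4} \rvert \le 2( 1 + \epsilon_{r} ) \lVert A \rVert / \wh{\calK} \). This comes from the \( (2,1) \) and \( (1,2) \) entries of~(\ref{eq: reduced residual equation}): \( \mymat{\wh{a}}{2}{1}\wh{\sigma}_{1} - \wh{\sigma}_{2}\mymat{\wh{a}}{4}{3} \) and \( \mymat{\wh{a}}{1}{2}\wh{\sigma}_{2} - \wh{\sigma}_{1}\mymat{\wh{a}}{3}{4} \) are entries of \( \mymat{\wh{A}}{1}{2} \). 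These are combined with \( \wh{\kappa} > 4 \) and the same manipulation that yields Proposition~\ref{prop: estimate for tau} (which already gives \( \tau \le 2( 1 + \epsilon_{r} ) \lVert A \rVert \eta_{1} / \wh{\calK} \)). Proposition~\ref{prop: relation of entries between A11 and hatA11}~(2) and its analogue for \( \mymat{A}{2}{2} \) then give~(\ref{case 2: bound for |a21|}) and~(\ref{case 2: bound for |a34|}). Proposition~\ref{prop: relation of entries between A11 and hatA11}~(4) gives
\[
\lvert \mymat{a}{1}{2} \rvert \le \lvert \mymat{\wh{a}}{1}{2} \rvert + \lvert \mymat{\wh{a}}{2}{1} \rvert = ( \eta_{1} + 1 ) \lvert \mymat{\wh{a}}{2}{1} \rvert ,
\]
and likewise \( \lvert \mymat{a}{4}{3} \rvert \le ( \eta_{2} + 1 ) \lvert \mymat{\wh{a}}{3}{4} \rvert \). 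Together these yield~(\ref{case 2: bound for |a12|}) and~(\ref{case 2: bound for |a43|}).

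\textbf{Relative bounds.} Comparing the two previous bounds with assumption~(\ref{case 2: assumption 2}) gives \( \lvert \mymat{a}{2}{1} \rvert, \lvert \mymat{a}{3}{4} \rvert \le \tfrac{1}{8} \lvert \mymat{a}{1}{2} \rvert \). For \( \mymat{a}{4}{3} \), I use the analogue of Proposition~\ref{prop: relation of entries between A11 and hatA11}~(4), then~(\ref{order realation between off-diagonal entries}), then Proposition~\ref{prop: relation of entries between A11 and hatA11}~(3):
\[
\lvert \mymat{a}{4}{3} \rvert \le \lvert \mymat{\wh{a}}{4}{3} \rvert + \lvert \mymat{\wh{a}}{3}{4} \rvert \le \lvert \mymat{\wh{a}}{1}{2} \rvert + \tfrac{1}{8}\lvert \mymat{a}{1}{2} \rvert \le \tfrac{9}{8}\lvert \mymat{a}{1}{2} \rvert .
\]
For the diagonal entry I apply Proposition~\ref{prop: bound for diagonal entries of |T|} with \( \delta_{1} \le ( 1 + \eta_{1} )^{-1/2} \le 1/3 \) by~(\ref{case 2: assumption 1}) and Proposition~\ref{prop: estimate for smallest elements in U and V}. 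The term \( ( 1 + \epsilon_{r} ) \lVert A \rVert / \wh{\sigma}_{1} \) is at most \( \lvert \mymat{a}{1}{2} \rvert / 16 \), using \( \wh{\calK} \le \wh{\sigma}_{1} \) and~(\ref{case 2: assumption 2}). This gives \( \lvert \mymat{a}{1}{1} - \mymat{a}{3}{3} \rvert \le \big( \tfrac{1}{16} + \tfrac{1}{3} \big)\lvert \mymat{a}{1}{2} \rvert + \delta_{2}\lvert \mymat{a}{4}{3} \rvert \). The last term must be controlled by \( \tfrac{3}{8}\lvert \mymat{a}{1}{2} \rvert \), via \( \delta_{2}\lvert \mymat{a}{4}{3} \rvert \le \delta_{2}( \eta_{2} + 1 )\lvert \mymat{\wh{a}}{3}{4} \rvert \le \sqrt{ 1 + \eta_{2} }\,\lvert \mymat{\wh{a}}{3}{4} \rvert \) together with \( \lvert \mymat{a}{4}{3} \rvert \le \tfrac{9}{8}\lvert \mymat{a}{1}{2} \rvert \). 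Summing should give \( \tfrac{37}{48}\lvert \mymat{a}{1}{2} \rvert \).

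\textbf{The quadratic bound~(\ref{case 2: bound for |a11-a33|^2/|a12|}).} Square the estimate of Proposition~\ref{prop: bound for diagonal entries of |T|} using \( (x+y+z)^{2} \le 3(x^{2}+y^{2}+z^{2}) \), or a weighted version, and divide by \( \lvert \mymat{a}{1}{2} \rvert \). The three resulting terms are handled as follows:
\begin{itemize}
\item \( ( 1 + \epsilon_{r} )^{2}\lVert A \rVert^{2} / ( \wh{\sigma}_{1}^{2}\lvert \mymat{a}{1}{2} \rvert ) \) is small by~(\ref{case 2: assumption 2});
\item \( \delta_{1}^{2}\lvert \mymat{a}{1}{2} \rvert \le \lvert \mymat{a}{1}{2} \rvert / ( 1 + \eta_{1} ) \le 2( 1 + \epsilon_{r} )\lVert A \rVert / \wh{\calK} \) by~(\ref{case 2: bound for |a12|});
\item \( \delta_{2}^{2}\lvert \mymat{a}{4}{3} \rvert^{2} / \lvert \mymat{a}{1}{2} \rvert \le \tfrac{9}{8}\,\delta_{2}^{2}\lvert \mymat{a}{4}{3} \rvert \le \tfrac{9}{8}\cdot 2( 1 + \epsilon_{r} )\lVert A \rVert / \wh{\calK} \) by~(\ref{case 2: bound for |a43|}).
\end{itemize}
Collecting constants should give \( 207/16 \).

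\textbf{Main obstacle.} I expect the hardest step to be the first one: bounding \( \lvert \mymat{\wh{a}}{3}{4} \rvert \) (and \( \lvert \mymat{\wh{a}}{2}{1} \rvert \)) by \( O( ( 1 + \epsilon_{r} )\lVert A \rVert / \wh{\calK} ) \). In case~2 the \( (1,2) \) residual equation couples \( \mymat{\wh{a}}{3}{4} \) to the large entry \( \mymat{\wh{a}}{1}{2} \) through \( \wh{\sigma}_{2} \), so the case-1 argument cannot be reused verbatim. My first attempt would be to exploit the real Schur structure more fully. Orthogonal similarity preserves the trace and the determinant, so \( \mymat{\wh{a}}{1}{2}\mymat{\wh{a}}{2}{1} - \tfrac{1}{4}( \mymat{\wh{a}}{1}{1} - \mymat{\wh{a}}{2}{2} )^{2} = \mymat{a}{1}{2}\mymat{a}{2}{1} < 0 \), and similarly for the second block; this would be combined with the diagonal \( (1,1) \), \( (2,2) \) entries of~(\ref{eq: reduced residual equation}) to decouple the \( \wh{\sigma}_{2} \) terms. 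The normalization \( \wh{\calK} = \wh{\sigma}_{1} / \sqrt{ 1 + 2\wh{\sigma}_{2}^{2} } \) is exactly what absorbs a term of the form \( \wh{\sigma}_{2}\lvert \mymat{\wh{a}}{1}{2} \rvert / \wh{\sigma}_{1} \).
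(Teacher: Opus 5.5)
Your plan follows the paper's proof, and every step downstream of the bound \( \lvert \mymat{\wh{a}}{3}{4} \rvert \le 2(1+\epsilon_{r})\lVert A \rVert/\wh{\calK} \) is correct:
\begin{itemize}
\item the absolute bounds via Proposition~\ref{prop: relation of entries between A11 and hatA11}~(2), (4) and their \( \mymat{A}{2}{2} \) analogues;
\item your route to \( 9/8 \) through \( \lvert \mymat{\wh{a}}{4}{3} \rvert \le \lvert \mymat{\wh{a}}{1}{2} \rvert \le \lvert \mymat{a}{1}{2} \rvert \) and \( \lvert \mymat{\wh{a}}{3}{4} \rvert \le \tfrac{1}{8}\lvert \mymat{a}{1}{2} \rvert \), which is a bit more direct than the paper's minimum of \( (\eta_{2}+1)/\eta_{2} \) and \( (\eta_{2}+1)/8 \);
\item the estimate \( \delta_{2}\lvert \mymat{a}{4}{3} \rvert \le \min\{ \sqrt{1+\eta_{2}}/8,\ 9/(8\sqrt{1+\eta_{2}}) \} \lvert \mymat{a}{1}{2} \rvert \le \tfrac{3}{8}\lvert \mymat{a}{1}{2} \rvert \), which your two ingredients yield;
\item the \( 207/16 \) computation, which is exactly the paper's.
\end{itemize}
The gap is the step you yourself flag as the main obstacle. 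You never actually prove the bound on \( \lvert \mymat{\wh{a}}{3}{4} \rvert \), yet every bound involving \( \mymat{a}{3}{4} \), \( \mymat{a}{4}{3} \) or \( \mymat{a}{1}{1}-\mymat{a}{3}{3} \) rests on it. For \( \mymat{\wh{a}}{2}{1} \) nothing is needed: in case~2, (\ref{order realation between off-diagonal entries}) gives \( \tau = \lvert \mymat{\wh{a}}{1}{2} \rvert \), so \( \lvert \mymat{\wh{a}}{2}{1} \rvert = \tau/\eta_{1} \le 2(1+\epsilon_{r})\lVert A \rVert/\wh{\calK} \) by Proposition~\ref{prop: estimate for tau}.

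Your diagnosis of the obstacle is also off. The coupling through \( \wh{\sigma}_{2} \) to the ``large'' entry is harmless, because \( \lvert \mymat{\wh{a}}{1}{2} \rvert \le \lVert \mymat{\wh{A}}{1}{1} \rVert = \lVert \mymat{A}{1}{1} \rVert \le \lVert A \rVert \) by orthogonal similarity, however it compares with the other off-diagonal entries. So the case-1 argument carries over verbatim, and no trace/determinant identity is needed.

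The paper splits on whether \( \wh{\sigma}_{1}\lvert \mymat{\wh{a}}{3}{4} \rvert \le 2\wh{\sigma}_{2}\lvert \mymat{\wh{a}}{1}{2} \rvert \):
\begin{itemize}
\item If so, \( \lvert \mymat{\wh{a}}{3}{4} \rvert \le 2\lVert A \rVert/\wh{\kappa} \).
\item If not, the \( (1,2) \) entry of (\ref{eq: reduced residual equation}) gives \( \lvert \mymat{\wh{a}}{1}{4} \rvert = \lvert \wh{\sigma}_{2}\mymat{\wh{a}}{1}{2} - \wh{\sigma}_{1}\mymat{\wh{a}}{3}{4} \rvert > \tfrac{1}{2}\wh{\sigma}_{1}\lvert \mymat{\wh{a}}{3}{4} \rvert \), hence \( \lvert \mymat{\wh{a}}{3}{4} \rvert < 2(1+\epsilon_{r})\lVert A \rVert/\wh{\sigma}_{1} \).
\end{itemize}
Both bounds are at most \( 2(1+\epsilon_{r})\lVert A \rVert/\wh{\calK} \), since \( \wh{\calK} \le \wh{\sigma}_{1} \wedge \wh{\kappa} \).

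Even the plain triangle inequality suffices. It gives \( \wh{\sigma}_{1}\lvert \mymat{\wh{a}}{3}{4} \rvert \le (1+\epsilon_{r})\lVert A \rVert + \wh{\sigma}_{2}\lVert A \rVert \le (1+\epsilon_{r})(1+\wh{\sigma}_{2})\lVert A \rVert \), and \( 1+\wh{\sigma}_{2} \le \sqrt{2}\,\sqrt{1+2\wh{\sigma}_{2}^{2}} \). This is exactly the absorption by \( \wh{\calK} \) you anticipate in your last sentence. With this step inserted, your argument is complete and essentially the paper's.
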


Compared with case 1, in case 2 the absolute bound for \( \lvert \mymat{a}{4}{3} \rvert \) includes an extra \( \eta_{2} + 1 \) multiplier term and \( \lvert \mymat{a}{4}{3} \rvert \) could be larger than \( \lvert \mymat{a}{1}{2} \rvert \). To avoid excessive repetition with case 1, we only consider \( \lvert \mymat{a}{4}{3} \rvert > \lvert \mymat{a}{1}{2} \rvert \) in detail. Then the results in Proposition~\ref{prop: componentwise bounds of |T| for case 2} still hold with \( \mymat{a}{1}{2} \) replaced by \( \mymat{a}{4}{3} \) in~(\ref{case 2: relative bound for |T| -- 1})--(\ref{case 2: relative bound for |T| -- 2}) and~(\ref{case 2: bound for |a11-a33|^2/|a12|}). Moreover, by (\ref{case 2: bound for |a43|}) and assumption~(\ref{case 2: assumption 2}), it is easy to show that \( \eta_{2} \ge 7 \). For a brief discussion of the situation when \( \lvert \mymat{a}{4}{3} \rvert \le \lvert \mymat{a}{1}{2} \rvert \), see Remark~\ref{remark: |a43| <= |a12| in case 2}. We continue to use the notation introduced in Subsection~\ref{subsec: case 1} with the same meaning.

\textbf{The first step of GECP.} 
Without loss of generality, we assume \( -\mymat{a}{4}{3} \) in \( ( 1, 3 ) \) is chosen as the first pivot. Then \( \wh{P}_{1} = I_{4} \) and \( \wh{Q}\trans_{1} \) swaps columns \( 1 \) and \( 3 \). Thus
\[
\wh{P}_{1} T \wh{Q}_{1}\trans = 
\begin{bNiceMatrix}[first-row, first-col, margin=4pt] 
    & \scriptstyle 3 & \scriptstyle 2 & \scriptstyle 1 & \scriptstyle 4 \\
    \scriptstyle 1 & -\mymat{a}{4}{3} & \mymat{a}{1}{2} & \mymat{a}{1}{1} - \mymat{a}{3}{3} & 0 \\
    \scriptstyle 2 & 0 & \mymat{a}{1}{1} - \mymat{a}{3}{3} & \mymat{a}{2}{1} & -\mymat{a}{4}{3} \\
    \scriptstyle 3 & \mymat{a}{1}{1} - \mymat{a}{3}{3} & 0 & -\mymat{a}{3}{4} & \mymat{a}{1}{2} \\
    \scriptstyle 4 & \mymat{a}{2}{1} & -\mymat{a}{3}{4} & 0 & \mymat{a}{1}{1} - \mymat{a}{3}{3}
\end{bNiceMatrix} .
\] 
Due to zero entries, only entries in \( (3, 1) \), \( (3, 2) \), \( (4, 1) \) and \( (4, 2) \) are updated. By Proposition~\ref{prop: componentwise bounds of |T| for case 2}, 
\begin{align*} 
    (3, 1) \colon \lvert \mymat[(1)]{\wh{t}}{3}{3} \rvert  
    &\le ( 1 + \gamma_{5} ) \cdot \big( \lvert \mymat{a}{3}{4} \rvert + ( \mymat{a}{1}{1} - \mymat{a}{3}{3})^{2} \lvert \mymat[-1]{a}{4}{3} \rvert \big)
    \le \frac{ 239( 1 + \epsilon_{r} ) }{ 16\wh{\calK} } \lVert A \rVert, \\
    (3, 2) \colon \lvert \mymat[(1)]{\wh{t}}{3}{2} \rvert
    &\le ( 1 + \gamma_{3} ) \cdot \lvert ( ( \mymat{a}{1}{1} - \mymat{a}{3}{3}) \mymat[-1]{a}{4}{3} ) \mymat{a}{1}{2} \rvert
    \le \frac{ 37 ( 1 + \epsilon_{r} ) }{ 24 \wh{\calK} } \lVert A \rVert \cdot ( \eta_{1} + 1 ), \\
    (4, 1) \colon \lvert \mymat[(1)]{\wh{t}}{4}{3} \rvert 
    &\le ( 1 + \gamma_{3} ) \cdot \lvert \mymat{a}{2}{1} ( \mymat[-1]{a}{4}{3} ( \mymat{a}{1}{1} - \mymat{a}{3}{3} ) ) \rvert 
    \le \frac{ 37 ( 1 + \epsilon_{r} ) }{ 24 \wh{\calK} } \lVert A \rVert, \\
    (4, 2) \colon \lvert \mymat[(1)]{\wh{t}}{4}{2} \rvert
    &\le ( 1 + \gamma_{3} ) \cdot \big( \lvert \mymat{a}{3}{4} \rvert + \lvert \mymat{a}{2}{1} ( \mymat[-1]{a}{4}{3} \mymat{a}{1}{2} ) \rvert \big)
    \le \frac{ 4( 1 + \epsilon_{r} ) }{ \wh{\calK} } \lVert A \rVert .
\end{align*}
Compared with case 1, now the bound for \( \lvert \mymat[(1)]{\wh{t}}{3}{2} \rvert \) in \( (3, 2) \) includes an extra \( ( \eta_{1} + 1 ) \) multiplier term. Moreover, it can be shown that \( \lvert \mymat{a}{4}{3} \rvert \) is still the largest entry in \( \lvert \wh{T}^{(1)}( 2 \tightcolon 4, 2 \tightcolon 4 ) \rvert \) by referring to~(\ref{case 2: relative bound for |T| -- 1})--(\ref{case 2: relative bound for |T| -- 2}). 

\textbf{The second step of GECP.} 
Now \( -\mymat{a}{4}{3} \) in \( (2, 4) \) is chosen as the second pivot. Thus 
\[
\wh{P}_{2}\wh{T}^{(1)}\wh{Q}_{2}\trans = 
\begin{bNiceMatrix}[first-row, first-col, margin=4pt] 
    & \scriptstyle 3 & \scriptstyle 4 & \scriptstyle 1 & \scriptstyle 2 \\
    \scriptstyle 1 & -\mymat{a}{4}{3} & 0 & \mymat{a}{1}{1} - \mymat{a}{3}{3} & \mymat{a}{1}{2} \\
    \scriptstyle 2 & 0 & -\mymat{a}{4}{3} & \mymat{a}{2}{1} & \mymat{a}{1}{1} - \mymat{a}{3}{3} \\
    \scriptstyle 3 & 0 & \mymat{a}{1}{2} & \mymat[(1)]{\wh{t}}{3}{3}  & \mymat[(1)]{\wh{t}}{3}{2} \\
    \scriptstyle 4 & 0 & \mymat{a}{1}{1} - \mymat{a}{3}{3} & \mymat[(1)]{\wh{t}}{4}{3} & \mymat[(1)]{\wh{t}}{4}{2}
\end{bNiceMatrix} ,
\]
where \( \wh{P}_{2} = I_{4} \), \( \wh{Q}_{2}\trans \) swaps columns \( 2 \) and \( 4 \).
After the update, bounds for \( \lvert \mymat[(2)]{\wh{t}}{i}{j} \rvert \) in \( (3, 1) \), \( (3, 2) \), \( (4, 1) \) and~\( (4, 2) \) are as follows: 
\begin{align*} 
    (3, 1) \colon \lvert \mymat[(2)]{\wh{t}}{3}{3} \rvert  
    & \le ( 1 + \gamma_{3} ) \cdot \big( \lvert \mymat[(1)]{\wh{t}}{3}{3} \rvert + \lvert ( \mymat{a}{1}{2} \mymat[-1]{a}{4}{3} ) \mymat{a}{2}{1} \rvert \big)
    \le \frac{ 271( 1 + \epsilon_{r} ) }{ 16\wh{\calK} } \lVert A \rVert , \\ 
    (3, 2) \colon \lvert \mymat[(2)]{\wh{t}}{3}{4} \rvert  
    & \le ( 1 + \gamma_{4} ) \cdot \big( \lvert \mymat[(1)]{\wh{t}}{3}{2} \rvert + \lvert \mymat{a}{1}{2} ( \mymat[-1]{a}{4}{3} ( \mymat{a}{1}{1} - \mymat{a}{3}{3} ) ) \rvert \big)
    \le \frac{ 37 ( 1 + \epsilon_{r} ) }{ 12 \wh{\calK} } \lVert A \rVert \cdot ( \eta_{1} + 1 ) , \\
    (4, 1) \colon \lvert \mymat[(2)]{\wh{t}}{4}{3} \rvert  
    &\le ( 1 + \gamma_{4} ) \cdot \big( \lvert \mymat[(1)]{\wh{t}}{4}{3} \rvert + \lvert ( \mymat{a}{1}{1} - \mymat{a}{3}{3} ) \mymat[-1]{a}{4}{3}\mymat{a}{2}{1} \rvert \big)
    \le \frac{ 37 ( 1 + \epsilon_{r} ) }{ 12 \wh{\calK} } \lVert A \rVert , \\
    (4, 2) \colon \lvert \mymat[(2)]{\wh{t}}{4}{4} \rvert
    &\le ( 1 + \gamma_{5} ) \cdot \big( \lvert \mymat[(1)]{\wh{t}}{4}{2} \rvert + ( \mymat{a}{1}{1} - \mymat{a}{3}{3} )^{2} \lvert \mymat[-1]{a}{4}{3} \rvert \big)
    \le \frac{ 271 ( 1 + \epsilon_{r} ) }{ 16 \wh{\calK} } \lVert A \rVert .
\end{align*}  

\textbf{The third step of GECP.} 
The bounds we obtain for \( \lvert \mymat[(2)]{\wh{t}}{i}{j} \rvert \) imply that the third largest entry of \( \lvert \wh{T}^{(2)}( 3 \tightcolon 4, 3 \tightcolon 4 ) \rvert \) is at most 
\[
\frac{ 271 ( 1 + \epsilon_{r} ) }{ 16 \wh{\calK} } \lVert A \rVert  .
\]
As explained in case 1, the same bound also applies to the update in the third step of GECP.

In summary, the bounds for \( \mymat{s}{i}{j} \) are as follows:
\begin{itemize}
    \item For \( (i, j) \notin \{ (3, 1), (3, 2), (4, 1), (4, 2) \} \), \( \mymat{s}{i}{j} = \lvert \mymat{t}{i}{j} \rvert \).

    \item For \( (i, j) \in \{ (3, 1), (4, 2) \} \), 
    \begin{align*}
        \mymat{s}{i}{j} 
        &\le \lvert \mymat{a}{3}{4} \rvert + 2 \Big( ( \mymat{a}{1}{1} - \mymat{a}{3}{3} )^{2} \lvert \mymat[-1]{a}{4}{3} \rvert + \lvert \mymat{a}{1}{2} \mymat[-1]{a}{4}{3} \mymat{a}{2}{1} \rvert + \frac{ 271( 1 + \epsilon_{r} ) } { 16 \wh{\calK} } \lVert A \rVert \Big) \\
        &\le \frac{ 263 ( 1 + \epsilon_{r} ) }{ 4 \wh{\calK} } \lVert A \rVert . 
    \end{align*}

    \item For \( \mymat{s}{3}{2} \),  
    \begin{align*}
        \mymat{s}{3}{2} 
        &\le 2 \Big( \lvert ( \mymat{a}{1}{1} - \mymat{a}{3}{3} ) \mymat[-1]{a}{4}{3} \mymat{a}{1}{2} \rvert + \frac{ 271( 1 + \epsilon_{r} ) } { 16 \wh{\calK} } \lVert A \rVert \Big) \\
        &\le \Big( \frac{271}{8} + \frac{ 37 ( \eta_{1} + 1 ) }{ 12 } \Big) \cdot  \frac{ 1 + \epsilon_{r} }{ \wh{\calK} } \lVert A \rVert . 
    \end{align*} 

    \item For \( \mymat{s}{4}{1} \),  
    \begin{align*}
        \mymat{s}{4}{1} 
        \le 2\Big( \lvert \mymat{a}{2}{1} \mymat[-1]{a}{4}{3} (  \mymat{a}{1}{1} - \mymat{a}{3}{3} ) \rvert + \frac{ 271( 1 + \epsilon_{r} ) } { 16 \wh{\calK} } \lVert A \rVert \Big) \le \frac{ 887 ( 1 + \epsilon_{r} ) }{ 24 \wh{\calK} } \lVert A \rVert . 
    \end{align*}
    
\end{itemize} 

We still denote \( \wh{X}_{1}  \) by \( \wh{X} \). Remember that \( \eta_{1} \ge 8 \) and \( \eta_{2} \ge 7 \). Following the argument in Subsection~\ref{subsec: case 1}, the contribution of each \( \mymat{s}{i}{j} \) to the bound of \( \wh{\phi}( A ) \) is bounded as follows: 
\[
\begin{bNiceMatrix}[first-row, first-col, margin=4pt] 
    & \scriptstyle \wh{\sigma}_{1} & \scriptstyle \delta_{1}\wh{\sigma}_{1} & \scriptstyle \delta_{2}\wh{\sigma}_{1} & \scriptstyle \delta_{1}\delta_{2}\wh{\sigma}_{1} \\
    \scriptstyle \delta_{1}\delta_{2} & \mymat{s}{1}{1} & \mymat{s}{1}{2} & \mymat{s}{1}{3} & \mymat{s}{1}{4} \\
    \scriptstyle \delta_{2} & \mymat{s}{2}{1} & \mymat{s}{2}{2} & \mymat{s}{2}{3} & \mymat{s}{2}{4} \\
    \scriptstyle \delta_{1} & \mymat{s}{3}{1} & \mymat{s}{3}{2} & \mymat{s}{3}{3} & \mymat{s}{3}{4} \\
    \scriptstyle 1 & \mymat{s}{4}{1} & \mymat{s}{4}{2} & \mymat{s}{4}{3} & \mymat{s}{4}{4}
\end{bNiceMatrix} 
\]
\begin{itemize} 
    \item For \( ( i, j)  \in \{ ( 1, 2 ), ( 3, 4 ) \} \), \( \mymat{s}{i}{j} = \lvert \mymat{a}{1}{2} \rvert \). Then the contribution of two \( \mymat{s}{i}{j} \) is bounded by 
    \begin{align} \label{case 2: contribution of a12}
        2\gamma_{12} \delta_{1}^{2} \delta_{2} \lvert \mymat{a}{1}{2} \rvert \cdot \wh{\calK} 
        \le \sqrt{2} ( 1 + \epsilon_{r} ) \cdot \gamma_{12} \lVert A \rVert . 
    \end{align}

    \item For \( ( i, j ) \in \{ ( 2, 1 ), ( 4, 3 ) \} \), \( \mymat{s}{i}{j} = \lvert \mymat{a}{2}{1} \rvert \). Then the contribution of two \( \mymat{s}{i}{j} \) is bounded by 
    \begin{align} \label{case 2: contribution of a21}
        2\gamma_{12} \delta_{2} \lvert \mymat{a}{2}{1} \rvert \cdot \wh{\calK} 
        \le \sqrt{2} ( 1 + \epsilon_{r} ) \cdot \gamma_{12} \lVert A \rVert . 
    \end{align}

    \item For \( (i, j) \in \{ (1, 3), (2, 4) \} \), \( \mymat{s}{i}{j} = \lvert \mymat{a}{4}{3} \rvert \). Then the contribution of two \( \mymat{s}{i}{j} \) is bounded by 
    \begin{align} \label{case 2: contribution of a43}
        2\gamma_{12} \delta_{1} \delta_{2}^{2} \lvert \mymat{a}{4}{3} \rvert \cdot \wh{\calK} 
        \le \frac{ 4( 1 + \epsilon_{r} ) }{3} \cdot \gamma_{12} \lVert A \rVert . 
    \end{align}
    
    \item For \( (i, j) \in \{ (1, 1), (2, 2), (3, 3), (4, 4) \} \), \( \mymat{s}{i}{j} = \lvert \mymat{a}{1}{1} - \mymat{a}{3}{3} \rvert \). Then the contribution of four \( \mymat{s}{i}{j} \) is bounded by 
    \begin{align} \label{case 2: contribution of a11 - a33}
        \notag 4\gamma_{12} \delta_{1}\delta_{2} \lvert \mymat{a}{1}{1} - \mymat{a}{3}{3} \rvert \cdot \wh{\calK} 
        &\le 4\gamma_{12} \Big( \frac{ 1 + \epsilon_{r} }{ \wh{\sigma}_{1} } \delta_{1}\delta_{2} \lVert A \rVert + \delta_{1}^{2} \delta_{2} \lvert \mymat{a}{1}{2} \rvert + \delta_{1} \delta_{2}^{2} \lvert \mymat{a}{4}{3} \rvert \Big) \cdot \wh{\calK} \\ 
        &\le \frac{ ( 8 + 7\sqrt{2} ) ( 1 + \epsilon_{r} ) }{3} \cdot \gamma_{12} \lVert A \rVert .
    \end{align}

    \item  For \( (i, j) \in \{ ( 3, 1 ), ( 4, 2 ) \} \), the contribution of two \( \mymat{s}{i}{j} \) is bounded by 
    \begin{align} \label{case 2: contribution of s31 and s42}
        2\gamma_{12} \delta_{1} \mymat{s}{i}{j} \cdot \wh{\calK} 
        \le \frac{ 263( 1 + \epsilon_{r} ) }{6} \cdot \gamma_{12} \lVert A \rVert . 
    \end{align}

    \item The contribution of \( \mymat{s}{3}{2} \) is bounded by 
    \begin{align} \label{case 2: contribution of s32}
        \gamma_{12} \delta_{1}^{2}\mymat{s}{3}{2} \cdot \wh{\calK} 
        \le \frac{ 493 ( 1 + \epsilon_{r} ) }{ 72 } \cdot \gamma_{12} \lVert A \rVert .
    \end{align}

    \item The contribution of \( \mymat{s}{4}{1} \) is bounded by 
    \begin{align} \label{case 2: contribution of s41}
        \gamma_{12} \mymat{s}{4}{1} \cdot \wh{\calK} 
        \le \frac{ 887 ( 1 + \epsilon_{r} ) }{ 24 } \cdot \gamma_{12} \lVert A \rVert .
    \end{align}

    \item The remaining \( \mymat{s}{i}{j} = 0 \).
\end{itemize}

Summing bounds (\ref{contribution of X_2}) and (\ref{case 2: contribution of a12})--(\ref{case 2: contribution of s41}) gives 
\begin{align} \label{smaller bound for phi(A) in case 2}
\wh{\phi}( A ) 
\le \Big( \frac{ 3299 + 156\sqrt{2} }{ 36 } ( 1 + \epsilon_{r} ) + 64\sqrt{2} \Big) \cdot \gamma_{12} \lVert A \rVert .
\end{align}  
Comparing (\ref{bound for phi(A) in case 2}) with (\ref{smaller bound for phi(A) in case 2}) and (\ref{bound for phi(A) in case 1}), it is clear that for both cases,  
\begin{align*} 
    \wh{\phi}( A ) 
    \le ( 6272 + 3072\sqrt{2} ) ( 1 + \epsilon_{r} ) \cdot  \gamma_{12} \lVert A \rVert .
\end{align*}

\begin{remark} \label{remark: |a43| <= |a12| in case 2} 
    If \( \lvert \mymat{a}{1}{2} \rvert \ge \lvert \mymat{a}{4}{3} \rvert \), one can follow the above analysis to show that the bound (\ref{bound for phi(A) in case 2}) still holds. In particular, when \( \lvert \mymat{a}{1}{2} \rvert = \lvert \mymat{a}{4}{3} \rvert \), a somewhat tricky pivoting choice is that \( -\mymat{a}{4}{3} \) is the first pivot while \( \mymat{a}{1}{2} \) is the second, or vice versa. In this case, it is not difficult to show that \( \mymat{s}{2}{1} \), \( \mymat{s}{3}{1} \) and \( \mymat{s}{4}{1} \) can be bounded by \( O( ( 1 + \epsilon_{r} ) \macheps \lVert A \rVert ) \). The other \( \mymat{s}{i}{j} \) either satisfy \( \mymat{s}{i}{j} = \lvert \mymat{t}{i}{j} \rvert \) or are bounded by 
    \[
    O\Big( \frac{ 1 + \epsilon_{r} }{ \wh{\calK} } \macheps \lVert A \rVert + \lvert \mymat{a}{1}{1} - \mymat{a}{3}{3} \rvert \Big) . 
    \] 
    Notice that \( \lvert \mymat{a}{1}{2} \rvert = \lvert \mymat{a}{4}{3} \rvert \) implies  
    \[
    \lvert \mymat{a}{1}{2} \rvert = \lvert \mymat{a}{4}{3} \rvert 
    \le \frac{ 2( 1 + \epsilon_{r} ) }{\wh{\calK}} \lVert A \rVert \cdot ( ( \eta_{1} \wedge \eta_{2} ) + 1  )  
    \] 
    and \( \eta_{2} \ge 7 \). Thus, for \( k = 1, 2 \) we have 
    \[
    \delta_{k} \lvert \mymat{a}{1}{1} - \mymat{a}{3}{3} \rvert 
    \le \frac{ 1 + \epsilon_{r} }{ \wh{\sigma}_{1} } \delta_{k} \lVert A \rVert + \delta_{1} \delta_{k} \lvert \mymat{a}{1}{2} \rvert + \delta_{2} \delta_{k} \lvert \mymat{a}{4}{3} \rvert 
    \le \frac{ ( 16 + \sqrt{2} )( 1 + \epsilon_{r} ) }{ 4\wh{\calK} } \lVert A \rVert .
    \]
    Using this fact, we can establish a bound of the form \( O( ( 1 + \epsilon_{r} ) \macheps \lVert A \rVert ) \) for \( \wh{\phi} ( A ) \) when  \( \lvert \mymat{a}{1}{2} \rvert = \lvert \mymat{a}{4}{3} \rvert \).  
\end{remark}

\section{Contribution to error from QR factorization} \label{sec: contribution to error from QR factorization}

The objective in this section is to show that there exist \( \Delta \wh{X} \) and \( \wt{Q} \) satisfying~(\ref{bounds for Delta X and Q}) and~(\ref{bound for contribution of QR}). Based on this, we will present a brief proof of~(\ref{bound for contribution of QR}).

Recall that the largest entry of \( \lvert \wh{X} \rvert \) is assumed to be in its first column. Suppose \( \wt{G}_{1} = \mygiv{-\mymat{\wh{x}}{1}{1}}{-\mymat{\wh{x}}{2}{1}} \) and \( \wh{G}_{1} \) is the computed \( \wt{G}_{1} \). By Lemma~\ref{lemma: componentwise backward error result on GR}, we obtain
\[
\sffl\Big( 
\wh{G}_{1}\trans 
\begin{bmatrix}
    -\mymat{\wh{x}}{1}{2}  \\
    -\mymat{\wh{x}}{2}{2}
\end{bmatrix} 
\Big) 
= \wt{G}_{1}\trans
\begin{bmatrix}
    -\mymat{\wh{x}}{1}{2} + \Delta \mymat{\wh{x}}{1}{2} \\
    -\mymat{\wh{x}}{2}{2} + \Delta \mymat{\wh{x}}{2}{2}
\end{bmatrix} , 
\]
where
\[ 
\lvert \Delta \mymat{\wh{x}}{1}{2} \rvert 
\le \gamma_{6} ( 5\lvert \mymat{\wh{x}}{1}{2} \rvert + 8 \wh{\sigma}_{2} ) , \quad
\lvert \Delta \mymat{\wh{x}}{2}{2} \rvert 
\le \gamma_{6} ( 5\lvert \mymat{\wh{x}}{2}{2} \rvert + 8 \wh{\sigma}_{2} ) .
\]
Therefore, the \( 2 \times 2 \) matrix 
\[
\begin{bmatrix}
    0 & \Delta \mymat{\wh{x}}{1}{2} \\
    0 & \Delta \mymat{\wh{x}}{2}{2}
\end{bmatrix}
\]
satisfies the requirement for \( \Delta \wh{X} \) in~(\ref{bounds for Delta X and Q}). It will be shown that this is the desired \( \Delta \wh{X} \). 

Let 
\[
\wh{H} 
= \begin{bmatrix}
    -\wh{X} \\
    I_{2}
\end{bmatrix}
= \begin{bmatrix}
    -\mymat{\wh{x}}{1}{1} & -\mymat{\wh{x}}{1}{2} \\
    -\mymat{\wh{x}}{2}{1} & -\mymat{\wh{x}}{2}{2} \\
    1 & 0 \\
    0 & 1
\end{bmatrix}, \dspace  
\wt{H} 
= \begin{bmatrix}
    -\mymat{\wh{x}}{1}{1} & -\mymat{\wh{x}}{1}{2} + \Delta \mymat{\wh{x}}{1}{2} \\
    -\mymat{\wh{x}}{2}{1} & -\mymat{\wh{x}}{2}{2} + \Delta \mymat{\wh{x}}{2}{2} \\
    1 & 0 \\
    0 & 1
\end{bmatrix} .
\]
Consider the Givens QR of \( \wh{H} \) in finite precision arithmetic and of \( \wt{H} \) in exact arithmetic, with the ordering of Givens update specified in (\ref{order of rotations if col1>col2}). For \( 1 \le i \le 4 \), let \( \wh{G}_{i} \) and \( \wt{G}_{i} \) be the \( i \)th computed and exact Givens rotations applied to \( \wh{H} \) and \( \wt{H} \), respectively.
The entries of \( \wh{G}_{i} \) and \( \wt{G}_{i} \) are denoted by 
\[ 
\wh{G}_{i} = \begin{bmatrix}
    \wh{c}_{i} & \wh{s}_{i} \\
   -\wh{s}_{i} & \wh{c}_{i}  
\end{bmatrix}, \dspace
\wt{G}_{i} = \begin{bmatrix}
    \wt{c}_{i} & \wt{s}_{i} \\
   -\wt{s}_{i} & \wt{c}_{i}  
\end{bmatrix} . 
\]

\textbf{The first Givens update.} 
Rotations \( \wh{G}_{1} \) and \( \wt{G}_{1} \) are specified at the beginning of this section. Denote 
\[
\mymat[(1)]{\wh{x}}{1}{1} 
= \sffl\big( \sqrt{ \mymat[2]{\wh{x}}{1}{1} + \mymat[2]{\wh{x}}{2}{2} } \big), \dspace
\mymat[(1)]{\wt{x}}{1}{1} 
= \sqrt{ \mymat[2]{\wh{x}}{1}{1} + \mymat[2]{\wh{x}}{2}{2} } 
\]
and 
\[
\begin{bmatrix}
    \mymat[(1)]{\wt{x}}{1}{2} \\
    \mymat[(1)]{\wt{x}}{2}{2}
\end{bmatrix} 
\coloneqq \wt{G}_{1}\trans
\begin{bmatrix}
    -\mymat{\wh{x}}{1}{2} + \Delta \mymat{\wh{x}}{1}{2} \\
    -\mymat{\wh{x}}{2}{2} + \Delta \mymat{\wh{x}}{2}{2}
\end{bmatrix} . 
\]
Applying Lemma~\ref{lemma: perturbations of GR} with \( \gamma_{k} = 0 \), it follows that
\[
\lvert \wh{c}_{1} - \wt{c}_{1} \rvert 
\le \gamma_{4} \lvert \wt{c}_{1} \rvert, \dspace 
\lvert \wh{s}_{1} - \wt{s}_{1} \rvert 
\le \gamma_{4} \lvert \wt{s}_{1} \rvert, \dspace 
\lvert \mymat[(1)]{\wh{x}}{1}{1} - \mymat[(1)]{\wt{x}}{1}{1} \rvert 
\le \gamma_{3} \lvert \mymat[(1)]{\wt{x}}{1}{1} \rvert .
\] 
 
\textbf{The second Givens update.}
The rotation \( \wh{G}_{2} \) is the computed \( \mygiv{\mymat[(1)]{\wh{x}}{1}{1}}{1} \) and \( \wt{G}_{2} = \mygiv{\mymat[(1)]{\wt{x}}{1}{1}}{1} \). Applying Lemma~\ref{lemma: perturbations of GR} with \( \gamma_{k} = \gamma_{3} \), we have 
\[
\lvert \wh{c}_{2} - \wt{c}_{2} \rvert \le \gamma_{10} \lvert \wt{c}_{2} \rvert , \dspace 
\lvert \wh{s}_{2} - \wt{s}_{2} \rvert \le \gamma_{10} \lvert \wt{s}_{2} \rvert . 
\]
Notice that 
\[
\sffl\Big( \wh{G}_{2}\trans 
\begin{bmatrix}
    \mymat[(1)]{\wt{x}}{1}{2} \\
    0
\end{bmatrix} \Big) 
= \sffl\Big( 
\begin{bmatrix}
    \times \\
    \wh{s}_{2} \mymat[(1)]{\wt{x}}{1}{2}
\end{bmatrix} \Big), \dspace 
\wt{G}_{2}\trans \begin{bmatrix}
    \mymat[(1)]{\wt{x}}{1}{2} \\
    0
\end{bmatrix} 
= \begin{bmatrix}
    \times \\
    \wt{s}_{2} \mymat[(1)]{\wt{x}}{1}{2}
\end{bmatrix} .
\]
Denote \( \mymat[(2)]{\wh{x}}{3}{2} = \sffl (\wh{s}_{2} \mymat[(1)]{\wt{x}}{1}{2}) \) and \( \mymat[(2)]{\wt{x}}{3}{2} = \wt{s}_{2} \mymat[(1)]{\wt{x}}{1}{2} \). By \( \lvert \wh{s}_{2} - \wt{s}_{2} \rvert \le \gamma_{10} \lvert \wt{s}_{2} \rvert \), we have
\[
\lvert \mymat[(2)]{\wh{x}}{3}{2} - \mymat[(2)]{\wt{x}}{3}{2} \rvert 
\le \gamma_{11} \lvert \mymat[(2)]{\wt{x}}{3}{2} \rvert .
\]

\textbf{The third Givens update.} The rotation \( \wh{G}_{3} \) is the computed \( \mygiv{\mymat[(1)]{\wt{x}}{2}{2}}{1} \) and \( \wt{G}_{3} = \mygiv{\mymat[(1)]{\wt{x}}{2}{2}}{1} \). Denote 
\[ 
\mymat[(3)]{\wh{x}}{2}{2}
= \sffl\big( \tsqrt{ (\mymat[(1)]{\wt{x}}{2}{2})^{2} + 1 } \big), \dspace 
\mymat[(3)]{\wt{x}}{2}{2} 
= \tsqrt{ (\mymat[(1)]{\wt{x}}{2}{2})^{2} + 1 } . 
\]
Applying Lemma~\ref{lemma: perturbations of GR} with \( \gamma_{k} = 0 \), we obtain 
\[
\lvert \wh{c}_{3} - \wt{c}_{3} \rvert \le \gamma_{4} \lvert \wt{c}_{3} \rvert, \dspace
\lvert \wh{s}_{3} - \wt{s}_{3} \rvert \le \gamma_{4} \lvert \wt{s}_{3} \rvert, \dspace 
\lvert \mymat[(3)]{\wt{x}}{2}{2} - \mymat[(3)]{\wh{x}}{2}{2} \rvert 
\le \gamma_{3} \lvert \mymat[(3)]{\wt{x}}{2}{2} \rvert .  
\]

\textbf{The fourth Givens update.} The rotation \( \wh{G}_{4} \) is the computed \( \mygiv{\mymat[(3)]{\wh{x}}{2}{2}}{\mymat[(2)]{\wh{x}}{3}{2}} \) and \( \wt{G}_{4} = \mygiv{\mymat[(3)]{\wt{x}}{2}{2}}{\mymat[(2)]{\wt{x}}{3}{2}} \). Applying Lemma~\ref{lemma: perturbations of GR} with \( \gamma_{k} = \gamma_{11} \), we have
\[ 
\lvert \wh{c}_{4} - \wt{c}_{4} \rvert \le \gamma_{26} \lvert \wt{c}_{4} \rvert, \dspace  
\lvert \wh{s}_{4} - \wt{s}_{4} \rvert \le \gamma_{26} \lvert \wt{s}_{4} \rvert .
\]

Now we reconstruct \( \wh{G}_{i} \) and \( \wt{G}_{i} \) to their corresponding \( 4 \times 4 \) representations. From the above arguments, it can be seen that 
\begin{align*}
    &\lVert \wh{G}_{1} - \wt{G}_{1} \rVert 
    \le \sqrt{2} \, \gamma_{4}, \dspace 
    \lVert \wh{G}_{2} - \wt{G}_{2} \rVert 
    \le \sqrt{2} \, \gamma_{11}, \\ 
    &\lVert \wh{G}_{3} - \wt{G}_{3} \rVert 
    \le \sqrt{2} \, \gamma_{4}, \dspace 
    \lVert \wh{G}_{4} - \wt{G}_{4} \rVert 
    \le \sqrt{2} \, \gamma_{26} .
\end{align*}
By \cite[Lemma 3.6 and Problem 3.10]{hig02}, \( \wh{Q} = \sffl( \wh{G}_{1} \wh{G}_{2} \wh{G}_{3} \wh{G}_{4} ) \) and the orthogonal matrix \( \wt{Q} = \wt{G}_{1} \wt{G}_{2} \wt{G}_{3} \wt{G}_{4} \) satisfy 
\begin{align*}
    \lVert \wt{Q} - \wh{Q} \rVert
    &\le \lVert \wt{Q} - \wh{G}_{1} \wh{G}_{2} \wh{G}_{3} \wh{G}_{4} \rVert + \lVert \wh{G}_{1} \wh{G}_{2} \wh{G}_{3} \wh{G}_{4} - \wh{Q} \rVert \\
    &\le ( 1 + \sqrt{2} \gamma_{4} )( 1 + \sqrt{2} \gamma_{4} )( 1 + \sqrt{2} \gamma_{11} )( 1 + \sqrt{2} \gamma_{26} ) - 1 \\
    &\quad + ( 64 \macheps + O( \macheps^{2} ) ) \lVert \wh{G}_{1} \rVert \lVert \wh{G}_{2} \rVert \lVert \wh{G}_{3} \rVert \lVert \wh{G}_{4} \rVert .
\end{align*}
Ignoring the terms that can be bounded by \( O( \macheps^{2} ) \), we obtain  
\[
\lVert \wt{Q} - \wh{Q} \rVert 
\le ( 64 + 45\sqrt{2} ) \macheps 
\]
Now we have verified that the obtained \( \Delta \wh{X} \) and \( \wt{Q} \) satisfy (\ref{bounds for Delta X and Q}).  

By~(\ref{bounds for Delta X and Q}), we have 
\begin{align*}
    \lvert w \rvert 
    = \lvert T  \Delta \wh{x} \rvert 
    \le 5 \gamma_{6} \lvert T \rvert \lvert \wh{x} \rvert + 8 \wh{\sigma}_{2} \gamma_{6} \lvert T \rvert [ 0, 0, 1, 1 ]\trans . 
\end{align*} 
For convenience, we denote \( 5 \gamma_{6} \lvert T \rvert \lvert \wh{x} \rvert \) and \( 8 \wh{\sigma}_{2} \gamma_{6} \lvert T \rvert [ 0, 0, 1, 1 ]\trans \) by \( w_{1} \) and \( w_{2} \), respectively. By Lemma~\ref{lemma: norm inequalities}, the contribution of \( w_{2} \) to \( \wh{\psi}( A ) \) is bounded by  
\begin{align} \label{contribution of w2} 
    \frac{ 2 \lVert w_{2} \rVert_{\infty} }{ \sqrt{ 1 + 2\wh{\sigma}_{2}^{2} } } 
    \le \frac{ 16 \wh{\sigma}_{2} }{ \sqrt{1 + 2 \wh{\sigma}_{2}^{2}} } \cdot \gamma_{6} \lVert T \rVert_{\infty} 
    \le 32 \gamma_{6} \lVert A \rVert . 
\end{align} 

As for \( w_{1} \), recall that the bounds for \( \wh{\phi}(A) \) in Section~\ref{sec: contribution to error of solving linear system} are built on two estimates for \( r \): the componentwise bound~(\ref{componentwise bound for residual}) (including (\ref{case 1: contribution of a12})--(\ref{case 1: contribution of s23 and s41}), (\ref{case 2: contribution of a12})--(\ref{case 2: contribution of s41})), and the normwise bound~(\ref{infinite norm bound for residual}) (including (\ref{bound 0 for contribution of residual}), (\ref{bound for phi(A) in case 1}), (\ref{contribution of X_2}) and (\ref{bound for phi(A) in case 2})). Moreover, the bounds for \( \mymat{s}{i}{j} \) derived in Subsections~\ref{subsec: case 1} and~\ref{subsec: case 2}, under their respective assumptions, carry over directly to \( |\mymat{t}{i}{j}| \). By replacing \( \gamma_{12} \) or \( 16\rho_{4} \gamma_{12} \) with \( 5\gamma_{6} \) in corresponding situations, it follows that the contribution of \( w_{1} \) to \( \wh{\psi}( A ) \) is bounded by  
\begin{align} \label{contribution of w1}
    ( 490 + 240\sqrt{2} ) ( 1 + \epsilon_{r} ) \cdot \gamma_{6} \lVert A \rVert . 
\end{align}
This is from replacing \( 16 \rho_{4} \gamma_{12} \) in~(\ref{bound for phi(A) in case 2}) with \( 5\gamma_{6} \).

Summing the bounds~(\ref{contribution of w2}) and~(\ref{contribution of w1}), we conclude that for \( \wh{\kappa} > 4 \), \[
\wh{\psi}( A ) 
\le \big( ( 490 + 240\sqrt{2} ) ( 1 + \epsilon_{r} ) + 32 \big) \cdot \gamma_{6} \lVert A \rVert . 
\]

\begin{remark} 
    From the above analysis, it can be seen that the ordering of the first two Givens updates is critical for ensuring the existence of a \( \Delta \wh{X} \) satisfying (\ref{bounds for Delta X and Q}). In contrast, different orderings of Givens updates in another column merely result in different bound of the form \( O( \macheps ) \) for \( \lVert \wt{Q} - \wh{Q} \rVert \). In particular, the update orderings specified in~(\ref{order of rotations if col1>col2}) and~(\ref{order of rotations if col1<=col2}) are chosen to keep the constant factor in the bound for \( \lVert \wt{Q} - \wh{Q} \rVert \) as small as we can. 
\end{remark}

\section{Conclusion} \label{sec: conclusion}
In this paper, we propose the pivoted Givens QR as a substitute for the Householder QR employed in LAPACK subroutine \texttt{DLAEXC} to swap adjacent \( 2 \times 2 \) diagonal blocks in the real Schur form. Our error analysis suggests that, regarding the error measured by \( \lVert \mymat{\check{A}}{2}{1} \rVert \), this modified direct swapping algorithm can be used confidently in practice. We provide empirical evidence to illustrate the advantage of the proposed algorithm relative to existing methods from \cite{Bai1993,Camps19}. 

We conclude this paper with a few suggestions for future work. First, our experiments show that for all three algorithms we test, the relative errors of eigenvalues may still be large even when \( \tol \) is on the order of~\( \macheps \lVert A \rVert_{\max} \). These difficulties may be due to the fragility of eigenvalues: a normwise small perturbation of \( A \) can cause a large change in the eigenvalues of \( \mymat{A}{i}{i} \). Therefore, it is a challenging task to develop swapping algorithms with small relative errors of eigenvalues. Second, compared with the standard eigenvalue problem, reordering eigenvalues of the generalized eigenvalue problem~\cite{Camps19,Kagstrom93,Kressner06} poses a greater challenge in terms of numerical stability. Thus, designing a provably stable algorithm for the generalized eigenvalue problem remains an important subject.

\section*{Acknowledgments}
The authors thank Weiguo Gao and Linxi Sai for helpful discussions.

\section*{Declaration of AI usage}
The authors used DeepSeek-V4-Flash exclusively for language refinement and proofreading purposes.
The authors are responsible for the scientific contributions, including algorithm development and error analysis.

\bibliographystyle{plainnat}
\bibliography{ref}

\begin{thebibliography}{19}
\providecommand{\natexlab}[1]{#1}
\providecommand{\url}[1]{\texttt{#1}}
\expandafter\ifx\csname urlstyle\endcsname\relax
  \providecommand{\doi}[1]{doi: #1}\else
  \providecommand{\doi}{doi: \begingroup \urlstyle{rm}\Url}\fi

\bibitem[Bai and Demmel(1993)]{Bai1993}
Zhaojun Bai and James~W. Demmel.
\newblock On swapping diagonal blocks in real {S}chur form.
\newblock \emph{Linear Algebra Appl.}, 186:\penalty0 75--95, 1993.
\newblock \doi{10.1016/0024-3795(93)90286-W}.

\bibitem[Camps et~al.(2020)Camps, Mastronardi, Vandebril, and {Van
  Dooren}]{Camps19}
Daan Camps, Nicola Mastronardi, Raf Vandebril, and Paul {Van Dooren}.
\newblock Swapping 2 {$\times$} 2 blocks in the {S}chur and generalized {S}chur
  form.
\newblock \emph{J. Comput. Appl. Math.}, 373:\penalty0 112274, 2020.
\newblock \doi{10.1016/j.cam.2019.05.022}.

\bibitem[Cryer(1968)]{Cryer1968}
C.~W. Cryer.
\newblock Pivot size in {G}aussian elimination.
\newblock \emph{Numer. Math.}, 12\penalty0 (4):\penalty0 335--345, 1968.
\newblock \doi{10.1007/BF02162514}.

\bibitem[Dongarra et~al.(1992)Dongarra, Hammarling, and Wilkinson]{Dongarra92}
Jack~J. Dongarra, Sven Hammarling, and James~H. Wilkinson.
\newblock Numerical considerations in computing invariant subspaces.
\newblock \emph{SIAM J. Matrix Anal. Appl.}, 13:\penalty0 145--161, 1992.
\newblock \doi{10.1137/0613013}.

\bibitem[Fokkema et~al.(1998)Fokkema, Sleijpen, and Van~der Vorst]{Fokkema98}
Diederik~R. Fokkema, Gerard L.~G. Sleijpen, and Henk~A. Van~der Vorst.
\newblock {J}acobi--{D}avidson style {QR} and {QZ} algorithms for the reduction
  of matrix pencils.
\newblock \emph{SIAM J. Sci. Comput.}, 20\penalty0 (1):\penalty0 94--125, 1998.
\newblock \doi{10.1137/S1064827596300073}.

\bibitem[Golub and Van~Loan(2013)]{Golub13}
Gene~H. Golub and Charles~F. Van~Loan.
\newblock \emph{Matrix Computations}.
\newblock Johns Hopkins University Press, Philadelphia, PA, USA, 4th edition,
  2013.

\bibitem[Higham(1993)]{hig93}
Nicholas~J. Higham.
\newblock Perturbation theory and backward error for {$AX-XB=C$}.
\newblock \emph{BIT}, 33:\penalty0 124--136, 1993.
\newblock \doi{10.1007/BF01990348}.

\bibitem[Higham(2002)]{hig02}
Nicholas~J. Higham.
\newblock \emph{Accuracy and Stability of Numerical Algorithms}.
\newblock SIAM, Philadelphia, PA, USA, second edition, 2002.
\newblock \doi{10.1137/1.9780898718027}.

\bibitem[K{\aa}gstr{\"o}m(1993)]{Kagstrom93}
B.~K{\aa}gstr{\"o}m.
\newblock A direct method for reordering eigenvalues in the generalized real
  schur form of a regular matrix pair {$(A, B)$}.
\newblock In Marc~S. Moonen, Gene~H. Golub, and Bart L.~R. De~Moor, editors,
  \emph{Linear Algebra for Large Scale and Real-Time Applications}, pages
  195--218. Springer, Dordrecht, Netherlands, 1993.
\newblock \doi{10.1007/978-94-015-8196-7\_11}.

\bibitem[Kressner(2006)]{Kressner06}
Daniel Kressner.
\newblock Block algorithms for reordering standard and generalized {S}chur
  forms.
\newblock \emph{ACM Trans. Math. Softw.}, 32\penalty0 (4):\penalty0 521--532,
  2006.
\newblock \doi{10.1145/1186785.1186787}.

\bibitem[Parlett and Ng(1988)]{Parlett88}
Beresford~N. Parlett and Kwee~Chye Ng.
\newblock Development of an accurate algorithm for {EXP($Bt$)}, {Part~I}:
  {P}rograms to swap diagonal blocks.
\newblock Internal Report CPAM-294, Univ. of California, Berkeley, 1988.

\bibitem[Sai(2022)]{Sai2022}
Linxi Sai.
\newblock Swapping eigenvalues in real {S}chur form and its rounding error
  analysis.
\newblock Master's thesis, Fudan University, 2022.
\newblock (In Chinese).

\bibitem[Stewart(1976)]{Stewart76}
G.~W. Stewart.
\newblock Algorithm 506: {HQR}3 and {EXCHNG}: {F}ortran subroutines for
  calculating and ordering the eigenvalues of a real upper {H}essenberg matrix
  [{F}2].
\newblock \emph{ACM Trans. Math. Softw.}, 2\penalty0 (3):\penalty0 275--280,
  1976.
\newblock \doi{10.1145/355694.355700}.

\bibitem[Stewart(2002{\natexlab{a}})]{Stewart02}
G.~W. Stewart.
\newblock A {K}rylov--{S}chur algorithm for large eigenproblems.
\newblock \emph{SIAM J. Matrix Anal. Appl.}, 23\penalty0 (3):\penalty0
  601--614, 2002{\natexlab{a}}.
\newblock \doi{10.1137/S0895479800371529}.

\bibitem[Stewart(2002{\natexlab{b}})]{Stewart02_2}
G.~W. Stewart.
\newblock Addendum to ``{A} {K}rylov--{S}chur algorithm for large
  eigenproblems''.
\newblock \emph{SIAM J. Matrix Anal. Appl.}, 24\penalty0 (2):\penalty0
  599--601, 2002{\natexlab{b}}.
\newblock \doi{10.1137/S0895479802403150}.

\bibitem[Van~Dooren(1981)]{Dooren81}
P.~Van~Dooren.
\newblock A generalized eigenvalue approach for solving {R}iccati equations.
\newblock \emph{SIAM J. Sci. Statist. Comput.}, 2\penalty0 (2):\penalty0
  121--135, 1981.
\newblock \doi{10.1137/0902010}.

\bibitem[Van~Dooren(1982)]{Dooren82}
P.~Van~Dooren.
\newblock Algorithm 590: {DSUBSP} and {EXCHQZ}: {F}ortran subroutines for
  computing deflating subspaces with specified spectrum.
\newblock \emph{ACM Trans. Math. Softw.}, 8\penalty0 (4):\penalty0 376--382,
  1982.
\newblock \doi{10.1145/356012.356017}.

\bibitem[Wedin(1972)]{Wedin72}
Per-\AA{ke}. Wedin.
\newblock Perturbation bounds in connection with singular value decomposition.
\newblock \emph{BIT}, 12\penalty0 (1):\penalty0 99--111, 1972.
\newblock \doi{10.1007/BF01932678}.

\bibitem[Wilkinson(1961)]{Wilkinson61}
J.~H. Wilkinson.
\newblock Error analysis of direct methods of matrix inversion.
\newblock \emph{J. ACM}, 8\penalty0 (3):\penalty0 281--330, 1961.
\newblock \doi{10.1145/321075.321076}.

\end{thebibliography}

\begin{appendices} 
\section{Appendix} \label{appendix}

\subsection{Proof of Lemma~\ref{lemma: perturbations of GR}}
\label{app: Proof of Lemma perturbations of GR}
\begin{proof}
If \( v_{1} = 0 \), then \( \wt{v}_{1} = 0 \) and the result is trivial. Assume \( v_{1} \ne 0 \). Let 
\[ 
\mygiv{\wt{v}_{1}}{\wt{v}_{2}}
 = \begin{bmatrix}
     \wt{c} & \wt{s} \\
    -\wt{s} & \wt{c}
 \end{bmatrix} .
\]
By~\cite[Lemma 19.7]{hig02}, we have
\[
\lvert \wh{c} - \wt{c} \rvert \le \gamma_{4} \lvert \wt{c} \rvert, \dspace 
\lvert \wh{s} - \wt{s} \rvert \le \gamma_{4} \lvert \wt{s} \rvert .
\]
Since  
\[
\Big\lvert \frac{ \wt{c} }{ c } - 1 \Big\rvert 
= \Big\lvert \frac{ \wt{v}_{1} }{ v_{1} } \cdot \sqrt{ \frac{ v_{1}^{2} + v_{2}^{2} }{ \wt{v}_{1}^{2} + \wt{v}_{2}^{2} } } - 1 \Big\rvert 
\le \gamma_{2k} ,
\] 
it immediately follows that \( \lvert \wh{c} - c \rvert \le \gamma_{2k+4} \lvert c \rvert \). In an analogous way, we can obtain \( \lvert \wh{s} - s \rvert \le \gamma_{2k+4} \lvert s \rvert \).

The second statement follows directly from the standard floating point arithmetic error model and we omit details.
\end{proof} 

\subsection{Proof of Lemma~\ref{lemma: componentwise backward error result on GR}} \label{app: proof of Lemma componentwise backward error result on GR}  
\begin{proof} 
Without loss of generality, we assume \( \lvert \mymat{x}{1}{1} \rvert \) is the largest entry in \( \lvert X \rvert \). If \( \mymat{x}{2}{1} = 0 \), then \( G = I_{2} \) and the result is trivial. Suppose \( \mymat{x}{2}{1} \ne 0 \). Denote  
\[
G = \begin{bmatrix}
    c & s \\
    -s & c
\end{bmatrix}, \dspace 
\wh{G} = \begin{bmatrix}
    \wh{c} & \wh{s} \\
    -\wh{s} & \wh{c} 
\end{bmatrix} .
\]
By~\cite[Lemma 19.7]{hig02}, we have
\[
\lvert \wh{c} - c \rvert \le \gamma_{4} \lvert c \rvert, \dspace 
\lvert \wh{s} - s \rvert \le \gamma_{4} \lvert s \rvert .
\]
Suppose 
\begin{align} \label{eq: Givens backward error}
    \sffl \Big(
    \begin{bmatrix}
        \wh{c} & -\wh{s} \\
        \wh{s} &  \wh{c} 
    \end{bmatrix}
    \begin{bmatrix}
        \mymat{x}{1}{2}  \\
        \mymat{x}{2}{2} 
    \end{bmatrix}
    \Big) = 
    \begin{bmatrix}
        c & -s \\
        s &  c
    \end{bmatrix}
    \begin{bmatrix}
    \mymat{x}{1}{2} + \Delta \mymat{x}{1}{2} \\
    \mymat{x}{2}{2} + \Delta \mymat{x}{2}{2}
    \end{bmatrix} .
\end{align}
Then there exist \( \nu_{6} \), \( \nu_{6}^{\prime} \) such that \( \lvert \nu_{6} \rvert \), \( \lvert \nu_{6}^{\prime} \rvert \le \gamma_{6} \) and 
\[
\sffl( \wh{c}\mymat{x}{1}{2} - \wh{s}\mymat{x}{2}{2} ) = c\mymat{x}{1}{2} - s\mymat{x}{2}{2} + \nu_{6}c\mymat{x}{1}{2} + \nu_{6}^{\prime}s\mymat{x}{2}{2} 
\coloneqq c\mymat{x}{1}{2} - s\mymat{x}{2}{2} + \Delta_{1} .
\]
Similarly, we have 
\[
\sffl( \wh{s}\mymat{x}{1}{2} + \wh{c}\mymat{x}{2}{2} ) = s\mymat{x}{1}{2} + c\mymat{x}{2}{2} + \Delta_{2} ,
\]
where \( \Delta_{2} = \mu_{6}s \mymat{x}{1}{2} + \mu_{6}^{\prime}c \mymat{x}{2}{2} \) with \( \lvert \mu_{6} \rvert \), \( \lvert \mu_{6}^{\prime} \rvert \le \gamma_{6} \). Comparing both sides of (\ref{eq: Givens backward error}), we obtain 
\[
\Delta \mymat{x}{1}{2} = c \Delta_{1} + s \Delta_{2}, \dspace 
\Delta \mymat{x}{2}{2} = -s \Delta_{1} + c \Delta_{2} .
\]
Substituting the expressions of \( \Delta_{1} \) and \( \Delta_{2} \) into the above equations, we obtain 
\begin{align*}
    & \lvert \Delta \mymat{x}{1}{2} \rvert
    = \lvert \nu_{6} c^{2} \mymat{x}{1}{2} + \nu_{6}^{\prime}cs \mymat{x}{2}{2} + ( \mu_{6} s^{2} \mymat{x}{1}{2} + \mu_{6}^{\prime}cs \mymat{x}{2}{2} ) \rvert
    \le \gamma_{6} ( \lvert \mymat{x}{1}{2} \rvert + 2 \lvert cs \rvert \cdot \lvert \mymat{x}{2}{2} \rvert ) , \\
    & \lvert \Delta \mymat{x}{2}{2} \rvert 
    = \lvert \nu_{6} cs \mymat{x}{1}{2} + \nu_{6}^{\prime}s^{2} \mymat{x}{2}{2} - ( \mu_{6} cs \mymat{x}{1}{2} + \mu_{6}^{\prime} c^{2} \mymat{x}{2}{2} ) \rvert
    \le \gamma_{6} ( \lvert \mymat{x}{2}{2} \rvert + 2 \lvert cs \rvert \cdot \lvert \mymat{x}{1}{2} \rvert ) .
\end{align*}

If \( \sigma_{2}( X ) = 0 \), the \( 2 \times 2 \) matrix \( X \) satisfies \( \mymat{x}{1}{1} \mymat{x}{2}{2} = \mymat{x}{1}{2} \mymat{x}{2}{1} \). Then from  
\[
cs = -\frac{ \mymat{x}{1}{1}\mymat{x}{2}{1} }{ \mymat[2]{x}{1}{1} + \mymat[2]{x}{2}{1} } 
= -\frac{ \mymat{x}{1}{2}\mymat{x}{2}{2} } { \mymat[2]{x}{1}{2} + \mymat[2]{x}{2}{2} } ,
\]
it follows that \( \lvert cs \rvert \cdot \lvert \mymat{x}{2}{2} \rvert \le \lvert \mymat{x}{1}{2} \rvert \), \( \lvert cs \rvert \cdot \lvert \mymat{x}{1}{2} \rvert \le \lvert \mymat{x}{2}{2} \rvert \), and the result holds. 

Now assume \( \sigma_{2}( X ) > 0 \). By a simple calculation, we obtain  
\[
X^{-1} = \frac{ 1 }{ \mymat{x}{1}{1}\mymat{x}{2}{2} - \mymat{x}{1}{2}\mymat{x}{2}{1} }
\begin{bmatrix}
    \mymat{x}{2}{2} & -\mymat{x}{1}{2} \\
   -\mymat{x}{2}{1} &  \mymat{x}{1}{1}
\end{bmatrix} .
\]
From \( \lVert X^{-1} \rVert \le 2 \lVert X^{-1} \rVert_{\max} \) it follows 
\[
\sigma_{2}( X ) = \frac{ 1 }{ \lVert X^{-1} \rVert } 
\ge \frac{ \lvert \mymat{x}{1}{1}\mymat{x}{2}{2} - \mymat{x}{1}{2}\mymat{x}{2}{1} \rvert }{ 2\lvert \mymat{x}{1}{1} \rvert }
= \frac{1}{2} \Big\lvert \mymat{x}{2}{2} - \frac{ \mymat{x}{1}{2}\mymat{x}{2}{1} }{ \mymat{x}{1}{1} } \Big\rvert . 
\]
Next we discuss the cases \( \lvert \mymat{x}{2}{2} \rvert > \lvert \mymat{x}{1}{2} \rvert \) and \( \lvert \mymat{x}{2}{2} \rvert \le \lvert \mymat{x}{1}{2} \rvert \), respectively.
    \begin{itemize}
        \item When \( \lvert \mymat{x}{2}{2} \rvert > \lvert \mymat{x}{1}{2} \rvert \), we only need to focus on \( \lvert cs \rvert \cdot \lvert \mymat{x}{2}{2} \rvert \). If \( \lvert \mymat{x}{2}{2} \rvert \le 2 \lvert \mymat{x}{1}{2} \rvert \), then from \( 2\lvert cs \rvert \le 1 \) it follows \( \lvert cs \rvert \cdot \lvert \mymat{x}{2}{2} \rvert \le \lvert \mymat{x}{1}{2} \rvert \) and the result holds. Now assume \( \lvert \mymat{x}{2}{2} \rvert > 2 \lvert \mymat{x}{1}{2} \rvert \). Since \( \lvert \mymat{x}{1}{1} \rvert \) is the largest entry of \( \lvert X \rvert \), 
        \[
        \Big\lvert \mymat{x}{2}{2} - \frac{ \mymat{x}{1}{2}\mymat{x}{2}{1} }{ \mymat{x}{1}{1} } \Big\rvert 
        > \frac{ \lvert \mymat{x}{2}{2} \rvert  }{ 2 } 
        \dspace \Longrightarrow \dspace 
        \lvert \mymat{x}{2}{2} \rvert 
        < 4\sigma_{2}( X ) , 
        \]
        which implies the result.
        \item When \( \lvert \mymat{x}{2}{2} \rvert \le \lvert \mymat{x}{1}{2} \rvert \), we only need to focus on \( \lvert cs \rvert \cdot \lvert \mymat{x}{1}{2} \rvert \). If \( \lvert \mymat{x}{1}{2}\mymat{x}{2}{1} \rvert \le 2 \lvert \mymat{x}{1}{1}\mymat{x}{2}{2} \rvert \), then
        \[
        \lvert cs \rvert \cdot \lvert \mymat{x}{1}{2} \rvert 
        \le \Big\lvert \frac{ \mymat{x}{1}{1}\mymat{x}{2}{1} }{ \mymat[2]{x}{1}{1} + \mymat[2]{x}{2}{1} } \cdot \frac{ 2\mymat{x}{1}{1}\mymat{x}{2}{2} }{ \mymat{x}{2}{1} } \Big\rvert
        \le 2 \lvert \mymat{x}{2}{2} \rvert . 
        \]
        If \( \lvert \mymat{x}{1}{2}\mymat{x}{2}{1} \rvert > 2 \lvert \mymat{x}{1}{1}\mymat{x}{2}{2} \rvert \), then
        \[
        \Big\lvert \mymat{x}{2}{2} - \frac{ \mymat{x}{1}{2}\mymat{x}{2}{1} }{ \mymat{x}{1}{1} } \Big\rvert 
        > \frac{ \lvert \mymat{x}{1}{2}\mymat{x}{2}{1} \rvert }{ 2 \lvert \mymat{x}{1}{1} \rvert }
        \dspace \Longrightarrow \dspace 
        \lvert \mymat{x}{1}{2} \rvert 
        < \frac{ 4 \lvert \mymat{x}{1}{1} \rvert }{ \lvert \mymat{x}{2}{1} \rvert } \sigma_{2}( X ) .
        \]
        Thus,
        \[
        \lvert cs \rvert \cdot \lvert \mymat{x}{1}{2} \rvert 
        \le \Big\lvert \frac{ \mymat{x}{1}{1}\mymat{x}{2}{1} }{ \mymat[2]{x}{1}{1} + \mymat[2]{x}{2}{1} } \cdot \frac{ 4\mymat{x}{1}{1} }{ \mymat{x}{2}{1} } \Big\rvert \sigma_{2}( X ) 
        \le 4 \sigma_{2}( X ) 
        \]
        and the result still holds. 
\end{itemize} 
Hence in either case we have verified the result.
\end{proof}

\subsection{Proof of Lemma~\ref{lemma: stability of SVD}} \label{app: proof of stability of SVD}
\begin{proof}
We need the following theorem to complete the proof.
\begin{theorem}[Wedin~\cite{Wedin72}] \label{theo: Wedin SVD}
    Suppose \( X \), \( \Delta X \in \mathbb{R}^{n \times n} \). Let 
    \[
    X = [ U_{1},~ U_{2} ] \begin{bmatrix}
    \Sigma_{1} & 0 \\
    0          & \Sigma_{2}
    \end{bmatrix} \begin{bmatrix}
    V_{1}\trans \\
    V_{2}\trans
    \end{bmatrix}, \dspace
    \wt{X} = [ \wt{U}_{1},~\wt{U}_{2} ] \begin{bmatrix}
    \wt{\Sigma}_{1} & 0 \\
                 0  & \wt{\Sigma}_{2}
    \end{bmatrix} \begin{bmatrix}
    \wt{V}_{1}\trans \\
    \wt{V}_{2}\trans
    \end{bmatrix}
    \]
    be the \textup{SVD} of \( X \) and \( \wt{X} = X + \Delta X \), where \( \Sigma_{1} \), \( \wt{\Sigma}_{1} \in \mathbb{R}^{r \times r} \), \( U_{1} \), \( \wt{U}_{1} \in \mathbb{R}^{n \times r} \), \( V_{1} \), \( \wt{V}_{1} \in \mathbb{R}^{n \times r} \). Let \( \wt{\sigma}_{r} \) be the smallest diagonal entry of \( \wt{\Sigma}_{1} \) and \( \sigma_{r+1} \) be the largest diagonal entry of \( \Sigma_{2} \). If there exist \( \alpha \ge 0 \), \( \delta > 0 \) such that 
    \( \wt{\sigma}_{r} \ge \alpha + \delta \) and \( \sigma_{r+1} \le \alpha \),
    then set \( \epsilon = \lVert ( \Delta X ) \wt{V}_{1} \rVert \vee \lVert ( \Delta X )\trans\wt{U}_{1} \rVert \) and we have
    \[
    \lVert \sin \theta \big( \wt{U}_{1}, U_{1} \big) \rVert 
    \le \frac{ \epsilon }{ \delta }, \dspace
    \lVert \sin \theta \big( \wt{V}_{1}, V_{1} \big) \rVert 
    \le \frac{ \epsilon }{ \delta }.
    \]
\end{theorem}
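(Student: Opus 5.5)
Wedin's theorem is a classical result quoted inside the proof of Lemma~\ref{lemma: stability of SVD}, so the plan is to prove it rather than merely apply it. I would use the standard $\sin\theta$ argument via residuals.

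\textbf{Residual identities.} Write the exact decompositions
\[
X V_2 = U_2\Sigma_2, \qquad X\trans U_2 = V_2\Sigma_2,
\]
\[
\wt{X}\wt{V}_1 = \wt{U}_1\wt{\Sigma}_1, \qquad \wt{X}\trans \wt{U}_1 = \wt{V}_1\wt{\Sigma}_1 .
\]
Project the perturbed relations onto the complementary unperturbed subspaces by multiplying on the left by $U_2\trans$ and $V_2\trans$. Using $\wt{X} = X + \Delta X$, this gives
\[
U_2\trans \wt{U}_1 \wt{\Sigma}_1 = \Sigma_2 V_2\trans \wt{V}_1 + U_2\trans(\Delta X)\wt{V}_1,
\]
\[
V_2\trans \wt{V}_1 \wt{\Sigma}_1 = \Sigma_2 U_2\trans \wt{U}_1 + V_2\trans(\Delta X)\trans\wt{U}_1 .
\]
Set $S_U = U_2\trans\wt{U}_1$ and $S_V = V_2\trans\wt{V}_1$. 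Their spectral norms are exactly $\lVert\sin\theta(\wt{U}_1,U_1)\rVert$ and $\lVert\sin\theta(\wt{V}_1,V_1)\rVert$.

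\textbf{Coupled inequalities.} Since $\wt{\Sigma}_1$ is nonsingular with smallest singular value $\wt{\sigma}_r$, we have $\lVert S_U\wt{\Sigma}_1\rVert \ge \wt{\sigma}_r\lVert S_U\rVert$. Combining this with $\lVert\Sigma_2\rVert\le\alpha$ and the definition of $\epsilon$ gives
\[
\wt{\sigma}_r\lVert S_U\rVert \le \alpha\lVert S_V\rVert + \epsilon, \qquad \wt{\sigma}_r\lVert S_V\rVert \le \alpha\lVert S_U\rVert + \epsilon .
\]

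\textbf{Decoupling.} Let $m = \max\{\lVert S_U\rVert,\lVert S_V\rVert\}$ and take the larger of the two inequalities. This yields $\wt{\sigma}_r m \le \alpha m + \epsilon$. Since $\wt{\sigma}_r - \alpha \ge \delta > 0$, it follows that $m \le \epsilon/\delta$, which is the claim for both subspaces.

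\textbf{Main obstacle.} The only delicate point is the lower bound $\lVert S\wt{\Sigma}_1\rVert \ge \wt{\sigma}_r\lVert S\rVert$ in the spectral norm. It holds because $\lVert S\rVert = \lVert S\wt{\Sigma}_1\wt{\Sigma}_1^{-1}\rVert \le \lVert S\wt{\Sigma}_1\rVert/\wt{\sigma}_r$. The other point to check is the sign convention: the $\sin\theta$ matrix must be identified with $U_2\trans\wt{U}_1$, and the same for $V$. The paper then uses this with $n=2$, $r=1$.
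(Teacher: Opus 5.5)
Your proof is correct. The residual identities, the lower bound $\lVert S\wt{\Sigma}_1\rVert \ge \wt{\sigma}_r\lVert S\rVert$, the identification of $\lVert U_2\trans\wt{U}_1\rVert$ with the sine of the largest canonical angle, and the max-decoupling step are all valid; the argument even gives the slightly sharper bound $\epsilon/(\wt{\sigma}_r-\alpha)$.

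The paper itself gives no proof. It quotes the result from Wedin~\cite{Wedin72} and uses it only with $n=2$, $r=1$ in the proof of Lemma~\ref{lemma: stability of SVD}. Your argument is the standard proof of Wedin's $\sin\theta$ theorem in the spectral norm, with the residuals taking the form $-(\Delta X)\wt{V}_1$ and $-(\Delta X)\trans\wt{U}_1$.
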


Now we continue the proof of Lemma~\ref{lemma: stability of SVD}. The assumption \( \lVert \Delta X \rVert \le c \lVert X \rVert \) implies  
\[
\lVert \wt{X} \rVert
\ge \lVert X \rVert - \lVert \Delta X \rVert
\ge ( 1 - c ) \lVert X \rVert .
\]
Hence, we can take \( \alpha = \sigma_{2}( X ) \) and
\[
\delta 
= \lVert \wt{X} \rVert - \alpha  
\ge \big( 1 - c - \kappa^{-1}( X ) \big) \lVert X \rVert .
\]
Since \( \epsilon = \lVert ( \Delta X ) \wt{v}_{1} \rVert \vee \lVert ( \Delta X )\trans \wt{u}_{1} \rVert \le \lVert \Delta X \rVert \), by Theorem~\ref{theo: Wedin SVD} we obtain 
\begin{align*}
\lVert \wt{u}_{1} - ( \wt{u}_{1}\trans u_{1} ) u_{1} \rVert 
&= \lvert \sin \theta ( \wt{u}_{1}, u_{1} ) \rvert  
\le \frac{ \lVert \Delta X \rVert }{ \big( 1 - c - \kappa^{-1}( X ) \big) \lVert X \rVert } \\
&\le \frac{ c }{ 1 - c - \kappa^{-1}( X ) } .
\end{align*}
Notice that the first equality follows from 
\begin{align*}
\lVert \wt{u}_{1} - ( \wt{u}_{1}\trans u_{1} ) u_{1} \rVert^{2} 
&= \langle \wt{u}_{1} - ( \wt{u}_{1}\trans u_{1} ) u_{1}, \wt{u}_{1} - ( \wt{u}_{1}\trans u_{1} ) u_{1} \rangle \\ 
&= 1 - 2\big( \cos \theta( \wt{u}_{1}, u_{1} ) \big)^{2} + \big( \cos \theta( \wt{u}_{1}, u_{1} ) \big)^{2} 
= \big( \sin \theta( \wt{u}_{1}, u_{1} ) \big)^{2} .
\end{align*} 

Without loss of generality, we assume the angle between \( \wt{u}_{1} \) and \( u_{1}  \) is acute. Otherwise, replace \( u_{1} \) with \( -u_{1} \) in the following argument. Then 
\[
\lVert u_{1} - ( \wt{u}_{1}\trans u_{1} ) u_{1} \rVert 
= 1 - \cos \theta( \wt{u}_{1}, u_{1} ) 
\le \big( \sin \theta( \wt{u}_{1}, u_{1} ) \big)^{2} 
\le \frac{ c^{2} }{ \big( 1 - c - \kappa^{-1}( X ) \big)^{2} } .
\]
Using the above results, we obtain that for \( i = 1, 2 \),
\[
\lvert \mymat{\wt{u}}{i}{1} \rvert - \lvert \mymat{u}{i}{1} \rvert 
\le \lVert u_{1} - \wt{u}_{1} \rVert \le \lVert u_{1} - ( \wt{u}_{1}\trans u_{1} ) u_{1} \rVert + \lVert \wt{u}_{1} - ( \wt{u}_{1}\trans u_{1} ) u_{1} \rVert 
\le d + d^{2} , 
\]
where \( d = c / ( 1 - c - \kappa^{-1}( X ) ) \). 
Our result for \( \mymat{\wt{u}}{i}{j} \) then follows from the fact that \( \lvert \mymat{u}{1}{1} \rvert = \lvert \mymat{u}{2}{2} \rvert \), \( \lvert \mymat{u}{2}{1} \rvert = \lvert \mymat{u}{1}{2} \rvert \) for any \( 2 \times 2 \) orthogonal matrix \( U \). Analogously, we can prove the result for \( \mymat{\wt{v}}{i}{j} \). 
\end{proof}

\subsection{Proof of Proposition~\ref{prop: simplify A}}
\begin{proof}
    This proof stems from the following fact. Let \( P_{1} \), \( P_{2} \in \mathbb{R}^{n \times n} \) be two permutations. Then for \( T \in \mathbb{R}^{n \times n} \) and \( b \in \mathbb{R}^{n} \), the computed solutions \( \wh{x} \) and \( \wh{y} \) obtained by GECP to linear systems \( Tx = b \) and \( P_{1}TP_{2}\trans y = P_{1}b \) satisfy \( \wh{y} = P_{2} \wh{x} \) and \( r_{\wh{y}} = P_{1}( TP_{2}\trans \wh{y} - b ) = P_{1} r_{\wh{x}} \).

    Notice that the diagonal entries of \( \mymat{A}{1}{1} \) are the same. Thus, for \( B \) we have 
    \begin{align*}
        T_{B} = I_{2} \otimes \mymat[\top]{A}{1}{1} - \mymat[\top]{A}{2}{2} \otimes I_{2} 
        &= ( I_{2} \otimes P )( I_{2} \otimes \mymat{A}{1}{1} - \mymat[\top]{A}{2}{2} \otimes I_{2} ) ( I_{2} \otimes P ) \\
        &= ( I_{2} \otimes P ) T_{A} ( I_{2} \otimes P )
    \end{align*}
    and \( \sfvec( P\mymat{A}{1}{2} ) = ( I_{2} \otimes P ) \sfvec( \mymat{A}{1}{2} ) \), which imply \( \wh{Y} = P\wh{X} \) and \( R_{\wh{Y}} = P R_{\wh{X}} \). Then it is not hard to see the results for \( B \) hold. In a similar manner, we can prove the results for \( C \). 
    
    For \( D \), a direct computation yields 
    \begin{align*}
        T_{D} &= -I_{2} \otimes \mymat[\top]{A}{2}{2} + \mymat{A}{1}{1} \otimes I_{2} \\
        &= \begin{bmatrix}
            \mymat{a}{1}{1} - \mymat{a}{3}{3} & -\mymat{a}{4}{3} & \mymat{a}{1}{2} & 0 \\
            -\mymat{a}{3}{4} & \mymat{a}{1}{1} - \mymat{a}{3}{3} & 0 & \mymat{a}{1}{2} \\
            \mymat{a}{2}{1} & 0 & \mymat{a}{1}{1} - \mymat{a}{3}{3} & -\mymat{a}{4}{3} \\
            0 & \mymat{a}{2}{1} & -\mymat{a}{3}{4} & \mymat{a}{1}{1} - \mymat{a}{3}{3}  
        \end{bmatrix}
    \end{align*}
    Thus, \( T_{D} = QTQ \), where \( Q \) is the permutation obtained by swapping rows 2 and 3 of \( I_{4} \). Combining with \( \sfvec( \mymat[\top]{A}{1}{2} ) = Q\sfvec( \mymat{A}{1}{2} ) \), we have \( \wh{W} = \wh{X}\trans \) and \( R_{\wh{W}} = R_{\wh{X}}\trans \). Then the assertions for \( D \) hold.
\end{proof}

\subsection{Proof of Proposition~\ref{prop: estimate for tau}}
\begin{proof} 
    From (\ref{eq: reduced residual equation}), it follows that 
    \begin{align} \label{eq: componentwise equations for hatA12}
        \begin{bmatrix}
            \wh{\sigma}_{1}\mymat{\wh{a}}{1}{1} - \wh{\sigma}_{1}\mymat{\wh{a}}{3}{3} & \wh{\sigma}_{2}\mymat{\wh{a}}{1}{2} - \wh{\sigma}_{1}\mymat{\wh{a}}{3}{4}  \\
            \wh{\sigma}_{1}\mymat{\wh{a}}{2}{1} - \wh{\sigma}_{2}\mymat{\wh{a}}{4}{3} & \wh{\sigma}_{2}\mymat{\wh{a}}{2}{2} - \wh{\sigma}_{2}\mymat{\wh{a}}{4}{4}  
        \end{bmatrix}
    =   \begin{bmatrix}
            \mymat{\wh{a}}{1}{3} & \mymat{\wh{a}}{1}{4} \\
            \mymat{\wh{a}}{2}{3} & \mymat{\wh{a}}{2}{4}
        \end{bmatrix}. 
    \end{align}
    Since \( \lVert R \rVert = \epsilon_{r} \lVert A \rVert \), for all \( \mymat{\wh{a}}{i}{j} \) in \( \mymat{\wh{A}}{1}{2} \), we have
    \[
    \lvert \mymat{\wh{a}}{i}{j} \rvert 
    \le \lVert \mymat{\wh{A}}{1}{2} \rVert 
    \le \lVert \mymat{A}{1}{2} \rVert + \lVert R \rVert  
    \le ( 1 + \epsilon_{r} ) \lVert A \rVert .
    \]
    This fact will be used multiple times in the subsequent proof.

    If \( \lvert \mymat{\wh{a}}{1}{2} \rvert > \lvert \mymat{\wh{a}}{2}{1} \rvert \), notice that 
    \[
    \frac{ \lvert \mymat{\wh{a}}{2}{3} \rvert }{ \lvert \mymat{\wh{a}}{1}{2} \rvert } 
    = \frac{ \lvert \wh{\sigma}_{1} \mymat{\wh{a}}{2}{1} - \wh{\sigma}_{2} \mymat{\wh{a}}{4}{3} \rvert }{ \lvert \mymat{\wh{a}}{1}{2} \rvert } 
    \ge \Big( \frac{ \lvert \mymat{\wh{a}}{2}{1} \rvert }{ \lvert \mymat{\wh{a}}{1}{2} \rvert } - \frac{1}{ \wh{\kappa} } \cdot \frac{ \lvert \mymat{\wh{a}}{4}{3} \rvert }{ \lvert \mymat{\wh{a}}{1}{2} \rvert } \Big) \wh{\sigma}_{1} 
    \]
    By~(\ref{order realation between off-diagonal entries}) and~(\ref{def: eta}), we obtain 
    \begin{align} \label{hat a23 / hat a12}
        \frac{ \lvert \mymat{\wh{a}}{2}{3} \rvert }{ \lvert \mymat{\wh{a}}{1}{2} \rvert } \ge \Big( \frac{1}{\eta_{1}} - \frac{1}{\wh{\kappa}} \Big) \wh{\sigma}_{1} .
    \end{align} 
    When \( \wh{\kappa} \le 2\eta_{1} \), it follows that 
    \[
    \frac{ \lvert \mymat{\wh{a}}{1}{2} \rvert }{ \lVert A \rVert } 
    \le \frac{ \lvert \mymat{\wh{a}}{1}{2} \rvert }{ \lVert \mymat{A}{1}{1} \rVert }
    = \frac{ \lvert \mymat{\wh{a}}{1}{2} \rvert }{ \lVert \mymat{\wh{A}}{1}{1} \rVert } 
    \le 1 \le \frac{2}{ \wh{\kappa} } \eta_{1} . 
    \]
    When \( \wh{\kappa} > 2\eta_{1} \), by~(\ref{hat a23 / hat a12}) we have 
    \[
    \frac{ \lvert \mymat{\wh{a}}{1}{2} \rvert }{ \lVert A \rVert } 
    \le ( 1 + \epsilon_{r} ) \frac{ \lvert \mymat{\wh{a}}{1}{2} \rvert }{ \lvert \mymat{\wh{a}}{2}{3} \rvert } 
    \le \Big( \frac{1}{\eta_{1}} - \frac{1}{\wh{\kappa}} \Big)^{-1} \cdot \frac{ 1 + \epsilon_{r} }{ \wh{\sigma}_{1} } 
    < \frac{ 2( 1 + \epsilon_{r} ) }{ \wh{\sigma}_{1} } \eta_{1} .  
    \]
    Thus, for \( \tau = \lvert \mymat{\wh{a}}{1}{2} \rvert \), in either case we have 
    \[
    \lvert \mymat{\wh{a}}{1}{2} \rvert \le \frac{ 2( 1 + \epsilon_{r} ) }{ \wh{\calK} } \lVert A \rVert \eta_{1} .
    \]

    If \( \lvert \mymat{\wh{a}}{1}{2} \rvert \le \lvert \mymat{\wh{a}}{2}{1} \rvert \), notice that by~(\ref{order realation between off-diagonal entries}) and \( \wh{\kappa} > 4 \), 
    \[
    \frac{ \lvert \mymat{\wh{a}}{2}{3} \rvert }{ \lvert \mymat{\wh{a}}{2}{1} \rvert } 
    = \frac{ \lvert \mymat{\wh{a}}{2}{1}\wh{\sigma}_{1} - \mymat{\wh{a}}{4}{3}\wh{\sigma}_{2} \rvert }{ \lvert \mymat{\wh{a}}{2}{1} \rvert} 
    \ge \Big( 1 - \frac{1}{ \wh{\kappa} } \Big)\wh{\sigma}_{1}
    \ge \frac{ 3 }{ 4 } \wh{\sigma}_{1} .
    \]
    Thus, for \( \tau = \lvert \mymat{\wh{a}}{2}{1} \rvert \), we have 
    \[
    \lvert \mymat{\wh{a}}{2}{1} \rvert \le \frac{ 4( 1 + \epsilon_{r} ) }{ 3\wh{\sigma}_{1} } \lVert A \rVert .
    \]
    Now we finish the proof.  
\end{proof}

\subsection{Proof of Proposition~\ref{prop: estimate for smallest elements in U and V}} \label{app: proof of estimate for smallest elements in U and V} 

\begin{proof}
Let us take \( \wh{U} \) as an example. When \( \eta_{1} = 1 \), the result is trivial. Now suppose \( \eta_{1} > 1 \). Without loss of generality, assume \( \wh{U} \) takes the form 
\[
\wh{U} = \begin{bmatrix}
    \wh{c} & \wh{s} \\
   -\wh{s} & \wh{c} 
\end{bmatrix}, \dspace \wh{c}^{2} + \wh{s}^{2} = 1 .
\] 
Then 
\begin{align} \label{eq: U^T A11 U}
    \mymat{\wh{A}}{1}{1} = \wh{U}\trans \mymat{A}{1}{1} \wh{U} 
    = \begin{bmatrix}
    \mymat{a}{1}{1} - \wh{c}\wh{s} (\mymat{a}{1}{2} + \mymat{a}{2}{1}) & \wh{c}^{2}\mymat{a}{1}{2} - \wh{s}^{2}\mymat{a}{2}{1} \\
    \wh{c}^{2}\mymat{a}{2}{1} - \wh{s}^{2}\mymat{a}{1}{2} & \mymat{a}{1}{1} + \wh{c}\wh{s}(\mymat{a}{1}{2} + \mymat{a}{2}{1})
    \end{bmatrix} .
\end{align}
Thus \( \eta_{1} \ne 1 \) implies \( \wh{c}^{2} \ne \wh{s}^{2} \). By (\ref{eq: U^T A11 U}), it can be seen that 
\[
\mymat{\wh{a}}{1}{2} = \wh{c}^{2}\mymat{a}{1}{2} - \wh{s}^{2} \mymat{a}{2}{1}, \dspace  
\mymat{\wh{a}}{2}{1} = \wh{c}^{2}\mymat{a}{2}{1} - \wh{s}^{2} \mymat{a}{1}{2} .
\]
By some algebra, it yields 
 \[
\mymat{a}{1}{2} = \frac{ \wh{c}^{2}\mymat{\wh{a}}{1}{2} + \wh{s}^{2}\mymat{\wh{a}}{2}{1} } { \wh{c}^{2} - \wh{s}^{2} }, \dspace
\mymat{a}{2}{1} = \frac{ \wh{c}^{2}\mymat{\wh{a}}{2}{1} + \wh{s}^{2}\mymat{\wh{a}}{1}{2} } { \wh{c}^{2} - \wh{s}^{2} } .
\]
Since \( \mymat{a}{1}{2}\mymat{a}{2}{1} < 0 \), we have 
\[
( \wh{c}^{2}\mymat{\wh{a}}{1}{2} + \wh{s}^{2}\mymat{\wh{a}}{2}{1} ) ( \wh{c}^{2}\mymat{\wh{a}}{2}{1} + \wh{s}^{2}\mymat{\wh{a}}{1}{2} ) < 0 .
\]
Substituting \( \mymat{\wh{a}}{2}{1} = -\mymat{\wh{a}}{1}{2}\eta_{1} \) or \( \mymat{\wh{a}}{1}{2} = -\mymat{\wh{a}}{2}{1}\eta_{1} \) gives
\[
\lvert \wh{s} \rvert > \sqrt{\eta_{1}} \, \lvert \wh{c} \rvert \dspace \text{or} \dspace 
\lvert \wh{c} \rvert > \sqrt{\eta_{1}} \, \lvert \wh{s} \rvert .
\]
Combining with \( \wh{c}^{2} + \wh{s}^{2} = 1 \), we obtain the result. 
\end{proof}

\subsection{Proof of Proposition~\ref{prop: bound for diagonal entries of |T|}} \label{sec : proof of bound for diagonal entries of |T|}
\begin{proof} 
Since \( \trace( \mymat{A}{1}{1} ) = \trace( \mymat{\wh{A}}{1}{1} ) \) and \( \trace( \mymat{A}{2}{2} ) = \trace( \mymat{\wh{A}}{2}{2} ) \), we have 
\[
\lvert \mymat{a}{1}{1} - \mymat{a}{3}{3} \rvert 
= \frac{ \lvert ( \mymat{\wh{a}}{1}{1} + \mymat{\wh{a}}{2}{2} ) - ( \mymat{\wh{a}}{3}{3} + \mymat{\wh{a}}{4}{4} ) \rvert }{ 2 } 
\le \lvert \mymat{\wh{a}}{1}{1} - \mymat{\wh{a}}{3}{3} \rvert + \frac{ \lvert \mymat{\wh{a}}{1}{1} - \mymat{\wh{a}}{2}{2} \rvert }{ 2 } + \frac{ \lvert \mymat{\wh{a}}{3}{3} - \mymat{\wh{a}}{4}{4} \rvert }{ 2 } .
\]
For \( \lvert \mymat{\wh{a}}{1}{1} - \mymat{\wh{a}}{3}{3} \rvert \), recall (\ref{eq: componentwise equations for hatA12}) implies \( \mymat{\wh{a}}{1}{3} = ( \mymat{\wh{a}}{1}{1} - \mymat{\wh{a}}{3}{3} ) \wh{\sigma}_{1} \). Thus, 
\[
\frac{ \lvert \mymat{\wh{a}}{1}{1} - \mymat{\wh{a}}{3}{3} \rvert }{ \lVert A \rVert }  
\le ( 1 + \epsilon_{r} ) \frac{ \lvert \mymat{\wh{a}}{1}{1} - \mymat{\wh{a}}{3}{3} \rvert }{ \lvert \mymat{\wh{a}}{1}{3} \rvert }
= \frac{ 1 + \epsilon_{r} }{ \wh{\sigma}_{1} } .
\]
For \( \lvert \mymat{\wh{a}}{1}{1} - \mymat{\wh{a}}{2}{2} \rvert \), by (\ref{eq: U^T A11 U}) we have 
\begin{align*}
\lvert \mymat{\wh{a}}{1}{1} - \mymat{\wh{a}}{2}{2} \rvert 
= 2 \lvert \wh{c}\wh{s} \rvert \cdot \lvert \mymat{a}{1}{2} + \mymat{a}{2}{1} \rvert 
\le 2\delta_{1} \lvert \mymat{a}{1}{2} \rvert . 
\end{align*}
In a similar manner, it can be shown that  
\[
\lvert \mymat{ \wh{a}}{3}{3} - \mymat{\wh{a}}{4}{4} \rvert 
\le 2\delta_{2} \lvert \mymat{a}{4}{3} \rvert . 
\]
Then we complete the proof of this proposition. 
\end{proof}

\subsection{Proof of Proposition~\ref{prop: componentwise bounds of |T| for case 1}} \label{app: proof of componentwise bounds of |T| for case 1} 

\begin{proof}

    \textbf{Bounds for \( \lvert \mymat{a}{1}{2} \rvert \) and \( \lvert \mymat{a}{2}{1} \rvert \).} By Propositions~\ref{prop: estimate for tau} and~\ref{prop: relation of entries between A11 and hatA11}~(4), we have  
    \begin{align} \label{case 1: upper bounds of a12 and a21}
        \notag&\lvert \mymat{a}{2}{1} \rvert \le \lvert \mymat{\wh{a}}{2}{1} \rvert
        = \eta_{1}^{-1}  \lvert \mymat{\wh{a}}{1}{2} \rvert
        < \frac{ 2( 1 + \epsilon_{r} ) }{ \wh{\calK} } \lVert A \rVert , \\
        &\lvert \mymat{a}{1}{2} \rvert 
        \le \lvert \mymat{\wh{a}}{1}{2} \rvert + \lvert \mymat{\wh{a}}{2}{1} \rvert
        < \frac{ 2( 1 + \epsilon_{r} ) }{ \wh{\calK} } \lVert A \rVert \cdot ( \eta_{1} + 1 ) .
    \end{align}
    Moreover, \( 3\lvert \mymat{a}{2}{1} \rvert \le \lvert \mymat{a}{1}{2} \rvert \) is a direct consequence of Proposition~\ref{prop: relation of entries between A11 and hatA11}~(2) and assumption~(\ref{case 1: assumption 1}).

    \textbf{Bounds for \( \lvert \mymat{a}{3}{4} \rvert \) and \( \lvert \mymat{a}{4}{3} \rvert \).} 
    If \( \wh{\sigma}_{1} \lvert \mymat{\wh{a}}{3}{4} \rvert \le 2\wh{\sigma}_{2} \lvert \mymat{\wh{a}}{1}{2} \rvert \), then 
    \[
    \frac{ \lvert \mymat{a}{4}{3} \rvert }{ \lVert A \rVert } 
    \le \frac{ \lvert \mymat{\wh{a}}{3}{4} \rvert + \lvert \mymat{\wh{a}}{4}{3} \rvert }{ \lVert A \rVert } 
    \le 2 \frac{ \lvert \mymat{\wh{a}}{3}{4} \rvert }{ \lvert \mymat{\wh{a}}{1}{2} \rvert } 
    \le \frac{4}{ \wh{\kappa} } . 
    \]
    The first inequality is from Proposition~\ref{prop: relation of entries between A11 and hatA11}~(4).

If \( \wh{\sigma}_{1} \lvert \mymat{\wh{a}}{3}{4} \rvert > 2\wh{\sigma}_{2} \lvert \mymat{\wh{a}}{1}{2} \rvert \), then 
\[
\frac{ \lvert \mymat{\wh{a}}{3}{4} \rvert }{ \lvert \mymat{\wh{a}}{1}{4} \rvert }
= \frac{ \lvert \mymat{\wh{a}}{3}{4} \rvert }{ \lvert \wh{\sigma}_{2}\mymat{\wh{a}}{1}{2} - \wh{\sigma}_{1}\mymat{\wh{a}}{3}{4} \rvert } 
< \frac{2}{ \wh{\sigma}_{1} } .
\]
Thus, 
\[
\frac{ \lvert \mymat{a}{4}{3} \rvert }{ \lVert A \rVert } 
\le 2\frac{ \lvert \mymat{\wh{a}}{3}{4} \rvert }{ \lVert A \rVert } 
\le 2( 1 + \epsilon_{r} ) \frac{ \lvert \mymat{\wh{a}}{3}{4} \rvert }{ \lvert \mymat{\wh{a}}{1}{4} \rvert }
< \frac{ 4( 1 + \epsilon_{r} ) }{ \wh{\sigma}_{1} } .
\]
Hence in either case, we have
\begin{align*} 
    \lvert \mymat{a}{3}{4} \rvert \le \lvert \mymat{a}{4}{3} \rvert 
    < \frac{ 4( 1 + \epsilon_{r} ) }{ \wh{\calK} } \lVert A \rVert .
\end{align*} 
Then the relative bound for \( \lvert \mymat{a}{3}{4} \rvert \) and \( \lvert \mymat{a}{4}{3} \rvert \) in (\ref{case 1: relative bounds for entries of |T| -- 1}) follows naturally from assumption~(\ref{case 1: assumption 2}).  

\textbf{Bounds for \( \lvert \mymat{a}{1}{1} - \mymat{a}{3}{3} \rvert \).} By Propositions~\ref{prop: estimate for smallest elements in U and V} and~\ref{prop: bound for diagonal entries of |T|}, 
    \begin{align} \label{case 1: upper bound for |a11-a33|}
        \lvert \mymat{a}{1}{1} - \mymat{a}{3}{3} \rvert
        &\le \frac{ 1 + \epsilon_{r} }{ \wh{\sigma}_{1} } \lVert A \rVert + \delta_{1} \lvert \mymat{a}{1}{2} \rvert + \delta_{2} \lvert \mymat{a}{4}{3} \rvert \\
        \notag &\le \frac{ 1 + \epsilon_{r} }{ \wh{\sigma}_{1} } \lVert A \rVert + \delta_{1} \lvert \mymat{a}{1}{2} \rvert + \frac{ \sqrt{2} }{2} \lvert \mymat{a}{4}{3} \rvert \\
        \notag &< \frac{ ( 1 + 2\sqrt{2} ) ( 1 + \epsilon_{r} ) }{ \wh{\calK} } \lVert A \rVert + \delta_{1} \lvert \mymat{a}{1}{2} \rvert 
        \le \frac{3}{4} \lvert \mymat{a}{1}{2} \rvert . 
    \end{align}
    Here assumptions~(\ref{case 1: assumption 1}) and~(\ref{case 1: assumption 2}) are used in the last inequality. Moreover, by the basic inequality \( ( a + b )^{2} \le 2 ( a^{2} + b^{2} ) \),
    \begin{align*}
        \frac{ \lvert \mymat{a}{1}{1} - \mymat{a}{3}{3} \rvert^{2} }{ \lvert \mymat{a}{1}{2} \rvert }
        &\le \frac{2}{ \lvert \mymat{a}{1}{2} \rvert }\Big( \frac{ ( 1 + 2\sqrt{2} )^{2} ( 1 + \epsilon_{r} )^{2} }{ \wh{\calK}^{2} } \lVert A \rVert^{2} + \delta_{1}^{2} \lvert \mymat{a}{1}{2} \rvert^{2} \Big) \\
        &\le \frac{ ( 1 + 2\sqrt{2} ) ( 1 + \epsilon_{r} ) }{ 2\wh{\calK} } \lVert A \rVert + 2\delta_{1}^{2} \lvert \mymat{a}{1}{2} \rvert 
        \le \frac{ ( 9 + 2\sqrt{2} ) ( 1 + \epsilon_{r} ) }{     2\wh{\calK} } \lVert A \rVert . 
    \end{align*}
    Here assumption~(\ref{case 1: assumption 2}) is used in the second inequality, Proposition~\ref{prop: estimate for smallest elements in U and V} and (\ref{case 1: upper bounds of a12 and a21}) are used in the last inequality. 

    \textbf{Proof of (\ref{bound for phi(A) in case 1}).}
    If either (\ref{case 1: assumption 1}) or (\ref{case 1: assumption 2}) fails, then by (\ref{case 1: upper bounds of a12 and a21}), 
    \[
    \lvert \mymat{a}{1}{2} \rvert 
    \le \frac{ ( 4 + 8\sqrt{2} ) ( 1 + \epsilon_{r} ) }{ \wh{\calK} } \lVert A \rVert .
    \]
    In this case, by (\ref{case 1: upper bound for |a11-a33|}), 
    \begin{align*}
        \lvert \mymat{a}{1}{1} - \mymat{a}{3}{3} \rvert 
        &\le \frac{ 1 + \epsilon_{r} }{ \wh{\sigma}_{1} } \lVert A \rVert + \frac{ \sqrt{2} }{2} \cdot \frac{ ( 4 + 8\sqrt{2} ) ( 1 + \epsilon_{r} ) }{ \wh{\calK} } \lVert A \rVert + \frac{ \sqrt{2} }{2} \cdot \frac{ 4( 1 + \epsilon_{r} ) }{ \wh{\calK} } \lVert A \rVert \\
        &\le \frac{ ( 9 + 4\sqrt{2} ) ( 1 + \epsilon_{r} ) }{ \wh{\calK} } \lVert A \rVert .
    \end{align*} 
    Combining with the bound of \( \lvert \mymat{a}{4}{3} \rvert \), we obtain  
    \[
    \lVert T \rVert_{\infty} 
    = \lvert \mymat{a}{1}{2} \rvert + \lvert \mymat{a}{4}{3} \rvert + \lvert \mymat{a}{1}{1} - \mymat{a}{3}{3} \rvert
    \le \frac{ ( 17 + 12\sqrt{2} ) ( 1 + \epsilon_{r} ) }{ \wh{K} } \lVert A \rVert . 
    \]
    Mimicking (\ref{bound 0 for contribution of residual}), it can be shown that 
    \[
    \wh{\phi}( A ) 
    \le 32 \rho_{4} \gamma_{12} \lVert T \rVert_{\infty} \cdot \wh{\calK} 
    \le ( 2176 + 1536\sqrt{2} ) ( 1 + \epsilon_{r} ) \cdot \gamma_{12} \lVert A \rVert . \qedhere
    \]
\end{proof}

\subsection{Proof of Proposition~\ref{prop: componentwise bounds of |T| for case 2}} \label{app: proof of componentwise bounds of |T| for case 2} 

\begin{proof}
    Absolute bounds for \( \lvert \mymat{a}{1}{2} \rvert \) and \( \lvert \mymat{a}{2}{1} \rvert \) are identical to those in case~1. The relative bound for \( \lvert \mymat{a}{2}{1} \rvert \) is a direct consequence of Proposition~\ref{prop: relation of entries between A11 and hatA11}~(2) and assumption~(\ref{case 2: assumption 1}). 

    \textbf{Bounds for \( \lvert \mymat{a}{3}{4} \rvert \) and \( \lvert \mymat{a}{4}{3} \rvert \).} 
    If \( \wh{\sigma}_{1} \lvert \mymat{\wh{a}}{3}{4} \rvert \le 2 \wh{\sigma}_{2} \lvert \mymat{\wh{a}}{1}{2} \rvert \), then  
    \[
    \frac{ \lvert \mymat{a}{4}{3} \rvert }{ \lVert A \rVert } 
    \le \frac{ \lvert \mymat{\wh{a}}{3}{4} \rvert + \lvert \mymat{\wh{a}}{4}{3} \rvert }{ \lvert \mymat{\wh{a}}{1}{2} \rvert }
    = ( \eta_{2} + 1 ) \frac{ \lvert \mymat{\wh{a}}{3}{4} \rvert }{ \lvert \mymat{\wh{a}}{1}{2} \rvert } 
    \le \frac{ 2( \eta_{2} + 1 ) }{ \wh{\kappa} } . 
    \]
    
    If \( \wh{\sigma}_{1} \lvert \mymat{\wh{a}}{3}{4} \rvert > 2\wh{\sigma}_{2} \lvert \mymat{\wh{a}}{1}{2} \rvert \), then
    \[
    \lvert \mymat{\wh{a}}{1}{4} \rvert 
    = \lvert \wh{\sigma}_{2} \mymat{\wh{a}}{1}{2} - \wh{\sigma}_{1} \mymat{\wh{a}}{3}{4} \rvert 
    > \frac{ \wh{\sigma}_{1} }{2} \lvert \mymat{\wh{a}}{3}{4} \rvert 
    = \frac{ \wh{\sigma}_{1} }{ 2\eta_{2} } \lvert \mymat{\wh{a}}{4}{3} \rvert .
    \]
    Arguing as in Appendix~\ref{app: proof of componentwise bounds of |T| for case 1}, we have
    \[
    \frac{ \lvert \mymat{a}{4}{3} \rvert }{ \lVert A \rVert } 
    \le ( 1 + \epsilon_{r} ) \frac{ \lvert \mymat{\wh{a}}{3}{4} \rvert + \lvert \mymat{\wh{a}}{4}{3} \rvert }{ \lvert \mymat{\wh{a}}{1}{4} \rvert } 
    < \frac{ 2( 1 + \epsilon_{r} ) }{ \wh{\sigma}_{1} } \cdot ( \eta_{2} + 1 ) .
    \]
    Thus in either case, we have 
    \begin{align*} 
        \lvert \mymat{a}{4}{3} \rvert 
        &\le \frac{ 2( 1 + \epsilon_{r} ) }{ \wh{\calK} } \lVert A \rVert \cdot ( \eta_{2} + 1 ), \\
        \lvert \mymat{a}{3}{4} \rvert 
        &\le \lvert \mymat{\wh{a}}{3}{4} \rvert 
        \le \frac{ 2( 1 + \epsilon_{r} ) }{ \wh{\calK} } \lVert A \rVert .
    \end{align*}

    To prove the relative bound for \( \lvert \mymat{a}{4}{3} \rvert \), we need to exploit assumption~(\ref{case 2: assumption 2}). By (\ref{case 2: bound for |a43|}) and assumption~(\ref{case 2: assumption 2}), 
    \[
    \frac{ \lvert \mymat{a}{4}{3} \rvert }{ \lvert \mymat{a}{1}{2} \rvert } 
    \le \frac{ \eta_{2} + 1 }{ 8 } . 
    \]
    On the other hand, by (\ref{order realation between off-diagonal entries}) and  Proposition~\ref{prop: relation of entries between A11 and hatA11}~(4), 
    \[
    \frac{ \lvert \mymat{a}{4}{3} \rvert }{ \lvert \mymat{a}{1}{2} \rvert } 
    \le \frac{ \lvert \mymat{\wh{a}}{1}{2} \rvert }{ ( 1 + \eta_{2}^{-1} ) \lvert \mymat{\wh{a}}{4}{3} \rvert } 
    \le \frac{ \eta_{2} + 1 }{ \eta_{2} } . 
    \] 
    Comparing two bounds, we obtain 
    \[
    \frac{ \lvert \mymat{a}{4}{3} \rvert }{ \lvert \mymat{a}{1}{2} \rvert } 
    \le \Big( \frac{ \eta_{2} + 1 }{ \eta_{2} } \wedge \frac{ \eta_{2} + 1 }{8} \Big) 
    \le \frac{9}{8} . 
    \]

    \textbf{Bounds for \( \lvert a_{11} - a_{33} \rvert \).} 
    By Proposition~\ref{prop: bound for diagonal entries of |T|}, we have 
    \begin{align*} 
        \lvert \mymat{a}{1}{1} - \mymat{a}{3}{3} \rvert 
        &\le \frac{ 1 + \epsilon_{r} }{ \wh{\sigma}_{1} } \lVert A \rVert + \delta_{1} \lvert \mymat{a}{1}{2} \rvert + \delta_{2} \lvert \mymat{a}{4}{3} \rvert \\ 
        &\le \frac{1}{16} \lvert \mymat{a}{1}{2} \rvert + \frac{1}{3} \lvert \mymat{a}{1}{2} \rvert + \frac{1}{ \sqrt{1 + \eta_{2}} } \cdot \Big( \frac{ \eta_{2} + 1 }{ \eta_{2} } \wedge \frac{ \eta_{2} + 1 }{8} \Big) \lvert \mymat{a}{1}{2} \rvert \\  
        &\le \frac{1}{16} \lvert \mymat{a}{1}{2} \rvert + \frac{1}{3} \lvert \mymat{a}{1}{2} \rvert + \frac{3}{8} \lvert \mymat{a}{1}{2} \rvert  
        = \frac{37}{48} \lvert \mymat{a}{1}{2} \rvert. 
    \end{align*} 
    Moreover, by \( ( a + b + c )^{2} \le 3( a^{2} + b^{2} + c^{2} ) \) we have
        \begin{align*} 
            \frac{ ( \mymat{a}{1}{1} - \mymat{a}{3}{3} )^{2} }{ \lvert \mymat{a}{1}{2} \rvert }
            &\le \frac{ 3 } { \lvert \mymat{a}{1}{2} \rvert } \Big( \frac{ ( 1 + \epsilon_{r} )^{2} }{ \wh{\sigma}_{1}^{2} } \lVert A \rVert^{2} + \delta_{1}^{2} \lvert \mymat{a}{1}{2} \rvert^{2} + \delta_{2}^{2} \lvert \mymat{a}{4}{3} \rvert^{2} \Big) \\ 
            &\le \frac{ 3( 1 + \epsilon_{r} ) }{ 16\wh{\calK} } \lVert A \rVert + 3 \delta_{1}^{2} \lvert \mymat{a}{1}{2} \rvert +  \frac{27}{8} \delta_{2}^{2} \lvert \mymat{a}{4}{3} \rvert \\
            &\le \Big( \frac{ 3( 1 + \epsilon_{r} ) }{ 16\wh{\calK} } + \frac{ 6( 1 + \epsilon_{r} ) }{ \wh{\calK} } + \frac{ 27( 1 + \epsilon_{r} ) }{ 4\wh{\calK} } \Big) \lVert A \rVert 
            =  \frac{ 207( 1 + \epsilon_{r} ) }{ 16\wh{\calK} } \lVert A \rVert .
    \end{align*} 
    Here assumption~(\ref{case 2: assumption 2}) is used in the second inequality.  

    \textbf{Proof of (\ref{bound for phi(A) in case 2}).} If either~(\ref{case 2: assumption 1}) or~(\ref{case 2: assumption 2}) fails, we have 
    \begin{align*}
        &\lvert \mymat{a}{1}{2} \rvert 
        \le \frac{ 16( 1 + \epsilon_{r} ) }{ \wh{\calK} } \lVert A \rVert, \dspace 
        \lvert \mymat{a}{4}{3} \rvert 
        \le 2 \lvert \mymat{a}{1}{2} \rvert 
        \le \frac{ 32( 1 + \epsilon_{r} ) }{ \wh{\calK} } \lVert A \rVert, \\  
        &\lvert \mymat{a}{1}{1} - \mymat{a}{3}{3} \rvert 
        \le \frac{ 1 + \epsilon_{r} }{ \wh{\sigma}_{1} } \lVert A \rVert + \frac{ \sqrt{2} }{2} \lvert \mymat{a}{1}{2} \rvert + \dfrac{ \sqrt{2} }{2} \lvert \mymat{a}{4}{3} \rvert 
        \le \frac{ ( 1 + 24\sqrt{2} ) ( 1 + \epsilon_{r} ) }{ \wh{\calK} } \lVert A \rVert .
    \end{align*}
    Then 
    \[
    \lVert T \rVert_{\infty} 
    \le  \frac{ ( 49 + 24\sqrt{2} ) ( 1 + \epsilon_{r} ) }{ \wh{\calK} } \lVert A \rVert . 
    \]
    Mimicking (\ref{bound 0 for contribution of residual}), it can be shown that 
    \[
    \wh{\phi}( A ) 
    \le 32 \rho_{4} \gamma_{12} \lVert T \rVert_{\infty} \cdot \wh{\calK} 
    \le ( 6272 + 3072\sqrt{2} ) ( 1 + \epsilon_{r} ) \cdot \gamma_{12} \lVert A \rVert .  \qedhere
    \] 
\end{proof}

\end{appendices}

\end{document}